\documentclass[12pt,a4paper,intlimits,sumlimits]{amsart}

\usepackage{amsmath, amsthm, mathtools}
\usepackage{amsfonts}
\usepackage[alphabetic,nobysame]{amsrefs}
\usepackage{graphicx}
\usepackage{framed}
\usepackage{amssymb}
\usepackage{esvect}
\usepackage{xcolor}
\usepackage{eucal}

\usepackage{hyperref}
\usepackage[english]{babel}

\def \N {\mathbb{N}}
\def \R {\mathbb{R}}

\newcommand{\tr}{\operatorname{tr}}
\newcommand{\supp}{\operatorname{supp}}

\newcommand{\sech}{\operatorname{sech}}
\newcommand{\PV}{\operatorname{PV}}

\newtheorem{theorem}{Theorem}[section]
\newtheorem{proposition}[theorem]{Proposition}
\newtheorem{lemma}[theorem]{Lemma}
\newtheorem{corollary}[theorem]{Corollary}
\newtheorem{openpb}[theorem]{Open Problem}

\theoremstyle{definition}

\numberwithin{equation}{section}

\usepackage{geometry}
\renewcommand{\epsilon}{\varepsilon}
\newcommand{\e}{\varepsilon}

\renewcommand{\leq}{\leqslant}
\renewcommand{\le}{\leqslant}

\renewcommand{\ge}{\geqslant}

\allowdisplaybreaks

\title[Kernel representations with variable coefficients]{Kernel representations \\ of fractional gradient bilinear forms.\\
Part II. Variable coefficients}

\author{Serena Dipierro, Caterina Sportelli, and Enrico Valdinoci}

\address{Serena Dipierro: Department of Mathematics and Statistics, The University of Western Australia, 35 Stirling Highway, Crawley, Perth, WA 6009, Australia}
\email{serena.dipierro@uwa.edu.au}

\address{Caterina Sportelli: Departamento de Análisis Matemático, Universidad de Granada, 18071 Granada, Spain  \& Department of Mathematics and Statistics, The University of Western Australia, 35 Stirling Highway, Crawley, Perth, WA 6009, Australia}
\email{caterina.sportelli@uwa.edu.au, caterina.sp@ugr.es}

\address{Enrico Valdinoci: Department of Mathematics and Statistics, The University of Western Australia, 35 Stirling Highway, Crawley, Perth, WA 6009, Australia}
\email{enrico.valdinoci@uwa.edu.au}

 \date{}

\begin{document}

\begin{abstract}
This paper investigates the existence and properties of interaction kernels representing nonlocal energies associated with the Riesz fractional gradient and variable, uniformly elliptic matrix fields $A(x)$. Building on our previous analysis of constant matrices in \cite{DSVPART1}, we establish necessary conditions on the singularity and decay of such kernels, derive an explicit representation formula under suitable regularity assumptions, and prove uniqueness
of the kernel representation.

We also obtain sharp asymptotics and positivity criteria near the diagonal, averaged decay estimates at infinity, and characterize the divergence free property of $A(x)$ through the decay of the kernel.

Finally, we show that the associated energy may fail the truncation property and that arbitrarily small smooth perturbations of constant matrices can destroy the nonnegativity of the representing kernel.

These results provide negative answers to both~\cite[Open Problems~1.9 and~1.10]{MR3819528}.
\end{abstract}
 
\maketitle

\tableofcontents

\section{Introduction}

In the classical theory of elliptic equations, divergence form operators of the type
\[
-\mbox{div}(A(x)\nabla u),
\]
where $A:\R^N\to\R^{N\times N}$ is a matrix (or, more precisely, a matrix-valued function), provide a natural framework for describing anisotropy and spatial inhomogeneity. A fractional counterpart of this construction can be obtained by replacing the classical gradient with the Riesz fractional gradient. For $s\in(0,1)$ and $u\in C_c^\infty(\R^N)$, the latter is defined by
\[
D^s u(z):=d_{N,s}\int_{\R^N}(u(z)-u(x))\frac{z-x}{|z-x|^{N+s+1}} dx,
\]
where
\begin{equation}\label{DNSDEFN}
d_{N,s}:=\frac{2^s\Gamma\left(\frac{N+s+1}{2}\right)}{\pi^{N/2}\Gamma\left(\frac{1-s}{2}\right)}
\end{equation}
(we refer to \cite[Section 1.5]{GENTLE} and the references therein for further background on fractional gradients).

The Riesz fractional gradient has been extensively studied as a vector-valued counterpart of the classical gradient, particularly in connection with fractional divergence operators and weak formulations. This framework has proved useful in the study of functions of bounded variation, the development of fractional Sobolev, Hardy, Trudinger and Morrey inequalities, and the analysis of $\Gamma$-convergence (see e.g. \cite{MR3403430, MR3420498, MR3615452, MR3714833, MR3819528, MR4001075, MR4048032, MR4104831, MR4179861}). 
In a related direction, \cite{MR5007887} constructs nonlocal operators through suitable interaction kernels, recovering classical elliptic operators in divergence form in the limit $s\nearrow1$. This provides another connection between nonlocal kernels and elliptic operators with matrix-valued coefficients.

Despite these developments, the representation of energies involving variable matrix fields through suitable interaction kernels remains a delicate question. Indeed, this construction leads naturally to energies of the form
\[
\int_{\R^N}A(x)D^su(x)\cdot D^su(x) dx,
\]
whose positivity and coercivity follow from the uniform ellipticity of $A$.  However, the uniform ellipticity alone does not guarantee, in general, that this energy can be represented in terms of pairwise differences $u(x)-u(y)$ through a nonnegative symmetric kernel.

For the fractional Laplacian, such interactions are described by a positive symmetric kernel proportional to $|x-y|^{-N-2s}$ through the constant
\begin{equation}\label{,mknjbvcxzdxcfvbn}
c_{N,s}=\frac{2^{2s}s\Gamma\left(\frac{N+2s}{2}\right)}{\pi^{N/2}\Gamma(1-s)}.
\end{equation}
This representation plays a fundamental role in the analysis of nonlocal equations, as it makes the interactions explicit and allows the use of techniques that rely on the positivity and decay of the kernel. For general matrices with varible coefficients, it is not clear whether an analog representation exists. Indeed, the positivity of the energy does not, by itself, guarantee positivity of the underlying interaction kernel.

The present paper is the second part of our investigation of this question. In \cite{DSVPART1}, we considered the case of constant matrices, deriving an explicit formula for the associated kernel and studying how its sign depends on the constant matrix $A$. Here we turn to the variable coefficients case, which introduces new difficulties, both in identifying the kernel and in understanding its behavior near the diagonal and at infinity. Our starting point is the following question, raised in \cite[Open Problem 1.10]{MR3819528}:

\begin{openpb}\label{opeSPJMDPKM}
Let $s\in(0,1)$. Let $A:\R^N\to\R^{N\times N}$ be bounded and measurable. Assume that~$A$ is uniformly elliptic, namely that there exist constants $0<\lambda_A\le\Lambda_A<+\infty$ such that, for every $x, \xi\in\R^N$,
\begin{equation}\label{uniformlyellipticdefn}
\lambda_A |\xi|^2\le A(x)\xi\cdot\xi\le\Lambda_A |\xi|^2.
\end{equation}
Can one find a nonnegative kernel $k_A$ such that
\begin{equation}\label{lòkgyftdresdetfyguhijok}
k_A(x,y)=k_A(y,x),
\end{equation}
and, for some $\alpha\in(0,2)$, $\eta>0$, $0<\lambda\le\Lambda<+\infty$, and~$M>0$,
\begin{alignat}{2}\label{srdtfyguioplèòlkoijhugyftr1}
&\lambda\le k_A(x,y)|x-y|^{N+\alpha}\le\Lambda,\quad&&\mbox{ for } |x-y|\le 1,\\ \label{srdtfyguioplèòlkoijhugyftr2}
\mbox{ and }\quad &k_A(x,y)\le M|x-y|^{-N-\eta},\quad&&\mbox{ for } |x-y|>1,
\end{alignat}
such that
\begin{equation}\label{mnbhgvcfdxszxdfcgvhb}
\int_{\R^N} A(x)D^s u(x)\cdot D^s v(x)\,dx = \iint_{\R^{2N}}(u(x)-u(y))(v(x)-v(y))k_A(x,y)\,dx\,dy
\end{equation}
for every $u,v\in C_c^\infty(\R^N)$?
\end{openpb}

We show that the answer to this question is negative in general. Our analysis also provides a detailed description of the properties of representing kernels,  starting with the restrictions on their singularity near the diagonal and their decay at infinity.

\subsection{Singularity and decay of the kernel}
Here we provide necessary conditions that can be obtained without an explicit representation of the kernel $k_A(x, y)$. In particular, we determine the only possible order of its singularity at the diagonal and we derive restrictions on its decay at infinity. These results show that the order $2s$ plays a distinguished role, both in the local behavior of the kernel and in its decay at infinity.

Throughout the paper we will use the notation
\[
\Delta:=\{(x,x):x\in\R^N\}.
\]

To begin with,  we show that the singularity exponent in \eqref{srdtfyguioplèòlkoijhugyftr1} is necessarily determined by the order of the fractional gradient. More precisely,  we have the following:

\begin{theorem}\label{0-pkr32pvb8mn7Jhb398fklsOdDTH}
Let $s\in(0,1)$ and let $A:\R^N\to\R^{N\times N}$ be bounded and measurable and satisfy~\eqref{uniformlyellipticdefn}.

Let $k:\R^{2N}\setminus\Delta\to\R$ be measurable and satisfy~\eqref{srdtfyguioplèòlkoijhugyftr1} and
\begin{equation}\label{kmljnbhgvcfdxcfvgbh}
|k(x,y)|\le M|x-y|^{-N-\eta}, \quad\mbox{ for } |x-y|>1,
\end{equation}
for some $\alpha\in(0,2)$, $\eta>0$, $0<\lambda\le\Lambda<+\infty$ and $M>0$.

In addition, assume that, for all $u\in C^\infty_c(\R^N)$,
\begin{equation}\label{OJS02irkt-3}
\int_{\R^N} A(x)D^s u(x)\cdot D^s u(x) \,dx = \iint_{\R^{2N}}(u(x)-u(y))^2\,k(x,y)\,dx\,dy.
\end{equation}

Then, $\alpha=2s$.
\end{theorem}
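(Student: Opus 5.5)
Since $A$ is only bounded and measurable, the plan is to avoid any explicit kernel formula and compare the two sides of \eqref{OJS02irkt-3} on rescaled test functions. Fix a nontrivial $u\in C^\infty_c(\R^N)$, say supported in $B_1$, and set $u_r(x):=u(x/r)$ with $r\in(0,1)$ small. I will compute how each side of \eqref{OJS02irkt-3} scales in $r$. Comparing the leading powers of $r$ will force $\alpha=2s$.

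\textbf{Left-hand side.} The Riesz gradient is homogeneous: $D^s u_r(x)=r^{-s}(D^s u)(x/r)$. By \eqref{uniformlyellipticdefn},
\[
\lambda_A\, r^{N-2s}\int_{\R^N}|D^s u|^2\le \int_{\R^N} A\,D^s u_r\cdot D^s u_r\,dx\le \Lambda_A\, r^{N-2s}\int_{\R^N}|D^s u|^2 .
\]
The quantity $\int|D^su|^2=\int|\xi|^{2s}|\hat u|^2$ (up to a normalization) is positive and finite. Hence the left-hand side is comparable to $r^{N-2s}$.

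\textbf{Right-hand side.} Split it into the regions $|x-y|\le1$ and $|x-y|>1$.

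Far region $|x-y|>1$. Expand $(u_r(x)-u_r(y))^2\le 2u_r(x)^2+2u_r(y)^2$ and use \eqref{kmljnbhgvcfdxcfvgbh}. This gives a contribution bounded in absolute value by
\[
C M\|u_r\|_{L^2}^2\int_{|h|>1}|h|^{-N-\eta}\,dh\le C r^N .
\]
Since $N<N-2s+2$ and $N>N-2s$, this term is of lower order compared with $r^{N-2s}$ on the left, and it will not matter.

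Near region $|x-y|\le1$. Here $k$ is positive and comparable to $|x-y|^{-N-\alpha}$, so the contribution is comparable to
\[
I(r):=\iint_{|x-y|\le1}\frac{(u_r(x)-u_r(y))^2}{|x-y|^{N+\alpha}}\,dx\,dy .
\]
Change variables $x=rX$, $y=rY$:
\[
I(r)=r^{N-\alpha}\iint_{|X-Y|\le 1/r}\frac{(u(X)-u(Y))^2}{|X-Y|^{N+\alpha}}\,dX\,dY .
\]
As $r\to0$ the last integral increases to the Gagliardo seminorm $[u]^2_{W^{\alpha/2,2}}$. This seminorm is finite because $\alpha<2$ and $u$ is smooth with compact support, and positive because $u\neq0$. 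So the near region is comparable to $r^{N-\alpha}$ for small $r$.

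\textbf{Conclusion.} Putting the pieces together,
\[
c\,r^{N-2s}\le \lambda\, c' r^{N-\alpha}+Cr^N \quad\text{and}\quad C'r^{N-2s}\ge \Lambda^{-1}\ldots
\]
More precisely: $r^{N-2s}\asymp r^{N-\alpha}+O(r^N)$ as $r\to0$, with constants independent of $r$. Both $r^{N-2s}$ and $r^{N-\alpha}$ dominate $r^N$, since $s,\alpha>0$. Hence $r^{N-2s}\asymp r^{N-\alpha}$, i.e. $r^{\alpha-2s}$ stays bounded above and below as $r\to0$. This forces $\alpha=2s$.

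\textbf{Main obstacle.} The step needing care is the near region. One must check that the truncation $|x-y|\le1$ produces the correct power with a constant bounded away from $0$ and $\infty$; monotone convergence handles this. One must also check that the lower and upper bounds on $k$ are used in the right direction for each inequality. The lower bound on $k$ with the upper bound on the energy gives $r^{N-\alpha}\lesssim r^{N-2s}$, hence $\alpha\le2s$. The upper bound on $k$ with the lower bound on the energy gives $r^{N-2s}\lesssim r^{N-\alpha}+r^N$, hence $\alpha\ge2s$. The far region enters with an unknown sign, but its size $O(r^N)$ is negligible in both inequalities.
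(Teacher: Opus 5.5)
Your argument is correct, and it takes a different and simpler route than the paper.

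\textbf{The paper's route.} The paper tests \eqref{OJS02irkt-3} on modulated functions $\phi(x)\cos(\Gamma\omega\cdot x)$ with $\Gamma\to+\infty$. This requires three ingredients:
\begin{enumerate}
\item the Fourier-analytic remainder estimate of Lemma~\ref{PHILA}, to show that the energy is comparable to $\Gamma^{2s}$;
\item a Riemann--Lebesgue argument, to get two-sided bounds on $\int\phi^2\sin^2(\Gamma\omega\cdot x+\cdot)$;
\item a splitting of $\phi_\Gamma(x)-\phi_\Gamma(x+h)$ into an envelope part and an oscillating part, to show that the near-diagonal Gagliardo energy is comparable to $\Gamma^{\alpha}$.
\end{enumerate}

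\textbf{Your route.} Your pure dilation $u(x/r)$, $r\searrow0$, uses the exact $s$-homogeneity of $D^s$. This is the same device the paper itself uses, with $R\to+\infty$, for the tail bound $\eta\le2s$. You combine it with the exact scaling of the truncated Gagliardo seminorm. Both sides then become explicit powers of $r$, and monotone convergence gives the two-sided bound with no remainder lemma at all. The far field is $O(r^N)$, negligible against $r^{N-2s}$ and $r^{N-\alpha}$. This mirrors how the paper's $O(1)$ far-field term is negligible against $\Gamma^{2s}$ and $\Gamma^{\alpha}$.

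\textbf{Comparison.} The paper's construction keeps the support of the test function fixed and probes a single frequency direction $\omega$. That is more information than this statement needs. Yours is shorter and fully elementary.

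\textbf{A small slip.} In your ``Conclusion'' display, the first inequality should carry $\Lambda$ (the upper bound on $k$), not $\lambda$, and the second inequality is left unfinished. The clean pair is
\[
\lambda_A\|D^su\|_{L^2}^2\,r^{N-2s}\le \Lambda\,[u]^2_{W^{\alpha/2,2}}\,r^{N-\alpha}+Cr^N,
\qquad
\lambda\,J_1\,r^{N-\alpha}-Cr^N\le \Lambda_A\|D^su\|_{L^2}^2\,r^{N-2s},
\]
where
\[
J_1:=\iint_{\{|X-Y|\le1\}}\frac{(u(X)-u(Y))^2}{|X-Y|^{N+\alpha}}\,dX\,dY>0 .
\]
By monotonicity, $J_1$ bounds the truncated seminorm from below for every $r\in(0,1)$. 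Your ``Main obstacle'' paragraph already uses these inequalities in the right directions.
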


We stress that the conclusion $\alpha=2s$ in Theorem~\ref{0-pkr32pvb8mn7Jhb398fklsOdDTH} follows under assumptions weaker than those required in Open problem~\ref{opeSPJMDPKM}. Indeed, our proof neither uses the symmetry condition~\eqref{lòkgyftdresdetfyguhijok}, nor requires the kernel to be nonnegative on the whole of~$\R^{2N}\setminus\Delta$.
The positivity of the kernel is only needed near the diagonal, and it is already contained in the lower bound in~\eqref{srdtfyguioplèòlkoijhugyftr1}.

We next prove that the exponent $\eta$ appearing in~\eqref{srdtfyguioplèòlkoijhugyftr2} cannot be taken strictly larger than~$2s$.

\begin{theorem}\label{sxdfcghvbjknlmò,à.,lmknjbhvgcfdxszxfcgvbhn}
Let $s\in(0,1)$ and let $A:\R^N\to\R^{N\times N}$ be bounded and measurable and satisfy~\eqref{uniformlyellipticdefn}.

Let $k:\R^{2N}\setminus\Delta\to\R$ be measurable and satisfy
\begin{equation}\label{mlnjkbhgvcfdxszexdcfgvhbjnkml,}
k(x,y)\le \Lambda |x-y|^{-N-2s}, \quad \mbox{ for } 0<|x-y|\le1,
\end{equation}
and \begin{equation}\label{kmljnbhgvcfdxcfvgbhBIS}
k(x,y)\le M|x-y|^{-N-\eta}, \quad\mbox{ for } |x-y|>1,
\end{equation}
for some $\eta>0$, $\Lambda>0$ and $M>0$. 

In addition, assume~\eqref{OJS02irkt-3}. 

Then, $0<\eta\le2s$.
\end{theorem}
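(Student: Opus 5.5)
Argue by contradiction: assume $\eta>2s$ and test \eqref{OJS02irkt-3} against dilations $u_R(x):=u(x/R)$ of a fixed nonnegative $u\in C^\infty_c(\R^N)$, $u\not\equiv0$, letting $R\to+\infty$. The left-hand side has an exact scaling, while the upper bounds on $k$ make the right-hand side grow too slowly.

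First, the left-hand side. Since $D^s u_R(x)=R^{-s}(D^s u)(x/R)$, the change of variables $x=Ry$ and the lower bound in \eqref{uniformlyellipticdefn} give
\[
\int_{\R^N} A(x)D^s u_R(x)\cdot D^s u_R(x)\,dx\ge \lambda_A R^{N-2s}\int_{\R^N}|D^s u|^2\,dx = c\,\lambda_A\,R^{N-2s}.
\]
Here $c=\int|D^su|^2>0$, because $D^s u\not\equiv 0$ (its Fourier transform is $i\xi|\xi|^{s-1}\hat u$ up to constants).

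Next, the right-hand side. The kernel may be sign-changing, but $(u_R(x)-u_R(y))^2\ge0$, so only the upper bounds on $k$ are needed:
\[
\iint (u_R(x)-u_R(y))^2k\,dx\,dy\le \iint (u_R(x)-u_R(y))^2 k^+\,dx\,dy .
\]
Split this into the regions $|x-y|\le1$ and $|x-y|>1$.

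On $|x-y|\le1$, use \eqref{mlnjkbhgvcfdxszexdcfgvhb} and $|u_R(x)-u_R(y)|\le \|\nabla u\|_\infty R^{-1}|x-y|$. The difference is nonzero only if $x$ or $y$ lies in $\supp u_R$, which has measure $\sim R^N$, so this region contributes at most $C R^{N-2}\int_{|h|\le1}|h|^{2-N-2s}dh = CR^{N-2}$.

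On $|x-y|>1$, use \eqref{kmljnbhgvcfdxcfvgbhBIS} and split again according to $|h|=|x-y|$:
\begin{itemize}
\item for $1<|h|\le R$, the same Lipschitz estimate gives at most $CR^{N-2}\int_1^R r^{1-\eta}\,dr$;
\item for $|h|>R$, the bound $(u_R(x)-u_R(y))^2\le 4\|u\|_\infty^2$ gives at most $CR^N\int_R^\infty r^{-1-\eta}\,dr\le CR^{N-\eta}$.
\end{itemize}
The middle term is $O(R^{N-2}\max\{1,\log R,R^{2-\eta}\})$. Collecting everything, the right-hand side is $O(R^{N-2}\log R+R^{N-\eta})$, which is $o(R^{N-2s})$ since $s<1$ and $\eta>2s$.

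This contradicts the lower bound $c\lambda_A R^{N-2s}$ for large $R$, so $\eta\le2s$. Positivity of $\eta$ is part of the hypothesis.

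The main care needed is in the region bookkeeping, where one uses that the difference $u_R(x)-u_R(y)$ vanishes unless one of the two points lies in the support of $u_R$. This justifies the $R^N$ measure factor and avoids any integrability issue at infinity. Since the argument relies only on upper bounds for $k$, no symmetry of $k$ and no lower bounds on it are needed, consistent with the hypotheses of the statement.
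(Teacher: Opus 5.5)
Your proposal is correct and follows essentially the same route as the paper. Both arguments test the energy identity with the dilations $\phi(x/R)$ and get the lower bound $\lambda_A R^{N-2s}\|D^s\phi\|_{L^2(\R^N)}^2$ from ellipticity and $s$-homogeneity. Both then bound the kernel side by $O(R^{N-2}(1+\log R)+R^{N-\eta})$ using only the upper bounds on $k$, splitting the far region at $|x-y|=R$, which is the paper's split at $|z|=1$ after setting $z=h/R$. The only cosmetic difference is that the paper packages your pointwise Lipschitz and support-measure bookkeeping into the translation estimate $\int_{\R^N}|\phi(x+z)-\phi(x)|^2\,dx\le C_\phi\min\{|z|^2,1\}$.
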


We point out that, as in Theorem \ref{0-pkr32pvb8mn7Jhb398fklsOdDTH},
also in Theorem~\ref{sxdfcghvbjknlmò,à.,lmknjbhvgcfdxszxfcgvbhn} neither the symmetry assumption~\eqref{lòkgyftdresdetfyguhijok} nor the global nonnegativity of the kernel is needed.
Moreover, the assumptions required in Theorem~\ref{sxdfcghvbjknlmò,à.,lmknjbhvgcfdxszxfcgvbhn} are weaker than
those used in Theorem \ref{0-pkr32pvb8mn7Jhb398fklsOdDTH}. Indeed, in Theorem \ref{0-pkr32pvb8mn7Jhb398fklsOdDTH}, the lower bound in \eqref{srdtfyguioplèòlkoijhugyftr1} is essential, since it provides a positive contribution which prevents cancellations. In contrast,
Theorem \ref{sxdfcghvbjknlmò,à.,lmknjbhvgcfdxszxfcgvbhn} only requires an upper bound on the kernel, since the lower bound of the relevant energy is provided by the uniform ellipticity of the matrix $A(x)$.

Theorem \ref{sxdfcghvbjknlmò,à.,lmknjbhvgcfdxszxfcgvbhn} shows that, if the kernel at infinity is controlled from above by a quantity proportional to $|x-y|^{-N-\eta}$, then necessarily $\eta\le 2s$. We will also prove the complementary statement, namely if, for $|x-y|>1$, the kernel is bounded from below by a positive multiple of $|x-y|^{-N-\gamma}$, then necessarily $\gamma\ge 2s$ (see the forthcoming Theorem \ref{NONNETH}).
Accordingly, if the kernel is bounded from above and below by positive multiples of the same power of $|x-y|$, then that exponent must be exactly~$2s$ (see the forthcoming Corollaries~\ref{cor1tails} and~\ref{cor2tails}).

\subsection{An explicit representation formula}
The results discussed above provide necessary information on the singularity and the decay of any kernel representing the quadratic form \eqref{OJS02irkt-3}, but do not give an explicit expression for this kernel.  Thus, we next turn to the identification of the kernel in terms of the matrix $A(x)$. 

We first work only assuming that $A$ is bounded, measurable and symmetric. Thus, given~$\varepsilon>0$, we define the truncated kernel
\begin{equation}\label{kerneltruncated}
k_{A,\varepsilon}(x,y):=-\frac{d_{N,s}^2}{2}\int_{\R^N\setminus(B_\varepsilon(x)\cup B_\varepsilon(y)) } \frac{(z-x)\cdot A(z)(z-y)}{|z-x|^{N+s+1}|z-y|^{N+s+1}} \,dz.
\end{equation}
We will show that,  if a symmetric kernel $k_A(x, y)$ represents the bilinear form associated with $A(x)$, then the truncated kernels $k_{A,\varepsilon}(x, y)$ recover $k_A(x, y)$ at the level of the corresponding bilinear forms (see the forthcoming Theorem \ref{thmA(x)identification}).

We stress that, without imposing any continuity assumption on $A(x)$, the kernel is identified in a weak sense through the family $k_{A,\varepsilon}(x, y)$. However, this result does not yet provide a pointwise formula for the kernel $k_A(x, y)$. 
To obtain such a representation, we impose an additional local regularity assumption on $A(x)$. For every compact set $K\subset\R^N$, we
set
\begin{equation}\label{omegaAkdefn}
\omega_{A,K}(r):=\sup_{\substack{z,w\in K\\ |z-w|\le r}}|A(z)-A(w)|
\end{equation}
and we assume that $A$ satisfies the uniform fractional Dini condition
\begin{equation}\label{mkojihguyftdreswxrdcftvgybhjn}
\int_0^1 \frac{\omega_{A, K}(r)}{r^{1+s}} \,dr <+\infty
\end{equation}
for every compact set $K\subset\R^N$. We note that this condition implies continuity.

Under assumption \eqref{mkojihguyftdreswxrdcftvgybhjn},  we prove that the singularities at $z=x$ and $z=y$ can be controlled separately.  More precisely,  in the forthcoming Lemma~\ref{lemmareg1}, we show that there exists the limit
\begin{equation}\label{INT2PVdefn}
I_A(x,y):=\lim_{\varepsilon\searrow0}\int_{\R^N\setminus\left(B_\varepsilon(x)\cup B_\varepsilon(y)\right)}
\frac{(z-x)\cdot A(z)(z-y)}{|z-x|^{N+s+1}|z-y|^{N+s+1}} \,dz
\end{equation}
and it is finite. This  allows us to pass from the identification at the level of bilinear forms to an almost everywhere pointwise identification of the kernel. More precisely, we have the following:

\begin{theorem}\label{thmreg2}
Let~$A:\R^N\to \R^{N\times N}$ satisfy \eqref{mkojihguyftdreswxrdcftvgybhjn} for every compact set~$K\subset\R^N$.
Assume that there exists $k_A\in L^1_{loc}(\R^{2N}\setminus\Delta)$ satisfying \eqref{lòkgyftdresdetfyguhijok}, and such that~\eqref{mnbhgvcfdxszxdfcgvhb} holds.

Then, for almost every $x\ne y$,
\begin{equation}\label{CHARKE}
k_A(x,y)=-\frac{d_{N,s}^2}{2}\lim_{\varepsilon\searrow0}\int_{\R^N\setminus\left(B_\varepsilon(x)\cup B_\varepsilon(y)\right)}\frac{(z-x)\cdot A(z)(z-y)}{|z-x|^{N+s+1}|z-y|^{N+s+1}}\, dz.
\end{equation}
\end{theorem}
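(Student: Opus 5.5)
The plan is to combine the weak identification of the kernel with the pointwise convergence given by Lemma~\ref{lemmareg1}. By the forthcoming Theorem~\ref{thmA(x)identification}, the bilinear form of a symmetric representing kernel $k_A$ is recovered by the truncated kernels. Concretely, for all $u,v\in C_c^\infty(\R^N)$,
\[
\iint_{\R^{2N}}(u(x)-u(y))(v(x)-v(y))k_A(x,y)\,dx\,dy=\lim_{\varepsilon\searrow0}\iint_{\R^{2N}}(u(x)-u(y))(v(x)-v(y))k_{A,\varepsilon}(x,y)\,dx\,dy .
\]
If that theorem is not available in exactly this form, I would derive it directly. Expand $D^su\cdot A D^sv$ by Fubini, restricting the $z$-integral to $\R^N\setminus(B_\varepsilon(x)\cup B_\varepsilon(y))$. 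Then use $\int_{\R^N}(u(z)-u(x))\frac{z-x}{|z-x|^{N+s+1}}\,dx$ and symmetrize in $x,y$, so that the product $(u(z)-u(x))(v(z)-v(y))$ becomes $-\frac12(u(x)-u(y))(v(x)-v(y))$ plus terms that vanish. This is exactly where the factor $-\frac{d_{N,s}^2}{2}$ in~\eqref{kerneltruncated} comes from.

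The second step is to pass to the limit inside the double integral. By Lemma~\ref{lemmareg1}, $k_{A,\varepsilon}(x,y)\to -\frac{d_{N,s}^2}{2}I_A(x,y)$ for every $x\ne y$. I then need a dominating function for $(u(x)-u(y))(v(x)-v(y))k_{A,\varepsilon}(x,y)$ that is uniform in $\varepsilon$. I would aim for the bound
\[
|k_{A,\varepsilon}(x,y)|\le C_K |x-y|^{-N-2s},
\]
for $x,y$ in a compact set $K$, and a suitable tail bound otherwise. To prove it, split the $z$-integral into the regions near $x$, near $y$, and far from both. Near $x$, subtract $A(x)$. The constant-matrix part is handled by odd symmetry on spheres about $x$, plus a smooth remainder from the factor $(z-y)/|z-y|^{N+s+1}$, which is $C^1$ near $x$. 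The part with $A(z)-A(x)$ is controlled by $\int \omega_{A,K}(r)r^{-1-s}\,dr$ via~\eqref{mkojihguyftdreswxrdcftvgybhjn}. Scaling $r$ by $|x-y|$ should produce the factor $|x-y|^{-N-2s}$, up to the Dini integral. Since $|u(x)-u(y)||v(x)-v(y)|\le C\min\{|x-y|^2,1\}$, this gives integrability near the diagonal. When one of $x,y$ lies outside the supports, the integrand is $u(x)v(x)$ times an integral in $y$ over $|x-y|\ge$ dist, and boundedness of $A$ gives the decay $|x-y|^{-N-2s}$ there. Dominated convergence then yields
\[
\iint(u(x)-u(y))(v(x)-v(y))\Big(k_A+\tfrac{d_{N,s}^2}{2}I_A\Big)\,dx\,dy=0 .
\]

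The final step is a localization argument. Set $h:=k_A+\frac{d_{N,s}^2}{2}I_A$, which is symmetric and lies in $L^1_{loc}(\R^{2N}\setminus\Delta)$. Take $u,v$ supported in disjoint balls $B_1,B_2$. Then the identity reduces to
\[
-2\int_{B_1}\int_{B_2}u(x)v(y)h(x,y)\,dy\,dx=0 ,
\]
using symmetry. Since products $u(x)v(y)$ span a dense class, $h=0$ a.e. on $B_1\times B_2$. Covering $\R^{2N}\setminus\Delta$ by countably many such products of disjoint balls gives~\eqref{CHARKE} a.e.

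I expect the main obstacle to be the uniform-in-$\varepsilon$ bound on $k_{A,\varepsilon}$ near $z=x$ and $z=y$. This is where the merely Dini-type modulus must beat the borderline singularity $|z-x|^{-N-s}$ against the cancellation. The step is to be done by mimicking the proof of Lemma~\ref{lemmareg1}, keeping track of constants.
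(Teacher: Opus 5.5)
Your skeleton matches the paper's: weak identification via Theorem~\ref{thmA(x)identification}, convergence of $k_{A,\varepsilon}$ from Lemma~\ref{lemmareg1}, and localization on products of disjoint sets (which is what \cite[Lemma~2.5]{DSVPART1} provides in the paper). The gap is in your second step.

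There you pass to the limit for \emph{arbitrary} $u,v\in C_c^\infty(\R^N)$ by dominated convergence. In the region where $x$ lies in the supports and $y$ is far away, you claim that boundedness of $A$ gives a bound on $k_{A,\varepsilon}(x,y)$ of order $|x-y|^{-N-2s}$, uniformly in $\varepsilon$. This is not correct as stated. Boundedness of $A$ only controls the part of the $z$-integral away from $z=x$ and $z=y$. The piece near $z=y$ is a principal value, and bounding it uniformly in $\varepsilon$ needs the cancellation argument of Lemma~\ref{lemmareg1}. That argument costs a quantity like $\int_0^{\rho}\omega_{A,K_y}(r)\,r^{-1-s}\,dr$, with $K_y$ a compact neighborhood of $y$. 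Hypothesis~\eqref{mkojihguyftdreswxrdcftvgybhjn} is assumed on each compact set separately, with no uniformity as $|y|\to+\infty$. So you have no integrable majorant in $y$.

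Even under a uniform Dini condition, the decay you would get is $|x-y|^{-N-s}$, not $|x-y|^{-N-2s}$. By Proposition~\ref{ASYPR} and Theorem~\ref{CARASS}, the $|x-y|^{-N-s}$ term is present unless $A$ is divergence free. That rate would still be integrable at infinity, but uniform Dini is not what is assumed.

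The gap is self-inflicted, because your third step only uses $u,v$ with disjoint supports. For such pairs, $(u(x)-u(y))(v(x)-v(y))=-u(x)v(y)-u(y)v(x)$ vanishes outside the compact set $(\supp u\times\supp v)\cup(\supp v\times\supp u)\subset\R^{2N}\setminus\Delta$. On that set, Lemma~\ref{lemmareg1} gives \emph{uniform} convergence $k_{A,\varepsilon}\to\widetilde k_A$, not merely pointwise convergence. The limit therefore passes with the trivial bound $\|u\|_{L^1(\R^N)}\|v\|_{L^1(\R^N)}\sup_{\supp u\times\supp v}|k_{A,\varepsilon}-\widetilde k_A|$. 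No near-diagonal estimate, no tail estimate and no dominated convergence are needed, and this is exactly how the paper argues.

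Incidentally, the near-diagonal bound you flagged as the main obstacle is actually true. Freezing $A$ at $z=x$, the Dini remainder contributes at most $C|x-y|^{-N-s}\int_0^{|x-y|/2}\omega_{A,K}(r)\,r^{-1-s}\,dr\le C|x-y|^{-N-2s}$ for $|x-y|\le 1$. The remaining pieces are also $O(|x-y|^{-N-2s})$. So the real difficulty in your Step~2 is at infinity, not at the diagonal. The cleanest fix is to specialize to disjoint supports before letting $\varepsilon\searrow0$.
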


In particular, the kernel representing the bilinear form is uniquely determined outside the diagonal by the matrix $A(x)$, and is given by \eqref{CHARKE}.  

A formally similar integral representation related to \eqref{CHARKE} can be found in \cite[Theorem~2.6]{LoRodBOH} for bounded and measurable matrices $A(x)$. However, these assumptions do not guarantee the existence of the principal value, and the convergence of the truncated bilinear forms does not imply pointwise convergence of the kernel. Our approach addresses this gap by establishing first a weak identification for which no regularity assumption is needed, and then rigorously deriving the pointwise formula under the uniform
fractional Dini regularity condition~\eqref{mkojihguyftdreswxrdcftvgybhjn}. Moreover, unlike the nonuniqueness phenomenon discussed in \cite[Remark~2.8]{LoRodBOH} for test functions supported in a fixed domain, our formulation on the whole space ensures the uniqueness of any symmetric representing kernel (see~\cite[Lemma~2.5]{DSVPART1}.) 

We also point out that 
if~$A$ is a constant matrix, then \eqref{CHARKE} boils down to the formula derived in~\cite[Theorem~1.2]{DSVPART1}.

\begin{corollary}\label{COINCIDONOCOST}
Let~$A\in \R^{N\times N}$ be a constant and symmetric matrix.
Assume that there exists~$k_A\in L^1_{loc}(\R^{2N}\setminus\Delta)$ satisfying \eqref{lòkgyftdresdetfyguhijok}, and such that~\eqref{mnbhgvcfdxszxdfcgvhb} holds.

Then,
\begin{equation}\label{PARTE1COR1.5}
k_A(x,y) = \frac{c_{N,s}}{4s|x-y|^{N+2s}}\left((N+2s)\frac{(x-y)\cdot A(x-y)}{|x-y|^2}-\tr(A)\right).
\end{equation}
\end{corollary}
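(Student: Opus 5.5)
The plan is to apply Theorem~\ref{thmreg2} to the constant field $A(z)\equiv A$ and then evaluate the principal value in \eqref{CHARKE} in closed form. A constant matrix has $\omega_{A,K}\equiv0$ for every compact $K$, so the fractional Dini condition \eqref{mkojihguyftdreswxrdcftvgybhjn} holds trivially. Since the hypotheses of Corollary~\ref{COINCIDONOCOST} are exactly those of Theorem~\ref{thmreg2} in this special case, we get, for a.e. $x\ne y$,
\[
k_A(x,y)=-\frac{d_{N,s}^2}{2}\,I_A(x,y),\qquad I_A(x,y)=\sum_{i,j}A_{ij}\,J_{ij}(x-y),
\]
where $J_{ij}(h)$ denotes the principal value of $\int \frac{(z-x)_i(z-y)_j}{|z-x|^{N+s+1}|z-y|^{N+s+1}}\,dz$ with $h=x-y$. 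Translating and dilating ($z=y+|h|w$) shows that $J_{ij}(h)=|h|^{-N-2s}\,\widetilde J_{ij}(h/|h|)$. Rotation covariance then forces the form $\widetilde J_{ij}(e)=a\,e_ie_j+b\,\delta_{ij}$ for constants $a,b$ depending only on $N,s$. Consequently
\[
k_A(x,y)=-\frac{d_{N,s}^2}{2}\,|x-y|^{-N-2s}\left(a\,\frac{(x-y)\cdot A(x-y)}{|x-y|^2}+b\,\tr(A)\right).
\]
The function $k_A$ is defined only a.e., but the right-hand side is continuous off $\Delta$, so we may take it as the representative and the identity holds for all $x\ne y$.

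To find $a$ and $b$, the fastest route is to invoke the formula already proved for constant matrices in \cite[Theorem~1.2]{DSVPART1}. The kernel there coincides with ours a.e. by the uniqueness of symmetric representing kernels (\cite[Lemma~2.5]{DSVPART1}), which yields \eqref{PARTE1COR1.5} directly.

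If a self-contained computation is preferred, I would instead use two test cases.
\begin{itemize}
\item \emph{The case $A=I$.} Here $\int|D^su|^2=\int|(-\Delta)^{s/2}u|^2$, so the kernel must be $\frac{c_{N,s}}{2}|x-y|^{-N-2s}$. This gives the relation $-\frac{d_{N,s}^2}{2}(a+Nb)=\frac{c_{N,s}}{2}$.
\item \emph{The trace over $i=j$ directions.} A second relation comes from computing $\sum_i \widetilde J_{ii}$ against $e_ie_i$. Concretely, I would take $A=e\otimes e$ with $e=e_1$ and compare with a direction orthogonal to $h$. Alternatively, one can use the divergence identity $\operatorname{div}_z\big((z-x)|z-x|^{-N-s-1}\big)=-(1+s)\ldots$, i.e.\ an integration by parts in $z$ that expresses $\widetilde J$ through the Riesz potential $|z-y|^{-N-s+1}$ and its derivatives.
\end{itemize}
Both routes should give $b$ proportional to $-1$ and $a=-(N+2s)\,b$, matching the factor $(N+2s)\frac{(x-y)\cdot A(x-y)}{|x-y|^2}-\tr(A)$.

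The main obstacle is the explicit evaluation of the constants, including the Gamma-function bookkeeping that links $d_{N,s}^2$ to $c_{N,s}/(4s)$. For this reason I would rely on the identification with \cite{DSVPART1} (uniqueness plus the known constant-matrix formula) rather than recompute the singular integral. The only genuinely new check is that the principal value in \eqref{CHARKE} exists and is covered by Theorem~\ref{thmreg2}, which, as noted above, is immediate for constant $A$.
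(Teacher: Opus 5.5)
Your proof is correct, but its decisive step differs from the paper's. You obtain \eqref{PARTE1COR1.5} by citing the explicit constant-coefficient kernel of \cite[Theorem~1.2]{DSVPART1} together with the uniqueness result \cite[Lemma~2.5]{DSVPART1}. These two citations alone already prove the corollary as stated. Your preliminary use of Theorem~\ref{thmreg2}, plus the scaling/rotation reduction, then transfers the formula to the principal value in \eqref{CHARKE} at every $x\neq y$. That is the form actually used later, in Proposition~\ref{FREEZING} and for the averaged matrix $A_L$ in the proof of Theorem~\ref{ESPONENTS}.

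The paper instead computes $I_A(x,y)$ directly. For $N+2s>2$ it differentiates the Riesz composition identity \eqref{STEININTEGRAL} with $\alpha=\beta=1-s$ in $x_j$ and $y_k$, checks that the two singularities decouple, and evaluates $\partial_{x_j}\partial_{y_k}|x-y|^{2-N-2s}$. For $N=1$ it uses a regularized integral with Beta-function identities when $s<1/2$, and an elementary computation when $s=1/2$. Theorem~\ref{thmreg2} is invoked only at the end.

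The paper's route therefore gives an independent check that \eqref{CHARKE} reproduces Part~I's formula, and the principal-value identity holds without any existence input. Your route is much shorter, but the consistency becomes a consequence of uniqueness plus existence from Part~I rather than of a computation.

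Your \emph{self-contained} alternative is not a proof as written. The second relation between $a$ and $b$ is never derived. The Riesz-potential approach also breaks down when $N+2s\le 2$, i.e.\ $N=1$ and $s\le 1/2$, because $\int_{\R}|z-x|^{-s}|z-y|^{-s}\,dz$ diverges; this is exactly why the paper splits into three cases. Finally, with your normalization, $b=c_{N,s}/(2s\,d_{N,s}^2)>0$ and $a=-(N+2s)\,b$, so $b$ is positive rather than ``proportional to $-1$''.
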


We stress that
formula \eqref{CHARKE} will play a central role in what follows. First, it will allow us to prove that suitable spatial averages of $k_A(x,x+h)$ satisfy, for large $|h|$, an upper bound of order~$|h|^{-N-2s}$. Then,  a precise analysis of \eqref{CHARKE} will allow us to identify the contribution of order~$|x-y|^{-N-s}$ in the pointwise behavior of $k_A(x,y)$ at infinity and to show that this contribution vanishes if and only if the matrix $A(x)$ is divergence free.

\subsection{Averaged decay at infinity}\label{sectiondecayinfinity}
Theorem  \ref{sxdfcghvbjknlmò,à.,lmknjbhvgcfdxszxfcgvbhn} identifies $2s$ as a ``critical" exponent for pointwise bounds on the kernel when $|x-y|>1$.
However, it does not show that $k_A(x, y)$ itself satisfies \eqref{srdtfyguioplèòlkoijhugyftr2} with $\eta=2s$. 

We now prove that, for the kernel $k_A(x, y)$ explicitly given by \eqref{CHARKE}, the exponent $2s$ appears naturally after spatial averaging. More prcisely, we show that for every fixed $h>0$, the average of $k_A(x,x+h)$ over cubes of increasing size is controlled by $|h|^{-N-2s}$. 

\begin{theorem}\label{ESPONENTS}
Let $A\in L^\infty(\R^N,\R^{N\times N})$ satisfy~\eqref{mkojihguyftdreswxrdcftvgybhjn}
for every compact set~$K\subset\R^N$.
Let~$k_A$ be given by~\eqref{CHARKE}.

Then, there exists $C>0$, depending only on $N$, $s$ and $\|A\|_{L^\infty(\R^N, \R^{N\times N})}$, such that, for every~$x_\star\in\R^N$ and~$h\in\R^N\setminus B_1$,
\[
\limsup_{L\to+\infty}\frac1{(2L)^N}\left|\;\int_{x_\star+[-L,L]^N}k_A(x,x+h)\,dx\right|\le\frac{C}{|h|^{N+2s}}.
\]
\end{theorem}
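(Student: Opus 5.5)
The plan is to average the principal value formula~\eqref{CHARKE} in $x$ before estimating it. Averaging replaces $A$ by a slowly varying matrix field. The cancellations that make the principal value converge can then be handled as in the constant coefficient case, and the constant matrix contribution is controlled by the explicit formula of Corollary~\ref{COINCIDONOCOST}.

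\emph{Step 1: change of variables and exchange of limits.} I would substitute $w=z-x$ in~\eqref{CHARKE}, which gives
\[
k_A(x,x+h)=-\frac{d_{N,s}^2}{2}\lim_{\varepsilon\searrow0}\int_{\R^N\setminus(B_\varepsilon\cup B_\varepsilon(h))}\frac{w\cdot A(x+w)(w-h)}{|w|^{N+s+1}|w-h|^{N+s+1}}\,dw.
\]
Set $Q_L:=x_\star+[-L,L]^N$ and $\bar A_L(w):=(2L)^{-N}\int_{Q_L}A(x+w)\,dx$. I would then integrate over $x\in Q_L$ and exchange the $x$-integral with the limit in $\varepsilon$ and with the $w$-integral. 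For fixed $\varepsilon$ the truncated integrand is absolutely integrable, since away from the two balls it is bounded by $\|A\|_{L^\infty(\R^N,\R^{N\times N})}|w|^{-N-s}|w-h|^{-N-s}$, so Fubini applies. To move the limit $\varepsilon\searrow0$ through the $x$-integral, I would show that the convergence in Lemma~\ref{lemmareg1} is uniform for $x$ in the compact set $\overline{Q_L}$. The tail of the principal value near $w=0$ is controlled by $\int_0^\varepsilon\omega_{A,K}(r)r^{-1-s}\,dr$ with $K$ a neighbourhood of $Q_L$, and similarly near $w=h$. By~\eqref{mkojihguyftdreswxrdcftvgybhjn} this tends to $0$ uniformly in $x$. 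I expect this step to be the main technical obstacle. The Dini condition enters only here, so it does not affect the constant $C$. The outcome is
\[
\frac1{(2L)^N}\int_{Q_L}k_A(x,x+h)\,dx=-\frac{d_{N,s}^2}{2}\,\PV\!\!\int_{\R^N}\frac{w\cdot \bar A_L(w)(w-h)}{|w|^{N+s+1}|w-h|^{N+s+1}}\,dw .
\]

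\emph{Step 2: properties of $\bar A_L$.} Clearly $|\bar A_L|\le\|A\|_{L^\infty(\R^N,\R^{N\times N})}$. The cubes $Q_L+w$ and $Q_L+w'$ have symmetric difference of measure at most $C|w-w'|L^{N-1}$, which gives
\[
|\bar A_L(w)-\bar A_L(w')|\le C\|A\|_{L^\infty(\R^N,\R^{N\times N})}\frac{|w-w'|}{L}.
\]
Let $A_0$ be the symmetric part of $\bar A_L(0)$. Only the symmetric part matters on the diagonal terms, and the antisymmetric part can be tracked separately with the same bounds.

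\emph{Step 3: splitting the principal value.} I would decompose the integral as
\[
\PV\!\!\int \frac{w\cdot A_0(w-h)}{\cdots}\,dw\;+\;\int \frac{w\cdot(\bar A_L(w)-A_0)(w-h)}{\cdots}\,dw .
\]
By~\cite[Theorem~1.2]{DSVPART1}, equivalently formula~\eqref{PARTE1COR1.5}, the first term is bounded by $C|A_0||h|^{-N-2s}$.

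The second term I would split over three regions: $B_{|h|/2}$, $B_{|h|/2}(h)$, and the complement of their union.
\begin{itemize}
\item On $B_{|h|/2}$, the bound $|\bar A_L(w)-A_0|\le C\|A\||w|/L$ makes the singularity integrable. The contribution is at most $C\|A\||h|^{-N-s}|h|^{1-s}/L$, which vanishes as $L\to+\infty$.
\item On $B_{|h|/2}(h)$, I would write $\bar A_L(w)-A_0=(\bar A_L(w)-\bar A_L(h))+(\bar A_L(h)-A_0)$. The first difference is handled exactly as near $0$. The second is a constant matrix of size $O(|h|/L)$; its principal value over a ball centred at the singularity converges by oddness, with bound $C|h|^{-N-s}|h|^{1-s}$. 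So this piece is also $o(1)$ as $L\to+\infty$.
\item On the complement, no cancellation is needed. Using $|\bar A_L|,|A_0|\le\|A\|$, the integrand is bounded by $C\|A\||w|^{-N-s}|w-h|^{-N-s}$. Integrating this over $\{|w|\ge|h|/2,\ |w-h|\ge|h|/2\}$ gives $C\|A\||h|^{-N-2s}$, by scaling $w=|h|\tilde w$.
\end{itemize}

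\emph{Conclusion.} Taking $\limsup_{L\to+\infty}$ removes all the $O(1/L)$ terms. What remains is bounded by $C(N,s)\|A\|_{L^\infty(\R^N,\R^{N\times N})}|h|^{-N-2s}$, uniformly in $x_\star$ and in $h\notin B_1$, which is the claim.
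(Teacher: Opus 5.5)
Your proposal is correct and follows the paper's proof essentially step by step. The shared steps are: uniform-in-$x$ convergence of the truncated integrals on $\overline{Q_L}$, which lets you average under the principal value; the bound $|\bar A_L(w)-\bar A_L(w')|\le C|w-w'|/L$ from symmetric differences of cubes; comparison with the constant matrix $\bar A_L(0)$ via Corollary~\ref{COINCIDONOCOST}; and the same three-region split around $0$, $h$ and the complement. The differences are cosmetic: on $B_{|h|/2}(h)$ you bound the constant piece $\bar A_L(h)-\bar A_L(0)$ by $C|h|/L$, so it vanishes as $L\to+\infty$, where the paper uses $2\|A\|_{L^\infty(\R^N,\R^{N\times N})}$ and a direct $C|h|^{-N-2s}$ estimate; and you take the $\limsup$ directly instead of first proving $\frac{C}{|h|^{N+2s}}\left(1+\frac{|h|}{L}\right)$ for each $L$.
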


\subsection{Asymptotics and positivity near the diagonal}
We next use the representation formula~\eqref{CHARKE} to study $k_A(x,y)$
near the diagonal.  
If $A$ is constant, then the positivity of the kernel is determined by the condition $(N+2s)\lambda_{\min}(A)>\tr(A)$, where~$\lambda_{\min}(A)$ is the smallest eigenvalue of the matrix~$A$ and~$\tr(A)$ is its trace,
as showed in~\cite[Theorem~1.2]{DSVPART1}.

For a variable matrix $A(x)$, we cannot expect that the corresponding pointwise condition controls the kernel at every scale. Nevertheless, the same quantity governs the behavior of the kernel near the diagonal. More precisely, we show that, after rescaling around a point $x$, the kernel converges to the constant coefficient kernel associated
with the frozen matrix $A(x)$. Thus, for every $e\in\mathbb S^{N-1}$,
\[
\lim_{r\searrow0} r^{N+2s}k_A(x,x+re) = \frac{c_{N,s}}{4s}\left((N+2s)e\cdot A(x)e-\tr(A(x))\right),
\]
(see the forthcoming Proposition \ref{FREEZING}). We notice that, since
\[
\min_{e\in\mathbb S^{N-1}}\left((N+2s)e\cdot A(x)e-\tr(A(x))\right) = (N+2s)\lambda_{\min}(A(x))-\tr(A(x)),
\]
the quantity $(N+2s)\lambda_{\min}(A(x))-\tr(A(x))$ determines whether the above limit is positive uniformly with respect to the direction $e$. 

In the next result we show that, on compact sets, a uniform positive lower bound for the above quantity is equivalent to a positive estimate from above and below for $k_A(x,y)$ when $x$ and $y$ are sufficiently close. More precisely, we have the following: 

\begin{theorem}\label{LOCALETRACE}
Let $A\in L^\infty(\R^N,\R^{N\times N})$ be symmetric and satisfy~\eqref{mkojihguyftdreswxrdcftvgybhjn} for every compact set~$K\subset\R^N$. Let $k_A$ be given by~\eqref{CHARKE}.

Then, the following statements are equivalent:
\begin{enumerate}
\item[(i)] We have that
\[
\inf_{x\in K}\left((N+2s)\lambda_{\min}(A(x))-\tr(A(x))\right)>0.
\]
\item[(ii)] For every compact set $K\subset\R^N$, there exist $r_K>0$ and $0<\lambda_K\le\Lambda_K<+\infty$ such that
\[
\frac{\lambda_K}{|x-y|^{N+2s}}\le k_A(x,y)\le\frac{\Lambda_K}{|x-y|^{N+2s}}
\]
for every $x\in K$ and every $y\in\R^N$ satisfying $0<|x-y|\le r_K$.
\end{enumerate}
\end{theorem}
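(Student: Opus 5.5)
The proof rests on a \emph{uniform} version of the freezing statement (Proposition~\ref{FREEZING}). I read statement (i) as quantified over every compact set $K\subset\R^N$, like (ii). The central estimate I would establish is: for every compact $K$,
\begin{equation*}
\lim_{r\searrow0}\ \sup_{x\in K,\ e\in\mathbb S^{N-1}}\left| r^{N+2s}k_A(x,x+re)-\frac{c_{N,s}}{4s}\Big((N+2s)e\cdot A(x)e-\tr(A(x))\Big)\right|=0 .
\end{equation*}
To prove it, fix $x\in K$ and $y=x+re$. In~\eqref{CHARKE} I would write $A(z)=A(x)+(A(z)-A(x))$.

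The frozen part $A(x)$ is a constant symmetric matrix. By Corollary~\ref{COINCIDONOCOST}, together with the fact that~\eqref{CHARKE} for constant matrices reproduces~\eqref{PARTE1COR1.5}, its principal value equals exactly $r^{-N-2s}\frac{c_{N,s}}{4s}((N+2s)e\cdot A(x)e-\tr A(x))$. I would obtain this identity either from the direct computation of~\cite{DSVPART1} or by scaling.

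For the remainder I would change variables $z=x+rw$. This gives $r^{N+2s}$ times
\[
\int\frac{w\cdot(A(x+rw)-A(x))(w-e)}{|w|^{N+s+1}|w-e|^{N+s+1}}\,dw ,
\]
and no principal value is needed here. Indeed, near $w=0$ the integrand is bounded by $\omega_{A,K'}(r|w|)|w|^{-N-s}$, which is integrable and tends to $0$ uniformly by the Dini condition~\eqref{mkojihguyftdreswxrdcftvgybhjn}: after the substitution $\rho=r|w|$, the contribution of $|w|\le 1/2$ is at most $C r^{s}\int_0^{r/2}\omega(\rho)\rho^{-1-s}\,d\rho\to0$. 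Near $w=e$ the integrand is bounded by $\omega(r)|w-e|^{-N-s}$, which is not integrable. So there I would keep the splitting $A(x+rw)-A(x)=(A(x+rw)-A(x+re))+(A(x+re)-A(x))$. The first piece is Dini-controlled exactly as before. The second piece is a constant matrix times an odd-type kernel whose principal value around $e$ is bounded, and it carries the factor $\omega(r)\to0$. In the region $|w|\ge 2$ the integrand is bounded by $2\|A\|_\infty|w|^{-2N-2s}$ times a small factor, by dominated convergence, since $A$ is continuous on compacts. Because $A$ may only be Dini on compacts, I would split this region into $|w|\le R$, handled with the modulus on $K'=K+\overline{B_{R}}$, and $|w|>R$, handled with $\|A\|_\infty R^{-N-2s}$. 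This gives the uniformity in $x\in K$ and $e$.

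The implication (i)$\Rightarrow$(ii) then follows at once. Set $m:=\inf_K((N+2s)\lambda_{\min}-\tr A)>0$ and let $M_K$ bound $|(N+2s)e\cdot Ae-\tr A|$ on $K$, which is finite since $A$ is bounded. Choose $r_K$ so that the uniform error is below $\frac{c_{N,s}}{8s}m$. Then (ii) holds with $\lambda_K=\frac{c_{N,s}}{8s}m$ and $\Lambda_K=\frac{c_{N,s}}{4s}M_K+\lambda_K$.

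For (ii)$\Rightarrow$(i), fix $x\in K$ and let $e$ be a unit eigenvector for $\lambda_{\min}(A(x))$. Letting $r\searrow0$ and using Proposition~\ref{FREEZING}, the limit is $\ge\lambda_K$. This gives $\frac{c_{N,s}}{4s}((N+2s)\lambda_{\min}(A(x))-\tr A(x))\ge\lambda_K$ for every $x\in K$, hence the infimum is positive.

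The main obstacle is the uniform error bound near $w=e$. There the naive Dini bound fails, and one has to use cancellation: the principal value of $\int_{B_{1/2}(e)}\frac{w\cdot B(w-e)}{|w|^{N+s+1}|w-e|^{N+s+1}}dw$ must be bounded by $C|B|$ uniformly in $e$. I would prove this by expanding $w/|w|^{N+s+1}$ to first order around $e$. The zeroth-order term integrates to zero over spheres by oddness, and the remainder is $O(|w-e|^{1-N-s})$, which is integrable.
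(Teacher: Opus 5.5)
Your proposal is correct and follows the paper's proof essentially step by step. The shared ingredients are the uniform freezing estimate of Proposition~\ref{FREEZING} via $z=x+rw$, the Dini bound near $w=0$, and the splitting $A(x+rw)-A(x)=(A(x+rw)-A(x+re))+(A(x+re)-A(x))$ near $w=e$ with first-order cancellation for the constant piece. They also include the truncation at $|w|=R$, the same choice of $r_K$, $\lambda_K$, $\Lambda_K$, and the same limiting argument for (ii)$\Rightarrow$(i).

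The only slip is your claim that the remainder needs no principal value. As you note right afterwards, the constant piece $A(x+re)-A(x)$ is not absolutely integrable near $w=e$, and that is exactly where the principal value and the cancellation are used.
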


We stress that the restriction to small distances is essential. Indeed, when $A$ is a constant matrix,  the associated kernel is homogeneous, so that the same condition controls both small and large scales. For variable coefficients matrices this homogeneity is lost, and the behavior at infinity has to be studied separately (see Sections \ref{sectiondecayinfinity} and \ref{sectiondivergencefree}).

\subsection{Failure of the truncation inequality}

In this section, we investigate the truncation property associated with the bilinear form
\begin{equation}\label{MATHCALBDEFN}
\mathcal B_A(u,v):=\int_{\R^N} A(x)D^su(x)\cdot D^sv(x)\, dx,
\end{equation} in connection
with~\cite[Open Problem~1.9]{MR3819528}.  

For symmetric, bounded, and uniformly elliptic matrices $A(x)$, the properties required for~$\mathcal B_A(u, v)$ to define a Dirichlet form are already known, except for the behavior under truncation.

More precisely, the question posed in \cite[Open Problem 1.9]{MR3819528} is whether, defining
\begin{equation}\label{TAGLIODEFN}
T(t):=\min\{\max\{t,0\},1\},
\end{equation}
the inequality
\[
\mathcal B_A(T(u),T(u))\le \mathcal B_A(u,u)
\]
holds for every admissible function $u$. 

A positive answer would imply that $\mathcal B_A(u, v)$ is a Dirichlet form. The results below show that this
inequality fails in general.  Indeed Proposition \ref{FREEZING}, together with a failure of the trace condition in Theorem \ref{LOCALETRACE} at a single point, is enough to produce a function whose energy increases under the truncation \eqref{TAGLIODEFN}. Thus, the local sign of the kernel has a direct variational consequence for the associated bilinear fom.

More precisely, we prove the following:

\begin{theorem}\label{thmaltroopenproblem}
Let~$A:\R^N\to \R^{N\times N}$ be a symmetric matrix and satisfy~\eqref{uniformlyellipticdefn} and~\eqref{mkojihguyftdreswxrdcftvgybhjn} for every compact set~$K\subset\R^N$.
Let $k_A$ be as in \eqref{CHARKE}.

In addition, assume that there exists $x_0\in\R^N$ such that
\begin{equation}\label{x0conditionbhgvfctdxrsdtfygh}
(N+2s)\lambda_{\min}(A(x_0)) < \tr(A(x_0)).
\end{equation}

Then, there exists $u\in C_c^\infty(\R^N)$ such that
\[
\mathcal B_A(T(u),T(u)) > \mathcal B_A(u,u).
\]
\end{theorem}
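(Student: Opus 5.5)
We plan to exploit the kernel representation and the sign of $k_A$ near $x_0$. Since $A$ is continuous, assumption \eqref{x0conditionbhgvfctdxrsdtfygh} gives a unit eigenvector $e_0$ of $A(x_0)$ for $\lambda_{\min}(A(x_0))$ with $(N+2s)e_0\cdot A(x_0)e_0-\tr(A(x_0))<0$. We will use Proposition~\ref{FREEZING}, upgraded to be locally uniform in $x$ near $x_0$ and in directions $e$ near $e_0$; this should follow from the same proof, combined with continuity of $A$ and the uniform Dini bound \eqref{mkojihguyftdreswxrdcftvgybhjn} on a compact neighbourhood. It yields $\delta,r_0,\theta>0$ such that
\[
k_A(x,x+re)\le -\frac{\delta}{r^{N+2s}}\quad\text{for all } |x-x_0|<r_0,\ 0<r<r_0,\ |e-e_0|<\theta .
\]
In other words, $k_A<0$ on a double cone around the direction $e_0$, localized near $(x_0,x_0)$.

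By \eqref{mnbhgvcfdxszxdfcgvhb} (with the kernel identified by Theorem~\ref{thmreg2}, or via the truncations $k_{A,\varepsilon}$ if needed),
\[
\mathcal B_A(T(u),T(u))-\mathcal B_A(u,u)=\iint \big[(Tu(x)-Tu(y))^2-(u(x)-u(y))^2\big]k_A(x,y)\,dx\,dy .
\]
The bracket is always $\le 0$, since $T$ is $1$-Lipschitz. It is strictly negative exactly when truncation reduces the difference, for example when $u(x)>1$ and $u(y)\in[0,1]$. We therefore want the bracket to be supported essentially where $k_A<0$. We rescale around $x_0$: take $u_\rho(x)=\varphi((x-x_0)/\rho)$ with $\varphi\in C_c^\infty$ chosen so that $\varphi$ exceeds $1$ or goes below $0$ only on thin slabs perpendicular to $e_0$. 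A concrete choice is $\varphi(x)=\psi(x\cdot e_0)\chi(x')$, where $\chi$ is a wide plateau cutoff in the orthogonal variables and $\psi$ is a one-dimensional profile that overshoots $1$ and undershoots $0$.

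Scaling gives
\[
\rho^{2s-N}\big(\mathcal B_A(Tu_\rho)-\mathcal B_A(u_\rho)\big)\;\longrightarrow\;\iint\big[(T\varphi(x)-T\varphi(y))^2-(\varphi(x)-\varphi(y))^2\big]k_{A(x_0)}(x,y)\,dx\,dy
\]
as $\rho\to0$, where $k_{A(x_0)}$ is the constant-coefficient kernel from Corollary~\ref{COINCIDONOCOST}. Justifying this limit is a dominated-convergence argument. Near the diagonal we use the freezing asymptotics together with an $|x-y|^{-N-2s}$ bound, the bracket being $O(|x-y|^2)$ locally. Far away we need tail control of $k_A$ at scale $\rho$, which follows from \eqref{CHARKE} and the boundedness of $A$.

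This reduces the problem to the constant-coefficient case: find $\varphi$ with $\mathcal B_{A(x_0)}(T\varphi)>\mathcal B_{A(x_0)}(\varphi)$ when $k_{A(x_0)}$ is negative in the $e_0$ directions. Since $A(x_0)$ is constant, we can pass to Fourier variables, using $\mathcal B_{A(x_0)}(u)=\int A(x_0)\xi\cdot\xi\,|\xi|^{2s-2}|\hat u|^2\,d\xi$ (up to normalisation). We then take a profile that is essentially one-dimensional in direction $e_0$ and slowly varying in $x'$. The energy of such a profile tends to $\lambda_{\min}(A(x_0))$ times a one-dimensional $H^s$-type quantity, while the kernel integrated over the transverse variables gives a one-dimensional kernel proportional to $(N+2s)\lambda_{\min}-\tr A(x_0)<0$ times $|t|^{-1-2s}$. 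Precisely, we would integrate $k_{A(x_0)}(x,x+h)$ over $h'\in e_0^\perp$ and check that the resulting one-dimensional kernel is negative; we will need to verify this constant. Once it is negative, the truncation difference for the one-dimensional profile $\psi$ is strictly positive, because the bracket is $\le0$ and nonzero on a positive-measure set. Choosing the plateau of $\chi$ wide makes the edge effects in $x'$ negligible.

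The main obstacle is this last sign computation: showing that the transversally integrated kernel is really negative, and not merely that the pointwise kernel is negative on a cone. If that fails, the fallback is to use a genuinely localized $\psi$ whose truncation region is a tiny cap, so that only pairs $(x,y)$ with $y-x$ inside the negative cone see a nonzero bracket. To get this we choose $\varphi$ with $\varphi>1$ on a thin needle along $e_0$ and $\varphi\in[0,1]$ nearby. We then estimate the contribution of pairs outside the cone by the smallness of the bracket there, comparing the decay rates of the two contributions.
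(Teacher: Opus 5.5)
Your first step matches the paper's starting point and is fine: negativity of $k_A$ on a cone around $e_0$ near $(x_0,x_0)$ follows from Proposition~\ref{FREEZING} and continuity of $A$. No upgrade is needed, since that proposition is already uniform in $x\in K$ and $e\in\mathbb S^{N-1}$. The gap is in turning this into a function whose energy increases under truncation.

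\textbf{Why your constructions fail.} Your primary construction cannot work. As you observe yourself, a profile $\psi(x\cdot e_0)\chi(x')$ with a wide plateau has energy (per unit transverse area) asymptotic to $e_0\cdot A(x_0)e_0$ times a one-dimensional $H^s$ energy. That one-dimensional form has the positive kernel $\frac{c_{1,s}}{2}|t|^{-1-2s}$, so it is a Dirichlet form and truncation can only lower it. Equivalently, by homogeneity,
\[
\int_{\{h'\cdot e_0=0\}}k_{A(x_0)}(te_0+h')\,dh'=e_0\cdot A(x_0)e_0\,\frac{c_{1,s}}{2}\,|t|^{-1-2s}>0 .
\]
So the transversally integrated kernel is not proportional to $(N+2s)\lambda_{\min}(A(x_0))-\tr(A(x_0))$; the trace dependence cancels. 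For $N=2$ and $A=\mathrm{diag}(a_1,a_2)$, the coefficient of $a_2$ vanishes because $(2+2s)\int_\R(1+y^2)^{-2-s}\,dy=(1+2s)\int_\R(1+y^2)^{-1-s}\,dy$.

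The fallback is not an argument, and its geometry is backwards. Pairs $(x,y)$ with $x$ in a thin needle along $e_0$ and $y$ just outside it are displaced essentially transversally to $e_0$, i.e.\ outside the negative cone. More generally, suppose you overshoot above $1$ with amplitude $\tau$ on a set where $\varphi=1$. Then the term of order $\tau$ couples the overshoot region with every point where $\varphi<1$, including the whole exterior of $\supp\varphi$. Cone negativity alone does not control the sign of that term.

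\textbf{The missing idea.} Separate the two pieces of $u$ by amplitude, and clip below $0$ rather than above $1$.
\begin{enumerate}
\item From cone negativity, get two small disjoint balls $U\ni x_0$ and $V\ni x_0+re_0$ with $k_A<0$ a.e.\ on $U\times V$. The paper takes $U=B_{\alpha r}(x_0)$ and $V=B_{\alpha r}(x_0+re_0)$ with $\alpha$ small.
\item Take nonzero $\varphi\in C^\infty_c(U,[0,1])$ and $\psi\in C^\infty_c(V,[0,+\infty))$, and set $u:=\varphi-\tau\psi$ with $\tau>0$.
\item Then $T(u)=\varphi$, and $\mathcal B_A(\varphi,\psi)=-2\iint_{U\times V}\varphi(x)\psi(y)\,k_A(x,y)\,dx\,dy>0$.
\item By bilinearity and symmetry,
\[
\mathcal B_A(T(u),T(u))-\mathcal B_A(u,u)=2\tau\,\mathcal B_A(\varphi,\psi)-\tau^2\,\mathcal B_A(\psi,\psi).
\]
\end{enumerate}
The gain is linear in $\tau$ and comes only from $U\times V$, where $k_A<0$. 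All other pairs involving $V$ see $T(u)=0$ at both points, so they cost only $O(\tau^2)$. This is the comparison your fallback needed: orders in the amplitude $\tau$, not spatial decay rates.

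Choose $\tau=\mathcal B_A(\varphi,\psi)/\mathcal B_A(\psi,\psi)$, where $\mathcal B_A(\psi,\psi)>0$ by ellipticity. This gives the gain $\mathcal B_A(\varphi,\psi)^2/\mathcal B_A(\psi,\psi)>0$. This is exactly Lemma~\ref{lemmaotheropenproblem}. With it, your blow-up to $A(x_0)$ and the Fourier analysis are unnecessary. They would not help anyway, since the frozen constant-coefficient problem requires the same construction.
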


As observed in \cite{MR3819528}, a negative answer to the truncation problem is particularly interesting in view of the regularity theory available for equations with matrices $A\in VMO$. Our result shows that the associated bilinear form may fail to be a Dirichlet form even for constant, uniformly elliptic matrices, which in particular satisfy the uniform fractional Dini condition introduced above and belong to $VMO(\R^N)$.

\subsection{Improved decay and divergence free matrices}\label{sectiondivergencefree}
We now investigate what structural information on $A(x)$ can be deduced from the decay of the kernel at infinity. More precisely, we show that the decay condition
\begin{equation}\label{ mnbvcxdszaxdcfvbnm,.poijuhgf}
\lim_{|x|\to+\infty}|x|^{N+s}k_A(x,y)=0\quad\mbox{ for every }y\in\R^N
\end{equation}
is equivalent to the divergence free structure of the matrix $A(x)$.

\begin{theorem}\label{CARASS}
Let $s\in(0,1)$ and let $A\in L^\infty(\R^N,\R^{N\times N})$ be symmetric and satisfy~\eqref{mkojihguyftdreswxrdcftvgybhjn}
for every compact set~$K\subset\R^N$. 
Let $k_A$ be the kernel defined in \eqref{CHARKE}.

Then, the following statements are equivalent:
\begin{enumerate}
\item[(i)] The kernel $k_A$ satisfies \eqref{ mnbvcxdszaxdcfvbnm,.poijuhgf}.
\item[(ii)] The columns of $A$ are divergence free in the sense of distributions, namely,
\[
\sum_{i=1}^N\partial_i A_{ij}=0\quad\mbox{ in }\mathcal D'(\R^N), \quad\mbox{ for } j=1,\ldots,N.
\]
\end{enumerate}

Moreover, if either~$(i)$ or~$(ii)$ holds true,
there exists~$C>0$, depending only on~$N$, $s$, and~$\|A\|_{L^\infty(\R^N,\R^{N\times N})}$, such that
\[
|k_A(x,y)|\le\frac{C}{|x-y|^{N+2s}}\quad\mbox{ for every }x,y\in\R^N, x\ne y.
\]

Finally, if $N=1$, then either~$(i)$ or~$(ii)$ implies that $A$ is constant.
\end{theorem}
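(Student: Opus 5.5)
The plan is to extract the leading-order term of $k_A(x,y)$ as $|x|\to+\infty$ with $y$ fixed, starting from formula~\eqref{CHARKE}, and to show that this term is a multiple of a distributional divergence of $A$.

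Fix $y$ and set $R:=|x|$. Split the integral in \eqref{CHARKE} into three regions: $z$ near $x$ (say $|z-x|<R/2$), $z$ near $y$ (say $|z-y|<R/2$), and the remaining region. On the far region both factors are of size $R$, so its contribution is $O(R^{-N-2s})$.

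On the region near $x$, write $z-y=(x-y)+(z-x)$. The piece with $z-x$ gives a bounded quantity times $\int_{B_{R/2}(x)}|z-x|^{1-N-s-1}\,dz\cdot R^{-N-s-1}=O(R^{-N-2s})$. The main piece is the principal value of $\int (z-x)\cdot A(z)e/|z-x|^{N+s+1}\,dz$ with $e$ close to $(x-y)/|x-y|$, multiplied by $R^{-N-s}$. This is a truncated $D^s$-type quantity of $A$ at $x$, and it should be uniformly bounded in $x$ by the Dini condition together with the boundedness of $A$; the tail beyond $R/2$ is again $O(R^{-s})$. Hence this region contributes $O(R^{-N-s})$ and is not small in general.

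The region near $y$ behaves the same way with the roles exchanged, and its main term is
\[
\frac{(y-x)}{|y-x|^{N+s+1}}\cdot \PV\!\!\int_{B_{R/2}(y)}\frac{A(z)(z-y)}{|z-y|^{N+s+1}}\,dz ,
\]
which tends, as $R\to\infty$, to $-|x|^{-N-s}\,\hat x\cdot d_{N,s}^{-1}\,(D^s\!\cdot A)(y)$ up to constants. Here $(D^s\!\cdot A)_j(y):=d_{N,s}\,\PV\!\int (A(z)-A(y))_{ij}(z-y)_i/|z-y|^{N+s+1}\,dz$ is the fractional divergence of the columns of $A$, and the constant $A(y)$ drops out by symmetry.

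Combining the three regions,
\[
|x|^{N+s}k_A(x,y)=c\,\bigl(\hat x\cdot V(y)+W(x,\hat x)\bigr)+o(1),
\]
where $V(y)$ is the fractional divergence at $y$ and $W$ comes from the $x$-region. The main obstacle is the $x$-region term $W$. Condition (i) must hold for all $y$, while $W$ does not depend on $y$ except through $\hat e\to\hat x$. So (i) would force $\hat x\cdot V(y)$ to be independent of $y$ along sequences. I would use Theorem~\ref{ESPONENTS}-type averaging to exclude a nonzero constant, or note that $W(x,\hat x)$ is the same expression $\hat x\cdot V(x)$ with opposite sign, so that (i) reads $\hat x\cdot(V(y)-V(x))\to0$.

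To conclude $V\equiv0$, I would pair with test functions: $\int V\varphi=\langle A,D^s\varphi\rangle$, with averaging in $y$. The fractional divergence of $A$ vanishes if and only if $\operatorname{div}A=0$ distributionally, since $D^s\cdot=\operatorname{div}\circ I_{1-s}$ and the Riesz potential commutes with derivatives. This gives the equivalence of (i) and (ii).

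For the bound $|k_A|\le C|x-y|^{-N-2s}$ under (ii), the $O(R^{-N-s})$ terms vanish exactly. This uses the identity
\[
\int\frac{A(z)(z-x)}{|z-x|^{N+s+1}}\,dz=0 \quad \text{whenever } \operatorname{div}A=0,
\]
obtained by integrating by parts against $|z-x|^{1-N-s}$. The remaining pieces were already shown to be $O(|x-y|^{-N-2s})$; small distances are handled by splitting at scale $|x-y|$ uniformly.

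Finally, when $N=1$, $A'=0$ in $\mathcal D'$ forces $A$ to be constant.
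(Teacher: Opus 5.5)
Your overall architecture is the paper's. The splitting at scale $|x-y|/2$ is Proposition~\ref{ASYPR}, which gives
\[
k_A(x,y)=-\frac{d_{N,s}^2}{2|x-y|^{N+s}}\big(V(x)-V(y)\big)\cdot\frac{x-y}{|x-y|}+O\big(|x-y|^{-N-2s}\big),\qquad V(x):=\PV\int_{\R^N}\frac{w\cdot A(w+x)}{|w|^{N+s+1}}\,dw .
\]
Your treatment of (ii)$\Rightarrow$(i), of the global bound, and of $N=1$ matches the paper; the paper makes your integration by parts rigorous through the vanishing flux of $A_j$ across spheres.

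The implication (i)$\Rightarrow$(ii), however, is not established.

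First, your claim that the $x$-region term is uniformly bounded in $x$ is unfounded. Condition \eqref{mkojihguyftdreswxrdcftvgybhjn} is only assumed on each compact set, so $|V(x)|$ is controlled only by $C\big(\int_0^1\omega_{A,\overline{B_1(x)}}(r)\,r^{-1-s}\,dr+\|A\|_{L^\infty(\R^N,\R^{N\times N})}\big)$, which may blow up as $|x|\to\infty$. Comparing two base points $y,y'$ at the same large $x$ therefore leaves the cross term $V(x)\cdot\big(\frac{x-y}{|x-y|}-\frac{x-y'}{|x-y'|}\big)$, of size $|V(x)|\,|y-y'|/|x|$, which you cannot discard. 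So ``(i) forces $\hat x\cdot V(y)$ to be independent of $y$'' does not follow.

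What (i) rigorously gives is weaker. Take $x=y+Re$, so the direction is exactly $e$, and compare base points $y$ and $y+te$ on the same line, as the paper does. This yields only $(V(y+te)-V(y))\cdot e=0$, i.e.\ $V(x)=a+Bx$ with $B$ antisymmetric. For such $V$ one has $(V(x)-V(y))\cdot(x-y)=0$, so the $|x-y|^{-N-s}$ term vanishes identically. Hence no information on the kernel, averaged in the spirit of Theorem~\ref{ESPONENTS} or not, can exclude it.

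The missing ingredient is a Liouville-type step that goes back from $V$ to $A$. The paper uses $\widehat{V_j}=C_{N,s}|\xi|^{s-1}\widehat{\operatorname{div}A_j}$ on $\R^N\setminus\{0\}$. Since $V$ is affine, $\widehat{V}$ is supported at the origin, so $\widehat{\operatorname{div}A_j}$ is too, and $\operatorname{div}A_j$ is a polynomial. Testing against $\varphi(\cdot/R)$ and using boundedness of $A$ gives $|\langle\operatorname{div}A_j,\varphi(\cdot/R)\rangle|\le CR^{N-1}$, which forces the polynomial to vanish.

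Your pairing identity $\int V\varphi=\langle A,D^s\varphi\rangle$ (up to a constant) is the right kind of object, but it must be used with dilations, not averaging in $y$. With $\varphi_R=\varphi(\cdot/R)$, the left side grows like $R^N$ or $R^{N+1}$ for a nonzero affine $V$. The right side is at most $\|A\|_{L^\infty}R^{N-s}\|D^s\varphi\|_{L^1}$, which gives $V\equiv0$. You would then still need injectivity of $|\xi|^{s-1}$ modulo polynomials to pass from $V\equiv0$ to $\operatorname{div}A=0$.
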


We point out that the two equivalent statements in Theorem~\ref{CARASS} are of a different nature. 
Somehow the divergence free condition kills the contribution of order $|x-y|^{-N-s}$ and yields the stronger global estimate
\[
|k_A(x,y)|\le\frac{C}{|x-y|^{N+2s}}.
\]
Thus, for kernels of the form \eqref{CHARKE}, the divergence free property of $A(x)$ characterizes the vanishing of the term of order $|x-y|^{-N-s}$.

We emphasize that no nonnegativity assumption on $k_A(x,y)$ is required for this characterization. This is in contrast with Theorem~\ref{NONNETH} below, where the global nonnegativity of the kernel is used to obtain a lower bound for the quadratic form. Here, instead, the characterization relies on the specific representation \eqref{CHARKE}.

\subsection{A negative answer and instability under perturbations}
We finally investigate whether an affirmative answer to Open Problem~\ref{opeSPJMDPKM} is stable under perturbations of the matrix~$A(x)$.
One might expect that, if the question has an affirmative answer for a given matrix~$A(x)$, then the same remains true for matrices sufficiently close to $A(x)$. The result below shows that this is not the case, even under perturbations converging uniformly, together with all their derivatives.

We point out that ths instability is closely related to the previous analysis of the kernel at infinity.  In dimension $N=1$, the divergence free condition is
equivalent to $A$ being a constant matrix. For a constant matrix, the kernel has decay of order $|x-y|^{-1-2s}$. In the example below, an arbitrarily small nonconstant perturbation produces instead a contribution of order~$|x-y|^{-1-s}$. More precisely, along the pairs $(x,0)$ with $x\to+\infty$, this contribution is negative, and hence the corresponding kernel cannot be nonnegative.

Thus, although this result concerns a different question from \cite[{Open Problem 1.10}]{MR3819528}, it shows that arbitrarily small smooth perturbations of a constant coefficient matrix
can change the behavior of the leading term of the kernel at infinity and ``destroy" its nonnegativity. More precisely, we have the following:
\begin{theorem}\label{SORP}
Let $N=1$. Then, there exist a smooth, bounded and uniformly elliptic matrix~$A(x)$ for which Open Problem~\ref{opeSPJMDPKM} has
an affirmative answer, and a sequence of smooth, bounded and uniformly elliptic matrices $A_n$ for which Open Problem~\ref{opeSPJMDPKM} has a negative answer, such that
\[
\lim_{n\to+\infty}d(A_n,A)=0.
\]
\end{theorem}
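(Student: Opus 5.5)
The plan is to take $A\equiv a_0$ a positive constant $1\times1$ matrix. Then, by Corollary~\ref{COINCIDONOCOST} with $N=1$ and $\tr(A)=a_0$, the kernel is
\[
k_A(x,y)=\frac{c_{1,s}}{4s|x-y|^{1+2s}}\bigl((1+2s)a_0-a_0\bigr)=\frac{c_{1,s}\,a_0}{2|x-y|^{1+2s}}.
\]
This kernel is positive, symmetric, and satisfies \eqref{srdtfyguioplèòlkoijhugyftr1} and \eqref{srdtfyguioplèòlkoijhugyftr2} with $\alpha=\eta=2s$. So Open Problem~\ref{opeSPJMDPKM} has an affirmative answer for this $A$. (Equivalently, $D^s$ in one dimension gives $\int |D^su|^2=\|(-\Delta)^{s/2}u\|^2$.)

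For the perturbations I would set $A_n(x):=a_0+\frac1n\phi(x)$, where $\phi\in C_c^\infty(\R)$ is fixed and nonconstant, and $d$ is any distance controlling uniform convergence of all derivatives, for example $\sum_k 2^{-k}\min\{1,\|\partial^k(A_n-A)\|_{L^\infty}\}$. Then $d(A_n,A)\to0$, and for $n$ large $A_n$ is smooth, bounded and uniformly elliptic, and satisfies \eqref{mkojihguyftdreswxrdcftvgybhjn}. Suppose some nonnegative symmetric $k$ answered Open Problem~\ref{opeSPJMDPKM} for $A_n$. Then $k\in L^1_{loc}(\R^{2}\setminus\Delta)$, so Theorem~\ref{thmreg2} forces $k=k_{A_n}$ a.e., the kernel given by \eqref{CHARKE}. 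It therefore suffices to show that $k_{A_n}$ is negative on a set of positive measure. By linearity of \eqref{CHARKE} in $A$,
\[
k_{A_n}=k_{a_0}+\tfrac1n k_\phi,
\]
where $k_{a_0}=O(|x-y|^{-1-2s})$.

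The key step is the asymptotics of $k_\phi(x,y)$ for $y$ in a bounded set and $x\to+\infty$. Since $\phi$ has compact support in $B_R$, for $|x|\gg R$ the $z$-integrand near $x$ vanishes. Writing $z-x\approx -x$ and $|z-x|^{-2-s}\approx |x|^{-2-s}$, I expect
\[
k_\phi(x,y)=-\frac{d_{1,s}^2}{2}\left(-\frac{\operatorname{sgn}(x)}{|x|^{1+s}}\,\PV\!\int_\R \phi(z)\frac{z-y}{|z-y|^{2+s}}\,dz+O(|x|^{-2-s})\right).
\]
Up to a constant, $\PV\int\phi(z)\frac{z-y}{|z-y|^{2+s}}dz$ is $-D^s\phi(y)$, because $\int (z-y)|z-y|^{-2-s}dz=0$ in the principal value sense. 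So the $|x|^{-1-s}$ coefficient is a nonzero multiple of $\operatorname{sgn}(x)\,D^s\phi(y)$, and this dominates $k_{a_0}=O(|x|^{-1-2s})$. This is consistent with Theorem~\ref{CARASS}, since $\phi'\ne0$ means $A_n$ is not divergence free.

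Finally, $\phi$ nonconstant in $C_c^\infty$ gives $D^s\phi\not\equiv0$: its Fourier transform $i\xi|\xi|^{s-1}\hat\phi$ is nonzero. By continuity, $D^s\phi$ has a fixed sign on a small interval $J$. Choosing the side $x\to+\infty$ or $x\to-\infty$ appropriately makes the leading term negative. Hence for each $n$ there is $X_n$ with $k_{A_n}(x,y)<0$ for all $y\in J$ and all $x$ on that side with $|x|>X_n$, which contradicts nonnegativity.

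I expect the main obstacle to be making the expansion rigorous and uniform in $y\in J$. This means controlling the principal value at $z=y$ jointly with the expansion in $1/x$, using that $\phi$ is smooth and the excluded ball $B_\varepsilon(x)$ is irrelevant once $|x|>R+1$, and showing that the remainder is $O(|x|^{-2-s})$ uniformly in $y\in J$. A Taylor expansion of $(z-x)|z-x|^{-2-s}$ in $z$ on $\supp\phi$ should handle this.
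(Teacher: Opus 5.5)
Your argument is correct. It reaches the conclusion by a different route from the paper's, both in the choice of perturbation and in the asymptotic analysis.

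The paper takes $A\equiv1$ and $A_n=1-\frac{\tanh x}{n}$. Working along the line $y=0$, it proves (Proposition~\ref{PROgsz}, via Lemmata~\ref{PROgsz1}--\ref{PROgsz3}) that $x^{1+s}\PV\int_\R\frac{(z-x)z\tanh z}{|z-x|^{2+s}|z|^{2+s}}\,dz\to-\int_\R\frac{z\tanh z}{|z|^{2+s}}\,dz$. Because $\tanh$ is not compactly supported, it must also treat the principal value at $z=x$ (Lemma~\ref{PROgsz2}) and the far region separately.

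Your compactly supported $\phi$ removes the singularity at $z=x$ once $|x|$ exceeds the support. A first-order Taylor expansion on $\supp\phi$ then suffices, and it exposes the structure: the $|x|^{-1-s}$ coefficient is a multiple of $\operatorname{sgn}(x)\,D^s\phi(y)$. This is the term $V(x)-V(y)$ of Proposition~\ref{ASYPR} with $V(x)\to0$. The paper's limit is the same object, since $\int_\R\frac{z\tanh z}{|z|^{2+s}}\,dz$ is exactly the defining integral of $d_{1,s}^{-1}D^s(\tanh)(0)$. (Incidentally, the principal value you write equals $+D^s\phi(y)/d_{1,s}$, not $-D^s\phi(y)$; this is immaterial because you choose the side.)

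What each choice buys:
\begin{itemize}
\item The paper's perturbation gives an explicit sign on $x\to+\infty$ with no further argument. You instead need $D^s\phi\not\equiv0$ and must choose between $x\to+\infty$ and $x\to-\infty$.
\item Your construction is more general: any nonzero bump works, including one that leaves $A$ unchanged outside a compact set.
\item Your remainder bound is uniform in $y\in J$, so it gives $k_{A_n}<0$ on a set of positive measure. That is exactly what contradicts the a.e.\ identification of Theorem~\ref{thmreg2}. The paper's sign computation on the null set $\{y=0\}$ tacitly relies on the continuity of $k_{A_n}$ off the diagonal, which follows from the locally uniform convergence in Lemma~\ref{lemmareg1}.
\end{itemize}

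Finally, convergence in the paper's metric $d$ is immediate for your sequence, since $p_k(A_n-A)=p_k(\phi)/n$.
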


\subsection{Organization of the paper}
The paper is organized as follows. In Section \ref{0-pkr32pvb8mn7Jhb398fklsOdDTHSE} we provide the proof of Theorems~\ref{0-pkr32pvb8mn7Jhb398fklsOdDTH} and \ref{sxdfcghvbjknlmò,à.,lmknjbhvgcfdxszxfcgvbhn}. In Section \ref{kojhygftrdeswaq	QAWSEDRFTGYUHJ} we introduce some preliminary results which lead to the proof of Theorem \ref{thmreg2} and we provide the example of an explicit class of admissible matrices.
The proofs of Theorems~\ref{ESPONENTS}, \ref{LOCALETRACE}, \ref{thmaltroopenproblem}, \ref{CARASS} and \ref{SORP} are presented in Sections~\ref{è+òplkojihugy87ft6r58ed4ws34aszxrdcft}, \ref{èòkmnjibhuyctxdsza<azsxdcfv}, \ref{poi98u76t5r4e3jhgfrde}, \ref{èòplokijhugyftrdeswaqASDFG} and \ref{l,kmojnbhugvycfdxsza<zsexdcv}, respectively.
Appendix~\ref{mkjnbhgvfcdxrsezaesxrdctfvyguhoplòè'0o98765}
contains the proof of Corollary~\ref{COINCIDONOCOST}.

\section{Proof of Theorems~\ref{0-pkr32pvb8mn7Jhb398fklsOdDTH} and \ref{sxdfcghvbjknlmò,à.,lmknjbhvgcfdxszxfcgvbhn}}\label{0-pkr32pvb8mn7Jhb398fklsOdDTHSE}
In this section we combine arguments from potential theory and Fourier analysis to detect the
singularity exponent of the interaction kernel and establish Theorems~\ref{0-pkr32pvb8mn7Jhb398fklsOdDTH} and \ref{sxdfcghvbjknlmò,à.,lmknjbhvgcfdxszxfcgvbhn}. We further show that a lower bound of the form $c|x-y|^{-N-\gamma}$, with $c>0$, for $|x-y|>1$,  implies~$\gamma\ge 2s$ (see Theorem \ref{NONNETH}).

We first present a useful auxiliary estimate on a suitable reminder:

\begin{lemma}\label{PHILA}
Let~$\omega\subset\R^N$ be a unit vector and~$\Gamma\in(1,+\infty)$. Given~$\phi\in C^\infty_c(\R^N)$, let
$$\phi_\Gamma(x) := \phi(x) \cos(\Gamma \omega \cdot x)$$
and
$$ R_\Gamma(x):=D^s \phi_\Gamma(x)+\Gamma^s \omega\phi_\Gamma(x)
=D^s \phi_\Gamma(x)+\Gamma^s \omega\phi(x) \cos(\Gamma \omega \cdot x)
.$$

Then,
$$\|R_\Gamma\|_{L^2(\R^N)}\le \frac{C_{N,\phi}}{\Gamma^{1-s}},$$
for some~$C_{N,\phi}>0$.
\end{lemma}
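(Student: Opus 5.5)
The plan is to work entirely on the Fourier side, using that the Riesz fractional gradient is the Fourier multiplier
\[
\widehat{D^s u}(\xi)=i\,\xi\,|\xi|^{s-1}\,\widehat u(\xi)
\]
(the normalization $d_{N,s}$ in~\eqref{DNSDEFN} is chosen exactly for this). Write
\[
\phi_\Gamma=\tfrac12\bigl(\phi\,e^{i\Gamma\omega\cdot x}+\phi\,e^{-i\Gamma\omega\cdot x}\bigr),
\]
so that $\widehat{\phi_\Gamma}(\xi)=\tfrac12\bigl(\widehat\phi(\xi-\Gamma\omega)+\widehat\phi(\xi+\Gamma\omega)\bigr)$. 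Each piece is concentrated near $\pm\Gamma\omega$, where the symbol is almost constant, equal to $\pm i\Gamma^s\omega$.

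\textbf{Step 1: identify the principal part.} Freezing the symbol at $\xi=\pm\Gamma\omega$ gives the function whose transform is
\[
\tfrac{i\Gamma^s\omega}{2}\bigl(\widehat\phi(\xi-\Gamma\omega)-\widehat\phi(\xi+\Gamma\omega)\bigr).
\]
I would compute this carefully with the paper's sign conventions for $D^s$ and the Fourier transform. The result is $\Gamma^s\omega$ times $\phi$ times a trigonometric factor in $\Gamma\omega\cdot x$. This factor is exactly the term subtracted in the definition of $R_\Gamma$ (up to the phase convention; with the multiplier above it is $-\Gamma^s\omega\,\phi\sin(\Gamma\omega\cdot x)$, mirroring $\nabla\cos(\Gamma\omega\cdot x)=-\Gamma\omega\sin(\Gamma\omega\cdot x)$). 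By Plancherel, $\|R_\Gamma\|_{L^2}$ is then bounded by a sum of two symmetric terms. After the change of variables $\eta=\xi\mp\Gamma\omega$, each has the form
\[
\Bigl\|\bigl(m(\eta+\Gamma\omega)-m(\Gamma\omega)\bigr)\widehat\phi(\eta)\Bigr\|_{L^2_\eta},\qquad m(\xi):=\xi|\xi|^{s-1}.
\]

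\textbf{Step 2: split the frequency region.} On $\{|\eta|\le\Gamma/2\}$ the segment from $\Gamma\omega$ to $\Gamma\omega+\eta$ stays at distance at least $\Gamma/2$ from the origin. The map $m$ is smooth there with $|Dm(\zeta)|\le C|\zeta|^{s-1}$, so the mean value theorem gives
\[
|m(\eta+\Gamma\omega)-m(\Gamma\omega)|\le C\,\Gamma^{s-1}|\eta| .
\]
This part contributes at most $C\Gamma^{s-1}\,\||\eta|\widehat\phi\|_{L^2}=C\Gamma^{s-1}\|\nabla\phi\|_{L^2}$. On $\{|\eta|>\Gamma/2\}$ I would use the crude bound
\[
|m(\eta+\Gamma\omega)|+|m(\Gamma\omega)|\le C\bigl(|\eta|^s+\Gamma^s\bigr)\le C|\eta|^s
\]
(valid since $\Gamma\le 2|\eta|$). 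Together with the rapid decay of $\widehat\phi$ (as $\phi\in C^\infty_c$), this gives a contribution of order $\Gamma^{-M}$ for any $M$, in particular at most $C\Gamma^{s-1}$ for $\Gamma>1$.

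\textbf{Step 3: conclude.} Summing the two regions and the two signs yields $\|R_\Gamma\|_{L^2}\le C_{N,\phi}\Gamma^{s-1}$, with a constant depending only on $N$, $s$ and finitely many Sobolev norms of $\phi$.

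The main obstacle is not the estimate, which is routine, but Step 1. The multiplier representation of $D^s$ and the exact phase of the leading oscillating term must be pinned down with the paper's conventions, so that the subtracted term in $R_\Gamma$ matches the frozen-symbol term exactly. Otherwise a residual of size $\Gamma^s$ would survive. I would check this by testing on a Gaussian $\phi$, or via the known identity $D^s=\nabla(-\Delta)^{(s-1)/2}$.
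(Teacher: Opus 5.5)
Your Fourier-side argument is the paper's proof. Both split $\phi_\Gamma$ into $\tfrac12\phi\,e^{\pm i\Gamma\omega\cdot x}$ and shift $\eta=\xi\mp\Gamma\omega$. On $B_{\Gamma/2}$ both use the mean value bound $C\Gamma^{s-1}|\eta|$. Outside $B_{\Gamma/2}$ both use the crude bound $C|\eta|^s$ together with the decay of $\widehat\phi$; the paper uses $|\widehat\phi(\eta)|^2\le C(1+|\eta|^{N+4})^{-1}$ and gets $C\Gamma^{2s-4}$ there, which is the same as your $\Gamma^{-M}$. Your formula $\widehat{\phi_\Gamma}(\xi)=\tfrac12\bigl(\widehat\phi(\xi-\Gamma\omega)+\widehat\phi(\xi+\Gamma\omega)\bigr)$ is the correct version of what the paper writes (it has $\widehat{\phi_\Gamma}$ where $\widehat\phi$ is meant).

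What you must fix is Step 1. You cannot leave the match with $R_\Gamma$ as holding ``up to the phase convention''. There is no such freedom.
\begin{itemize}
\item $D^s$ is defined in physical space by a real, odd, translation-invariant kernel. Its symbol is therefore odd and purely imaginary, whatever Fourier normalization you use.
\item Hence $D^s$ maps $\cos(\Gamma\omega\cdot x)$ to a real multiple of $\sin(\Gamma\omega\cdot x)$. With multiplier $i\xi|\xi|^{s-1}$, the frozen-symbol term is exactly $-\Gamma^s\omega\,\phi\sin(\Gamma\omega\cdot x)$, as you computed.
\end{itemize}

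Consequently the lemma as printed, with $+\Gamma^s\omega\,\phi\cos(\Gamma\omega\cdot x)$, is false for every $\phi\not\equiv0$. The leftover term $\Gamma^s\omega\,\phi\,\bigl(\cos(\Gamma\omega\cdot x)-\sin(\Gamma\omega\cdot x)\bigr)$ has squared $L^2$ norm $\Gamma^{2s}\int\phi^2\bigl(1-\sin(2\Gamma\omega\cdot x)\bigr)\,dx$. By Riemann--Lebesgue this equals $\Gamma^{2s}\bigl(\|\phi\|_{L^2}^2+o(1)\bigr)$, which is not $O(\Gamma^{s-1})$. This is precisely the ``residual of size $\Gamma^s$'' you anticipated.

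The intended statement is
\[
R_\Gamma:=D^s\phi_\Gamma+\Gamma^s\omega\,\phi\sin(\Gamma\omega\cdot x).
\]
This is what the paper's proof actually treats: it introduces $\psi_\Gamma=\phi\sin(\Gamma\omega\cdot x)$ and estimates $\mathcal F\bigl(D^s\phi_\Gamma+\Gamma^s\omega\psi_\Gamma\bigr)$. It is also what the proof of Theorem~\ref{0-pkr32pvb8mn7Jhb398fklsOdDTH} uses, where one writes $D^s\phi_\Gamma=-\Gamma^s\omega\,\phi\sin(\Gamma\omega\cdot x)+R_\Gamma$.

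So state the corrected lemma explicitly instead of deferring the check to a Gaussian test. With that correction your Steps 2--3 give a complete proof, identical in substance to the paper's.
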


\begin{proof} We observe that
$$2\phi_\Gamma(x)=\phi_{\Gamma,+}(x)+\phi_{\Gamma,-}(x),$$
where~$\phi_{\Gamma,\pm}(x):=\phi(x)\, e^{\pm i\Gamma \omega \cdot x}$.

Therefore, since\footnote{Throughout the paper, given~$u\in\mathcal{ S}(\R^N )$, we denote its Fourier transform by either~$\widehat{u}$ or~$\mathcal{F}(u)$ and adopt the following normalization: $$
\widehat{u}(\xi) := (2\pi)^{-\frac{N}2}\int_{\R^N}
u(x) e^{-ix\cdot\xi}\, dx. $$
With this convention, Plancherel's identity reads as
$$\int_{\R^N}u(x)\overline{v(x)}\,dx=\int_{\R^N}\widehat{u}(\xi)
\overline{\widehat{v}(\xi)}\,d\xi.$$
Moreover,
$$\widehat{D^su}(\xi)=i\xi|\xi|^{s-1}\widehat{u}(\xi).$$}
$$\widehat{\phi_{\Gamma,\pm}}(\xi)=
(2\pi)^{-\frac{N}{2}}\int_{\R^N}
\phi(x) \,e^{-ix\cdot(\xi\mp\Gamma \omega)} dx=
\widehat{\phi_\Gamma}(\xi\mp\Gamma \omega),$$
we conclude that
\begin{equation}\label{KLASSolkr}
2\widehat{\phi_\Gamma}(x)=\widehat{\phi_{\Gamma,+}}(\xi)+\widehat{\phi_{\Gamma,-}}(\xi)=
\widehat{\phi_\Gamma}(\xi-\Gamma \omega)+
\widehat{\phi_\Gamma}(\xi+\Gamma \omega).
\end{equation}

On a similar note, setting
$$\psi_\Gamma:=\phi(x) \sin(\Gamma \omega \cdot x),$$
we have that~$2i\psi_\Gamma(x)=\phi_{\Gamma,+}-\phi_{\Gamma,-}$
and consequently
$$2i\widehat{\psi_\Gamma}(\xi)=\widehat{\phi_{\Gamma,+}}(\xi)-\widehat{\phi_{\Gamma,-}}(\xi)
=\widehat{\phi_\Gamma}(\xi-\Gamma \omega)-\widehat{\phi_\Gamma}(\xi+\Gamma \omega).$$

This, in tandem with~\eqref{KLASSolkr}, returns that
\begin{eqnarray*}
\frac{2\widehat{R_\Gamma}(\xi)}i&=& 
\frac{2}{i}\,{\mathcal{F}}\big( D^s \phi_\Gamma 
+\Gamma^s \omega \psi_\Gamma\big)(\xi)\\&=&
2\xi|\xi|^{s-1} \widehat{\phi_\Gamma}(\xi) 
-2i\Gamma^s \omega \widehat{\psi_\Gamma}(\xi)\\&=&
\xi|\xi|^{s-1}\widehat{\phi_\Gamma}(\xi+\Gamma \omega)+
\xi|\xi|^{s-1}\widehat{\phi_\Gamma}(\xi-\Gamma \omega)
+\Gamma^s \omega 
\widehat{\phi_\Gamma}(\xi+\Gamma \omega)
-\Gamma^s \omega \widehat{\phi_\Gamma}(\xi-\Gamma \omega)
\\&=&\Big(\xi|\xi|^{s-1}+\Gamma^s \omega\Big)\widehat{\phi_\Gamma}(\xi+\Gamma \omega)+
\Big(\xi|\xi|^{s-1}-\Gamma^s \omega\Big)\widehat{\phi_\Gamma}(\xi-\Gamma \omega).
\end{eqnarray*}
As a result,
\begin{equation}\label{45POJLS}\begin{split}
2\|\widehat{R_\Gamma}\|_{L^2(\R^N)}\le&
\sqrt{\;\int_{\R^N}
\Big|\xi|\xi|^{s-1}+\Gamma^s \omega\Big|^2|\widehat{\phi_\Gamma}(\xi+\Gamma \omega)|^2\,d\xi}
\\&+\sqrt{\;\int_{\R^N}
\Big|\xi|\xi|^{s-1}-\Gamma^s \omega\Big|^2|\widehat{\phi_\Gamma}(\xi-\Gamma \omega)|^2\,d\xi}.
\end{split}\end{equation}

Now we point out that
\begin{equation}\label{0ujo32r04386jf3nw23}\begin{split}&
\int_{\R^N}
\Big|\xi|\xi|^{s-1}\pm\Gamma^s \omega\Big|^2|\widehat{\phi_\Gamma}(\xi\pm\Gamma \omega)|^2\,d\xi=
\int_{\R^N}
\Big|(\eta\mp\Gamma \omega)|\eta\mp\Gamma \omega|^{s-1}\pm\Gamma^s \omega\Big|^2|\widehat{\phi_\Gamma}(\eta)|^2\,d\eta\\&\qquad\le
C\int_{\R^N}
\Big|(\eta\mp\Gamma \omega)|\eta\mp\Gamma \omega|^{s-1}\pm\Gamma^s \omega\Big|^2
\frac{d\eta}{1+|\eta|^{N+4}},\end{split}\end{equation}
for some~$C>0$, that we will take the liberty of renaming from line to line.

It is also helpful to remark that, when~$\eta\in B_{\Gamma/2}$ we have that, for all~$t\in[0,1]$,
$$ |t\eta+\Gamma \omega|\in\left[\Gamma-|\eta|,\,|\eta|+\Gamma\right]\subset
\left[\frac\Gamma2,\,\frac{3\Gamma}2\right]$$
and therefore
\begin{eqnarray*}
&&\Big|(\eta+\Gamma \omega)|\eta+\Gamma \omega|^{s-1}-\Gamma^s \omega\Big|=
\left| \int_0^1 \frac{d}{dt}\Big[ (t\eta+\Gamma \omega)|t\eta+\Gamma \omega|^{s-1}\Big]\,dt\right|\\
&&\qquad=\left| \int_0^1 \eta|t\eta+\Gamma \omega|^{s-1}
+(s-1)(t\eta+\Gamma \omega)|t\eta+\Gamma \omega|^{s-3}(t\eta+\Gamma \omega)\cdot\eta
\,dt\right|\le C\Gamma^{s-1}|\eta|.
\end{eqnarray*}

On this account,
\begin{equation*}
\int_{B_{\Gamma/2}}
\Big|(\eta+\Gamma \omega)|\eta+\Gamma \omega|^{s-1}-\Gamma^s \omega\Big|^2
\frac{d\eta}{1+|\eta|^{N+4}}\le
C\Gamma^{2s-2}
\int_{B_{\Gamma/2}}
\frac{|\eta|^2\,d\eta}{1+|\eta|^{N+4}}\le C\Gamma^{2s-2}.
\end{equation*}
Analogously, by swapping~$\omega$ with~$-\omega$,
\begin{equation*}
\int_{B_{\Gamma/2}}
\Big|(\eta-\Gamma \omega)|\eta-\Gamma \omega|^{s-1}+\Gamma^s \omega\Big|^2
\frac{d\eta}{1+|\eta|^{N+4}}\le C\Gamma^{2s-2}.
\end{equation*}
We can thereby summarize the above information by writing
\begin{equation}\label{0ujo32r04386jf3nw2}
\int_{B_{\Gamma/2}}
\Big|(\eta\mp\Gamma \omega)|\eta\mp\Gamma \omega|^{s-1}\pm\Gamma^s \omega\Big|^2
\frac{d\eta}{1+|\eta|^{N+4}}\le C\Gamma^{2s-2}.
\end{equation}

Furthermore,
\begin{eqnarray*}
\Big|(\eta\mp\Gamma \omega)|\eta\mp\Gamma \omega|^{s-1}\pm\Gamma^s \omega\Big|\le
|\eta\mp\Gamma \omega|^s+\Gamma^s\le
(|\eta|+\Gamma )^s+\Gamma^s\le2
(|\eta|+\Gamma )^s,\end{eqnarray*}leading to
\begin{eqnarray*}
&&\int_{\R^N\setminus B_{\Gamma/2}}
\Big|(\eta\mp\Gamma \omega)|\eta\mp\Gamma \omega|^{s-1}\pm\Gamma^s \omega\Big|^2
\frac{d\eta}{1+|\eta|^{N+4}}\le 4\int_{\R^N\setminus B_{\Gamma/2}}
(|\eta|+\Gamma )^{2s}
\frac{d\eta}{1+|\eta|^{N+4}}\\&&\qquad\le 4\int_{\R^N\setminus B_{\Gamma/2}}
(3|\eta| )^{2s}
\frac{d\eta}{|\eta|^{N+4}}\le C\Gamma^{2s-4}.
\end{eqnarray*}

We now plug this and~\eqref{0ujo32r04386jf3nw2} into~\eqref{0ujo32r04386jf3nw23} and we gather that
\begin{equation*}
\int_{\R^N}
\Big|\xi|\xi|^{s-1}\pm\Gamma^s \omega\Big|^2|\widehat{\phi_\Gamma}(\xi\pm\Gamma \omega)|^2\,d\xi\le C\Gamma^{2s-2}.
\end{equation*}
This and~\eqref{45POJLS} yield that~$\|\widehat{R_\Gamma}\|_{L^2(\R^N)}\le C\Gamma^{s-1}$.
The desired result now follows from Plancherel's identity.
\end{proof}

With the preliminary work presented above, we can now proceed with the proof of Theorem~\ref{0-pkr32pvb8mn7Jhb398fklsOdDTH}.

\begin{proof}[Proof of Theorem~\ref{0-pkr32pvb8mn7Jhb398fklsOdDTH}] 
Let $\omega\in\mathbb S^{N-1}$ be a unit vector. Let $\phi\in C^\infty_c(\R^N)$ with~$\phi\not\equiv0$ and $\Gamma>1$. We set
\begin{equation}\label{klmjnhbgvfctdxrszeaw<ESRDTFYGHUJ}
\phi_\Gamma(x):=\phi(x)\cos(\Gamma\omega\cdot x).
\end{equation}

We first prove that there exists $\Gamma_1>1$ such that, for every~$\Gamma\ge\Gamma_1$,
\begin{equation}\label{,mnbhjgvcfdxszeasexdfcgvhb}
\frac{\lambda_A}{16}\|\phi\|_{L^2(\R^N)}^2\Gamma^{2s}\le\int_{\R^N} A(x)D^s\phi_\Gamma(x)\cdot D^s\phi_\Gamma(x)\,dx\le \Lambda_A
\left(\frac{\sqrt3}{2}+\frac14\right)^2 \|\phi\|_{L^2(\R^N)}^2 \Gamma^{2s}.
\end{equation}
Indeed, recalling Lemma \ref{PHILA}, we write
\begin{equation}\label{,lkmjnhbgvfctdxrszea<sxdrcfvgbhnjkml,ò.}
D^s\phi_\Gamma(x) = -\Gamma^s\omega\phi(x)\sin(\Gamma\omega\cdot x) + R_\Gamma(x),
\end{equation}
where
\begin{equation}\label{òèplkoijhugyftdres5dtygh0odei9wfu8gy}
\|R_\Gamma\|_{L^2(\R^N)}\le\frac{C_{N,\phi}}{\Gamma^{1-s}}.
\end{equation}
Since $\phi^2\in L^1(\R^N)$, by the Riemann--Lebesgue theorem there exists $\Gamma_0>1$ such that, for every~$\Gamma\ge\Gamma_0$,
\begin{equation}\label{RLUytrewertyuikm xzsdfghj}
\left|\;\int_{\R^N}\phi^2(x)e^{2i\Gamma\omega\cdot x} dx\right|\le\frac12\|\phi\|_{L^2(\R^N)}^2.
\end{equation}
Thus, since
\[
\begin{split}
\int_{\R^N}\phi^2(x)\sin^2(\Gamma\omega\cdot x)\,dx&=\frac12\int_{\R^N}\phi^2(x) \,dx-\frac12\int_{\R^N}\phi^2(x)\cos(2\Gamma\omega\cdot x)\,dx\\
&=\frac12 \|\phi\|^2_{L^2(\R^N)}-\frac12\Re\left(\;\int_{\R^N}\phi^2(x)e^{2i\Gamma\omega\cdot x}\, dx\right)
\end{split}
\]
we infer that, for every $\Gamma\ge\Gamma_0$,
\begin{equation}\label{mkjnihgufderxswzsxdrcfvgbhnj}
\frac14\|\phi\|_{L^2(\R^N)}^2 \le\int_{\R^N}\phi^2(x)\sin^2(\Gamma\omega\cdot x)\,dx \le \frac34\|\phi\|_{L^2(\R^N)}^2.
\end{equation}
Accordingly, for every $\Gamma\ge\Gamma_0$,
\[
\frac{\|\phi\|_{L^2(\R^N)}}{2}\Gamma^s\le
\left(\;\int_{\R^N }\Gamma^{2s}|\omega|^2\phi^2(x)\sin^2(\Gamma\omega\cdot x)\,dx\right)^{\frac12}
\le\frac{\sqrt{3}\,\|\phi\|_{L^2(\R^N)}}{2}\Gamma^s.
\]

We now set
\[
\Gamma_1:=\max\left\{\Gamma_0,\frac{4C_{N,\phi}}{\|\phi\|_{L^2(\R^N)}}\right\}.
\]
Then, by \eqref{òèplkoijhugyftdres5dtygh0odei9wfu8gy} we infer that, for every $\Gamma\ge\Gamma_1$,
\[
\|R_\Gamma\|_{L^2(\R^N)}\le \frac{C_{N,\phi}\Gamma^{s}}{\Gamma_1}\le\frac14\|\phi\|_{L^2(\R^N)}\Gamma^s.
\]
Accordingly, recalling \eqref{,lkmjnhbgvfctdxrszea<sxdrcfvgbhnjkml,ò.}, we deduce that, for every~$\Gamma\ge\Gamma_1$,
\[
\|D^s\phi_\Gamma\|_{L^2(\R^N)}\le\left(\;\int_{\R^N }\Gamma^{2s}|\omega|^2\phi^2(x)\sin^2(\Gamma\omega\cdot x)\,dx\right)^{\frac12} + \|R_\Gamma\|_{L^2(\R^N)}\le
\left(\frac{\sqrt3}{2}+\frac14\right)\|\phi\|_{L^2(\R^N)}\Gamma^s\]
and
\[ \|D^s\phi_\Gamma\|_{L^2(\R^N)} \ge \left(\;\int_{\R^N }\Gamma^{2s}|\omega|^2\phi^2(x)\sin^2(\Gamma\omega\cdot x)\,dx\right)^{\frac12} - \|R_\Gamma\|_{L^2(\R^N)}\ge\frac14\|\phi\|_{L^2(\R^N)}\Gamma^s.
\]
By using these inequalities together with \eqref{uniformlyellipticdefn}, we obtain~\eqref{,mnbhjgvcfdxszeasexdfcgvhb} for every~$\Gamma\ge\Gamma_1$.

We now write
\begin{equation}\label{,lòkmjngvycftdxrszeadxfcvghbjnkm,}
\iint_{\R^{2N}}(\phi_\Gamma(x)-\phi_\Gamma(y))^2k(x,y) \,dx\, dy =
\mathcal I_{1, \Gamma} + \mathcal I_{2, \Gamma},
\end{equation} where
\begin{eqnarray*} \mathcal I_{1, \Gamma}&:=&
 \iint_{\{|x-y|<1\}}(\phi_\Gamma(x)-\phi_\Gamma(y))^2k(x,y) \,dx\, dy\\
{\mbox{and }}\qquad  \mathcal I_{2, \Gamma}&:=&
 \iint_{\{|x-y|\ge1\}}(\phi_\Gamma(x)-\phi_\Gamma(y))^2k(x,y) \,dx \,dy
\end{eqnarray*}
By \eqref{srdtfyguioplèòlkoijhugyftr1}, and setting $h:=x-y$, we have
\begin{equation}\label{nearineq}
\lambda\iint_{\R^N\times B_1}\frac{|\phi_\Gamma(x)-\phi_\Gamma(x+h)|^2}{|h|^{N+\alpha}}\, dh\, dx \le \mathcal I_{1, \Gamma} \le\Lambda\iint_{\R^N\times B_1}\frac{|\phi_\Gamma(x)-\phi_\Gamma(x+h)|^2}{|h|^{N+\alpha}} \,dh\, dx.
\end{equation}

We recall \eqref{klmjnhbgvfctdxrszeaw<ESRDTFYGHUJ} and write
\[
\phi_\Gamma(x)-\phi_\Gamma(x+h) = I_\Gamma(x,h)+J_\Gamma(x,h),
\]
where
\[
I_\Gamma(x,h):=\big(\phi(x)-\phi(x+h)\big)\cos(\Gamma\omega\cdot(x+h))
\]
and
\begin{eqnarray*}
J_\Gamma(x,h)&:=&\phi(x)\Big(\cos(\Gamma\omega\cdot x) - \cos(\Gamma\omega\cdot(x+h))\Big) \\&=& 2\phi(x)\sin\left(\frac{\Gamma\omega\cdot h}{2}\right)
\sin\left(\Gamma\omega\cdot x+ \frac{\Gamma\omega\cdot h}{2}\right).
\end{eqnarray*}

By the mean value theorem, we have
\[
|\phi(x)-\phi(x+h)|
\le
\|\nabla\phi\|_{L^\infty(\R^N)}|h|.
\]
Moreover, if $h\in B_1$, then $\phi(x)-\phi(x+h)$ can be nonzero only if $x\in\mbox{supp}(\phi)+B_1$. Therefore, 
\begin{equation}\label{jnhbgvfcdxrszeaxdcfvgb}
\iint_{\R^N\times B_1}\frac{|I_\Gamma(x,h)|^2}{|h|^{N+\alpha}} \,dh\, dx\le\|\nabla\phi\|_{L^\infty(\R^N)}^2|\mbox{supp}(\phi)+B_1|\int_{B_1}|h|^{2-N-\alpha}\, dh=:C_I.
\end{equation}

Furthermore, for every $h\in B_1$,
\[
\int_{\R^N}\phi^2(x)\sin^2\left(\Gamma\omega\cdot x+\frac{\Gamma\omega\cdot h}{2}\right) dx = \frac12\|\phi\|_{L^2(\R^N)}^2 - \frac12\Re\left(e^{i\Gamma\omega\cdot h}\int_{\R^N}\phi^2(x)e^{2i\Gamma\omega\cdot x} dx\right).
\]
Hence, thanks to~\eqref{RLUytrewertyuikm xzsdfghj},
for every~$\Gamma\ge\Gamma_1$ we see that
\begin{equation*}
\frac14\|\phi\|_{L^2(\R^N)}^2\le\int_{\R^N}\phi^2(x)\sin^2\left(\Gamma\omega\cdot x+\frac{\Gamma\omega\cdot h}{2}\right) dx\le\frac34\|\phi\|_{L^2(\R^N)}^2.
\end{equation*}
Accordingly, setting \[
\varpi_\Gamma:=\int_{B_1}\sin^2\left(\frac{\Gamma\omega\cdot h}{2}\right)\frac{dh}{|h|^{N+\alpha}},
\]
we find that
\begin{equation}\label{hgfdreswaqAWESRDTFYGUHIJOKPLò}
\|\phi\|_{L^2(\R^N)}^2\varpi_\Gamma\le\iint_{\R^N\times B_1}\frac{|J_\Gamma(x,h)|^2}{|h|^{N+\alpha}} dh dx\le 3\|\phi\|_{L^2(\R^N)}^2\varpi_\Gamma.
\end{equation}

Now, by the change of variable $z:=\Gamma h$,
\[
\varpi_\Gamma = \Gamma^\alpha\int_{B_\Gamma}\sin^2\left(\frac{\omega\cdot z}{2}\right)\frac{dz}{|z|^{N+\alpha}}.
\]
Thus, by defining
\[
a_{\alpha,\omega}:=\int_{B_1}\sin^2\left(\frac{\omega\cdot z}{2}\right)\frac{dz}{|z|^{N+\alpha}}\qquad\mbox{ and }\qquad b_{\alpha,\omega}:=\int_{\R^N}
\sin^2\left(\frac{\omega\cdot z}{2}\right)\frac{dz}{|z|^{N+\alpha}},
\]
since $\alpha\in (0, 2)$, we get 
$$0<a_{\alpha,\omega}\Gamma^\alpha\le \varpi_\Gamma\le
 b_{\alpha,\omega}\Gamma^\alpha<+\infty.$$
 Combining this information with \eqref{hgfdreswaqAWESRDTFYGUHIJOKPLò}, we deduce that, for every $\Gamma\ge\Gamma_1$,
\begin{equation*}
\|\phi\|_{L^2(\R^N)}^2a_{\alpha,\omega}\,\Gamma^\alpha\le\iint_{\R^N\times B_1}\frac{|J_\Gamma(x,h)|^2}{|h|^{N+\alpha}} \,dh \,dx\le 3\|\phi\|_{L^2(\R^N)}^2b_{\alpha,\omega}\Gamma^\alpha.
\end{equation*}

From this and~\eqref{jnhbgvfcdxrszeaxdcfvgb}, we obtain that
\begin{equation}\label{uppernjkbhgvcfdxsrzr}
\begin{split}
&\iint_{\R^N\times B_1}\frac{|\phi_\Gamma(x)-\phi_\Gamma(x+h)|^2}{|h|^{N+\alpha}} \,dh \,dx=\iint_{\R^N\times B_1}\frac{|I_\Gamma(x,h)+J_\Gamma(x,h)|^2}{|h|^{N+\alpha}}\, dh\, dx\\
&\qquad\le 2\iint_{\R^N\times B_1}\frac{|I_\Gamma(x,h)|^2}{|h|^{N+\alpha}} \,dh \,dx + 2\iint_{\R^N\times B_1}\frac{|J_\Gamma(x,h)|^2}{|h|^{N+\alpha}}\, dh\, dx\\
&\qquad\le 2C_I+6\|\phi\|_{L^2(\R^N)}^2b_{\alpha,\omega}\Gamma^\alpha\\
&\qquad\le 2\left(C_I+3\|\phi\|_{L^2(\R^N)}^2b_{\alpha,\omega}\right)\Gamma^\alpha
\end{split}
\end{equation} and that
\begin{equation}\label{jnbugvyftcdrxsewxrdcftvgybhjnkm}
\begin{split}
\iint_{\R^N\times B_1}\frac{|\phi_\Gamma(x)-\phi_\Gamma(x+h)|^2}{|h|^{N+\alpha}} \,dh\, dx&\ge\frac12\int_{\R^N}\int_{B_1}\frac{|J_\Gamma(x,h)|^2}{|h|^{N+\alpha}} \,dh\, dx - \int_{\R^N}\int_{B_1}\frac{|I_\Gamma(x,h)|^2}{|h|^{N+\alpha}}\, dh\, dx\\
&\ge \frac12\|\phi\|_{L^2(\R^N)}^2a_{\alpha,\omega}\,\Gamma^\alpha-C_I,
\end{split}
\end{equation}
where the inequality~$|a+b|^2\ge\frac12|b|^2-|a|^2$, for every $a, b\in\R$, has been used.

We now set
\[
\Gamma_2:=\max\left\{\Gamma_1,\,\left(\frac{4C_I}{\|\phi\|_{L^2(\R^N)}^2a_{\alpha,\omega}}\right)^{1/\alpha}\right\}.
\] In this way,
\[
C_I\le\frac14 \|\phi\|_{L^2(\R^N)}^2a_{\alpha,\omega} \Gamma^\alpha,
\] and therefore,
combining this with \eqref{jnbugvyftcdrxsewxrdcftvgybhjnkm}, we get
\begin{equation}\label{lowerjnkbhgvcfhvbjknlm,}
\iint_{\R^N\times B_1}\frac{|\phi_\Gamma(x)-\phi_\Gamma(x+h)|^2}{|h|^{N+\alpha}} \,dh\, dx\ge\frac14
\|\phi\|_{L^2(\R^N)}^2a_{\alpha,\omega} \Gamma^\alpha.
\end{equation}

From~\eqref{uppernjkbhgvcfdxsrzr} and~\eqref{lowerjnkbhgvcfhvbjknlm,},
and recalling \eqref{nearineq}, it follows that, for every $\Gamma\ge\Gamma_2$,
\begin{equation}\label{kmjnhbgvfcdtxrszea<ezsxrdctfvgbh}
\frac{1}4 \lambda\|\phi\|_{L^2(\R^N)}^2a_{\alpha,\omega}\Gamma^\alpha\le \mathcal I_{1, \Gamma} \le2\Lambda\left(C_I+3\|\phi\|_{L^2(\R^N)}^2b_{\alpha,\omega}\right)\Gamma^\alpha.
\end{equation}

We now estimate $\mathcal I_{2,\Gamma}$.  By \eqref{kmljnbhgvcfdxcfvgbh}, we get
\[
|\mathcal I_{2,\Gamma}|\le M \iint_{\{|x-y|\ge1\}}\frac{(\phi_\Gamma(x)-\phi_\Gamma(y))^2}{|x-y|^{N+\eta}} \,dx\, dy.
\]
Thus, setting $h:=y-x$,
\begin{equation}\label{hgfctdrsewaesdrtfgybhujnkml,}
\begin{split}&
|\mathcal I_{2,\Gamma}|\le 2M\int_{\{|h|\ge1\}}\frac1{|h|^{N+\eta}}\left(\;\int_{\R^N}\left(\phi_\Gamma^2(x)+\phi_\Gamma^2(x+h)\right)\, dx\right)\, dh \\&\qquad=
4M\|\phi_\Gamma\|_{L^2(\R^N)}^2 \int_{\{|h|\ge1\}}\frac{dh}{|h|^{N+\eta}}
\le 4M\|\phi\|_{L^2(\R^N)}^2\int_{\{|h|\ge1\}}\frac{dh}{|h|^{N+\eta}}=:
\widetilde C\in(0,+\infty).
\end{split}
\end{equation}

From \eqref{,lòkmjngvycftdxrszeadxfcvghbjnkm,},\eqref{kmjnhbgvfcdtxrszea<ezsxrdctfvgbh} and \eqref{hgfctdrsewaesdrtfgybhujnkml,}, we obtain that
\begin{equation}\label{vm8oewuti43i9876543w-y65}\begin{split}&
\frac{1}4
\lambda\|\phi\|_{L^2(\R^N)}^2 a_{\alpha,\omega} \Gamma^\alpha-\widetilde C\le\iint_{\R^{2N}}(\phi_\Gamma(x)-\phi_\Gamma(y))^2k(x,y)\,dx\,dy\\&\qquad\qquad\le
2\Lambda\left(C_I+3\|\phi\|_{L^2(\R^N)}^2b_{\alpha,\omega}\right)\Gamma^\alpha+\widetilde C .
\end{split}\end{equation}

We now set
\[
\Gamma_3:=\max\left\{\Gamma_2,\,\left(\frac{8\widetilde C}{\lambda\|\phi\|_{L^2(\R^N)}^2a_{\alpha,\omega}}\right)^{1/\alpha}\right\}.
\] In this way,
for every $\Gamma\ge\Gamma_3$,
\[
\widetilde C\le\frac{1}8
\lambda\|\phi\|_{L^2(\R^N)}^2a_{\alpha,\omega} \Gamma^\alpha.
\]
Accordingly, we have that
\[
\frac{\lambda\|\phi\|_{L^2(\R^N)}^2a_{\alpha,\omega}}{8}\Gamma^\alpha\le\iint_{\R^{2N}}(\phi_\Gamma(x)-\phi_\Gamma(y))^2k(x,y)\,dx\,dy.
\]

From this and~\eqref{vm8oewuti43i9876543w-y65}, we conclude that,
for every $\Gamma\ge\Gamma_3$,
\begin{equation}\label{kjnhbgfctdxrszea<zesxdrcftvgbhn}
\begin{split}&
\frac{1}8\lambda\|\phi\|_{L^2(\R^N)}^2a_{\alpha,\omega}\Gamma^\alpha\le\iint_{\R^{2N}}(\phi_\Gamma(x)-\phi_\Gamma(y))^2k(x,y) \,dx\, dy\\
&\qquad\qquad\le \left(2\Lambda(C_I+3\|\phi\|_{L^2(\R^N)}^2b_{\alpha,\omega})+\widetilde C\right)\Gamma^\alpha
.\end{split}
\end{equation}

Then, for every $\Gamma\ge\Gamma_3$, identity~\eqref{OJS02irkt-3}, together with~\eqref{,mnbhjgvcfdxszeasexdfcgvhb} and \eqref{kjnhbgfctdxrszea<zesxdrcftvgbhn}, yields
\begin{equation}\label{mlkjnhbgftdrxsezazsxrdcf1}
\frac{\lambda_A}{16}\|\phi\|_{L^2(\R^N)}^2\Gamma^{2s-\alpha}\le 2\Lambda\left(C_I+3\|\phi\|_{L^2(\R^N)}^2b_{\alpha,\omega}\right)+\widetilde C
\end{equation}
and
\begin{equation}\label{mlkjnhbgftdrxsezazsxrdcf2}
\frac18\lambda\|\phi\|_{L^2(\R^N)}^2a_{\alpha,\omega}\Gamma^{\alpha-2s}\le\Lambda_A\left(\frac{\sqrt3}{2}+\frac14\right)^2 \|\phi\|_{L^2(\R^N)}^2.
\end{equation}
Now, if $\alpha<2s$, then
\[
\lim_{\Gamma\to+\infty} \frac{\lambda_A}{16}\|\phi\|_{L^2(\R^N)}^2\Gamma^{2s-\alpha}=+\infty,
\]
which contradicts \eqref{mlkjnhbgftdrxsezazsxrdcf1}. If instead~$\alpha>2s$, then
\[
\lim_{\Gamma\to+\infty} \frac{\lambda\|\phi\|_{L^2(\R^N)}^2a_{\alpha,\omega}}{8}\Gamma^{\alpha-2s}=+\infty,
\]
which contradicts \eqref{mlkjnhbgftdrxsezazsxrdcf2}. Hence, necessarily~$\alpha=2s$, as desired.
\end{proof}

We now provide the proof of Theorem \ref{sxdfcghvbjknlmò,à.,lmknjbhvgcfdxszxfcgvbhn}.

\begin{proof}[Proof of Theorem \ref{sxdfcghvbjknlmò,à.,lmknjbhvgcfdxszxfcgvbhn}]
Let $\phi\in C_c^\infty(\R^N)$ with~$\phi\not\equiv0$ and~$R\ge 1$,
and define
\[
u_R(x):=\phi\left(\frac{x}{R}\right).
\]
Since the fractional gradient is homogeneous of degree $s$, we see that
\[
D^s u_R(x) = R^{-s} (D^s\phi)\left(\frac{x}{R}\right).
\]
Accordingly, by using the change of variables $X:=x/R$, we obtain that
\[
\|D^s u_R\|_{L^2(\R^N)}^2 = R^{-2s} \int_{\R^N}\left|(D^s\phi)\left(\frac{x}{R}\right)\right|^2\,dx = R^{N-2s}\|D^s\phi\|_{L^2(\R^N)}^2.
\]
Since $\phi\not\equiv 0$, we get $\|D^s\phi\|_{L^2(\R^N)}>0$. Hence, by \eqref{uniformlyellipticdefn}, 
\begin{equation}\label{swderftgyhujikopijhugyftdre}
\int_{\R^N} A(x)D^s u_R(x)\cdot D^s u_R(x)\,dx\ge\lambda_A\|D^s u_R\|_{L^2(\R^N)}^2 = \lambda_A R^{N-2s}\|D^s\phi\|_{L^2(\R^N)}^2>0.
\end{equation}

For any~$z\in\R^N$, we define
\[
G(z):=\int_{\R^N}|\phi(x+z)-\phi(x)|^2\,dx
\]
and we claim that there exists a constant $C_\phi>0$, depending only on $\phi$, such that
\begin{equation}\label{hbgftcdrxsazwsexdcfvbhjnm}
G(z)\le C_\phi \min\{|z|^2,1\}\quad\mbox{ for every } z\in\R^N.
\end{equation}
Indeed,
\[
G(z) \leq 2
\int_{\R^N}\left(|\phi(x+z)|^2+|\phi(x)|^2\right)\,dx
= 4\|\phi\|_{L^2(\R^N)}^2.
\]
Moreover, by the fundamental theorem of calculus,
\[
\phi(x+z)-\phi(x) = \int_0^1 z\cdot\nabla\phi(x+tz)\,dt
\] and
accordingly, by Minkowski's integral inequality,
\[
G(z)\le\int_0^1 |z|^2 \|\nabla\phi(\cdot+tz)\|^2_{L^2(\R^N)}\,dt = |z|^2\|\nabla\phi\|_{L^2(\R^N)}^2.
\]
From these observations \eqref{hbgftcdrxsazwsexdcfvbhjnm} follows.

Furthermore, for every~$h\in\R^N$, using the change of variable~$X:=x/R$,
\begin{equation}\label{mkjnhbugvfcderxswzxrdctfvgbhn}
\int_{\R^N}
|u_R(x+h)-u_R(x)|^2\,dx =R^N \int_{\R^N}\left|\phi\left(X+\frac{h}{R}\right)-\phi(X)\right|^2 dX = R^N G\left(\frac{h}{R}\right).
\end{equation}

We now set
\[
E_{R, 1}:=\iint_{\{|x-y|\le 1\}} (u_R(x)-u_R(y))^2 k(x,y)\,dx\,dy.
\]
By \eqref{mlnjkbhgvcfdxszexdcfgvhbjnkml,} and~\eqref{mkjnhbugvfcderxswzxrdctfvgbhn}, we get
\begin{eqnarray*}
E_{R, 1}&\le&\Lambda\iint_{\{|x-y|\le 1\}}\frac{|u_R(x)-u_R(y)|^2}{|x-y|^{N+2s}}\, dx\, dy\\
&\le&\Lambda\int_{\{|h|\le 1\}}\frac{1}{|h|^{N+2s}}\left(\;\int_{\R^N}|u_R(x+h)-u_R(x)|^2\,dx\right)\,dh\\
&=&\Lambda R^N\int_{\{|h|\le 1\}} G\left(\frac{h}{R}\right)\frac{dh}{|h|^{N+2s}}.
\end{eqnarray*}
Now, since $|h|/R\le 1$, from \eqref{hbgftcdrxsazwsexdcfvbhjnm} we infer that
\[
G\left(\frac{h}{R}\right)\le C_\phi \frac{|h|^2}{R^2},
\] therefore
\begin{equation}\label{jhgvcfvguhbijnlkòlàojpihugyguhpij}
E_{R, 1}\le C_\phi\Lambda R^{N-2}\int_{\{|h|\le 1\}}|h|^{2-N-2s}\,dh = C_\phi\Lambda R^{N-2} \omega_{N-1}\int_0^1 r^{1-2s}\,dr
\le C R^{N-2},
\end{equation} for some~$C=C(\phi, \Lambda, N, s)>0$.

We also set
\[
E_{R, 2}:=\iint_{\{|x-y|\ge 1\}} (u_R(x)-u_R(y))^2 k(x,y)\,dx\,dy.
\]
By \eqref{kmljnbhgvcfdxcfvgbhBIS} and~\eqref{mkjnhbugvfcderxswzxrdctfvgbhn},
\begin{eqnarray*}
E_{R, 2}&\le& M \iint_{\{|x-y|\ge 1\}}\frac{|u_R(x)-u_R(y)|^2}{|x-y|^{N+\eta}}\,dx\,dy\\&=&
M \int_{\{|h|>1\}} \frac{1}{|h|^{N+\eta}}\left(\;\int_{\R^N} |u_R(x+h)-u_R(x)|^2\,dx\right)\,dh\\&=& M R^N\int_{\{|h|>1\}} G\left(\frac{h}{R}\right)
\frac{dh}{|h|^{N+\eta}}
.
\end{eqnarray*}
By using the change of variable~$z:=h/R$ and recalling~\eqref{hbgftcdrxsazwsexdcfvbhjnm}, we thus have that
\begin{equation}\label{dfguhijokljihugyfderctfvgbhnjmk}
\begin{split}
E_{R, 2}&\le M R^{N-\eta} \int_{\{|z|>R^{-1}\}} G(z)|z|^{-N-\eta} \,dz\\
&= M R^{N-\eta}\left(\;\int_{\{R^{-1}<|z|\le 1\}} G(z)|z|^{-N-\eta}\, dz + \int_{\{|z|>1\}}G(z)|z|^{-N-\eta}\, dz \right)\\
&\le C_\phi M R^{N-\eta}\left(\;\int_{\{R^{-1}<|z|\le 1\}} |z|^{2-N-\eta} \,dz + \int_{\{|z|>1\}} |z|^{-N-\eta}\, dz \right)
\end{split}
\end{equation}

We observe that
\[
\int_{R^{-1}<|z|\le 1}|z|^{2-N-\eta}\,dz =\omega_{N-1} \int_{1/R}^1 r^{1-\eta}\,dr\le 
\begin{cases}
\dfrac{\omega_{N-1}}{2-\eta} &\quad\mbox{ if } 0<\eta<2,\\
\omega_{N-1}\log R &\quad\mbox{ if } \eta=2\\
\dfrac{\omega_{N-1} R^{\eta-2}}{\eta-2} &\quad\mbox{ if } \eta>2.
\end{cases}
\]
Hence, using this information into~\eqref{dfguhijokljihugyfderctfvgbhnjmk}, we infer that there exists $C=C(N, \eta, M, \phi)>0$ such that
\begin{equation}\label{kmjnbhgvfcdxszasxdcfhnjm}
E_{R, 2}\le C
\begin{cases}
R^{N-\eta} &\quad\mbox{ if } 0<\eta<2,\\[2mm]
R^{N-2}(1+\log R) &\quad\mbox{ if } \eta=2,\\[2mm]
R^{N-2} &\quad\mbox{ if } \eta>2.
\end{cases}
\end{equation}

Now, we assume by contradiction that $\eta>2s$. From \eqref{OJS02irkt-3}, we have that
\[
\int_{\R^N} A(x)D^s u_R(x)\cdot D^s u_R(x)\,dx = E_{R, 1} + E_{R, 2}
.
\]
Combining the latter inequality with \eqref{swderftgyhujikopijhugyftdre}, we deduce that
\[
\lambda_A \|D^s\phi\|_{L^2(\R^N)}^2 R^{N-2s}\le E_{R, 1} + E_{R, 2}.
\]

We now distinguish all the possible values of $\eta$. If $2s<\eta<2$, then, by \eqref{jhgvcfvguhbijnlkòlàojpihugyguhpij} and \eqref{kmjnbhgvfcdxszasxdcfhnjm}, we have
\[
\lambda_A \|D^s\phi\|_{L^2(\R^N)}^2 R^{N-2s}\le C R^{N-2} + C R^{N-\eta}.
\]
Dividing by $R^{N-2s}$, we get
\[
0<\lambda_A \|D^s\phi\|_{L^2(\R^N)}^2\le C R^{2s-2} + C R^{2s-\eta}.
\] By sending~$R\to+\infty$ we obtain a contradiction.

If $\eta=2$, then, by \eqref{jhgvcfvguhbijnlkòlàojpihugyguhpij} and \eqref{kmjnbhgvfcdxszasxdcfhnjm}, and dividing by $R^{N-2s}$, we obtain that
\[ 0<
\lambda_A \|D^s\phi\|_{L^2(\R^N)}^2\le C R^{2s-2}(1+\log R),
\] which gives a contradiction by sending~$R\to+\infty$.

Finally, if $\eta>2$,  then, by \eqref{jhgvcfvguhbijnlkòlàojpihugyguhpij} and~\eqref{kmjnbhgvfcdxszasxdcfhnjm}, and dividing by $R^{N-2s}$, we get
\[
0<\lambda_A \|D^s\phi\|_{L^2(\R^N)}^2\le C R^{2s-2},
\] from which we obtain a contradiction by sending~$R\to+\infty$.

From these considerations, we deduce that necessarily~$\eta\le 2s$, as desired. \end{proof}

We now show the following counterpart of Theorem~\ref{sxdfcghvbjknlmò,à.,lmknjbhvgcfdxszxfcgvbhn}.

\begin{theorem}\label{NONNETH}
Let $s\in(0,1)$ and $A:\R^N\to\R^{N\times N}$ be bounded and measurable. Let  $k_A:\R^{2N}\setminus\Delta\to[0,+\infty)$ be measurable and satisfy~\eqref{OJS02irkt-3} and
\begin{equation}\label{DASOTTO}
k_A(x,y)\ge\frac{c}{|x-y|^{N+\gamma}}\quad\mbox{ for } |x-y|>1, 
\end{equation}
for some~$\gamma\in\R$ and $c>0$.

Then,  $\gamma\ge 2s$.
\end{theorem}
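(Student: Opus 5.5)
We argue by contradiction: assume $\gamma<2s$ and test \eqref{OJS02irkt-3} with the dilations $u_R(x):=\phi(x/R)$, $\phi\in C^\infty_c(\R^N)$, $\phi\not\equiv0$, $R\ge1$, exactly as in the proof of Theorem~\ref{sxdfcghvbjknlmò,à.,lmknjbhvgcfdxszxfcgvbhn}. The scaling runs the other way here. Boundedness of $A$ gives an upper bound
\[
\int_{\R^N} A(x)D^s u_R\cdot D^s u_R\,dx\le \|A\|_{L^\infty(\R^N,\R^{N\times N})}\,R^{N-2s}\|D^s\phi\|_{L^2(\R^N)}^2 .
\]
The nonnegativity and the tail lower bound \eqref{DASOTTO} of $k_A$ should force the right-hand side to grow like $R^{N-\gamma}$, which is faster than $R^{N-2s}$.

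Since $k_A\ge0$ everywhere, the right-hand side of \eqref{OJS02irkt-3} is at least its restriction to $\{|x-y|>1\}$. By \eqref{DASOTTO} and \eqref{mkjnhbugvfcderxswzxrdctfvgbhn}, followed by the change of variables $z=h/R$,
\[
\iint_{\R^{2N}}(u_R(x)-u_R(y))^2k_A\,dx\,dy\ \ge\ c\,R^N\int_{\{|h|>1\}}G\!\left(\frac hR\right)\frac{dh}{|h|^{N+\gamma}}
= c\,R^{N-\gamma}\int_{\{|z|>1/R\}}\frac{G(z)}{|z|^{N+\gamma}}\,dz .
\]
Here $G(z)=\int_{\R^N}|\phi(x+z)-\phi(x)|^2dx$. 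For $R\ge1$ the last integral is at least $\int_{\{|z|>1\}}G(z)|z|^{-N-\gamma}\,dz$, so we need this quantity to be positive and finite.

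Positivity is the key point. For $|z|$ larger than the diameter of $\supp\phi$, the supports of $\phi(\cdot+z)$ and $\phi$ are disjoint, so $G(z)=2\|\phi\|_{L^2(\R^N)}^2>0$. Hence the integral over $\{|z|>1\}$ is positive, whether finite or $+\infty$.

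If $\gamma\le0$ the integral is $+\infty$, since $G$ equals a positive constant near infinity. Then the right-hand side of \eqref{OJS02irkt-3} is infinite while the left-hand side is finite, which is already a contradiction (take $R=1$). If $0<\gamma<2s$, the integral is a finite positive constant $c_\phi$, because $G\le4\|\phi\|^2_{L^2(\R^N)}$. Combining the two bounds gives
\[
c\,c_\phi\,R^{N-\gamma}\le \|A\|_{L^\infty(\R^N,\R^{N\times N})}\|D^s\phi\|_{L^2(\R^N)}^2R^{N-2s}.
\]
Dividing by $R^{N-2s}$ yields $c\,c_\phi R^{2s-\gamma}\le C$, which is impossible as $R\to+\infty$. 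Hence $\gamma\ge2s$.

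There is no serious obstacle. The only care needed is where nonnegativity is used: it lets us discard the near-diagonal region and the region inside $B_1$, for which \eqref{DASOTTO} gives no information. Uniform ellipticity is not needed; the upper bound on the energy uses only $\|A\|_{L^\infty(\R^N,\R^{N\times N})}$.
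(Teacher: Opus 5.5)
Your proposal is correct and follows essentially the paper's argument: a dilated test function, the upper bound $\|A\|_{L^\infty(\R^N,\R^{N\times N})}\,R^{N-2s}\|D^s\phi\|_{L^2(\R^N)}^2$ from boundedness of $A$ and homogeneity of $D^s$, and a lower bound of order $R^{N-\gamma}$ obtained from nonnegativity of $k_A$ together with the tail bound. The only difference is in how the lower bound is extracted. The paper restricts to a product of two far-apart balls $B_{r/2}(x_0)\times B_{r/2}(y_0)$, where $(u(x)-u(y))^2=1$ and $|x-y|\in(3r,5r)$, which yields a finite bound $\overline c\,r^{N-\gamma}$ for every $\gamma\in\R$ at once. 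You instead integrate over the whole region $\{|x-y|>1\}$ via $G$, so you must treat $\gamma\le0$ separately because the tail integral diverges there, and you do this correctly.
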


\begin{proof}
Let $x_0,y_0\in\R^N$ be such that $\frac{|x_0-y_0|}{4}=:r>1$.  We first observe that, if $(x,y)\in B_{r/2}(x_0)\times B_{r/2}(y_0)$,  then
\begin{equation}\label{LONTS}\begin{split}&
|x-y|=|(x_0-y_0)+(x-x_0)+(y-y_0)|\ge|x_0-y_0|-|x-x_0|-|y-y_0|\\&\qquad\qquad\qquad>
4r-\frac{r}2-\frac{r}2=3r>1\end{split}
\end{equation}
and
\begin{equation}\label{cnjaewrt7834tihuSDFGHJ54}
|x-y|\le |x_0-y_0|+|x-x_0|+|y-y_0|<5r.
\end{equation}

Let $u_0\in C^\infty_c(B_1)$ be such that $u_0=1$ in $B_{1/2}$. We define $u(x):=u_0\left(\frac{x-y_0}{r}\right)$.  We observe that $u\in C^\infty_c(B_r(y_0))$,  $u=1$ in $B_{r/2}(y_0)$, and $u=0$ in $B_{r/2}(x_0)$. Consequently, if $(x,y)\in B_{r/2}(x_0)\times B_{r/2}(y_0)$, then $(u(x)-u(y))^2=1$. Thus, by~\eqref{DASOTTO} and~\eqref{LONTS},
we obtain that
\begin{equation}\label{PKSMDxPJMDOJe3f13:S}
\begin{split}
&\iint_{\R^{2N}}(u(x)-u(y))^2 k_A(x,y)\,dx\,dy\ge 
\iint_{B_{r/2}(x_0)\times B_{r/2}(y_0)}(u(x)-u(y))^2 k_A(x,y)\,dx\,dy\\
&\qquad=
\iint_{B_{r/2}(x_0)\times B_{r/2}(y_0)}k_A(x,y)\,dx\,dy\ge c\iint_{B_{r/2}(x_0)\times B_{r/2}(y_0)}\frac{dx\,dy}{|x-y|^{N+\gamma}}.
\end{split}
\end{equation}
Therefore,
thanks to~\eqref{cnjaewrt7834tihuSDFGHJ54},
\begin{equation}\label{nhbgvfcdxsza<zsxdcfvgbhjnkml,.ò}
\iint_{\R^{2N}}(u(x)-u(y))^2 k_A(x,y) dx dy\ge c\, (5r)^{-N-\gamma}|B_{r/2}(x_0)| |B_{r/2}(y_0)| =\overline c r^{N-\gamma},
\end{equation}
where
$$\overline c:=\frac{c\,\omega^2_{N-1}}{2^{2N} 5^{N+\gamma}}.$$

Moreover,  since the fractional gradient is homogeneous of degree $s$,
\[
D^s u(x) = \frac1{r^s} D^s u_0\left(\frac{x-y_0}{r}\right).
\]
Accordingly,
\[
\begin{split}
&\int_{\R^N} A(x)D^s u(x)\cdot D^s u(x) dx\le\|A\|_{L^\infty(\R^N, \R^{N\times N})}\int_{\R^N}|D^s u(x)|^2\,dx\\
&=\frac{\|A\|_{L^\infty(\R^N, \R^{N\times N})}}{r^{2s}}\int_{\R^N}\left|D^s u_0\left(\frac{x-y_0}{r}\right)\right|^2 dx = C r^{N-2s},
\end{split}
\]
where
$$C:=\|A\|_{L^\infty(\R^N, \R^{N\times N})}\int_{\R^N}|D^s u_0(x)|^2 \,dx.$$
Notice in particular that $C>0$ is independent of $r$. 

Thus, combining the latter inequality with \eqref{OJS02irkt-3} and \eqref{nhbgvfcdxsza<zsxdcfvgbhjnkml,.ò}, we obtain that~$\frac{\overline c}{C}r^{2s-\gamma}\le1$. Since~$r>1$ can be taken arbitrarily large, we necessarily have~$\gamma\ge2s$, as desired.
\end{proof}

Combining Theorems \ref{sxdfcghvbjknlmò,à.,lmknjbhvgcfdxszxfcgvbhn} and \ref{NONNETH}, we have the following results:

\begin{corollary}\label{cor1tails}
Let $s\in(0,1)$ and let $A:\R^N\to\R^{N\times N}$ be bounded and measurable and satisfy~\eqref{uniformlyellipticdefn}.
Let $k_A:\R^{2N}\setminus\Delta\to[0,+\infty)$ be measurable and satisfy~\eqref{OJS02irkt-3}, \eqref{mlnjkbhgvcfdxszexdcfgvhbjnkml,}
and
\[
\frac{c}{|x-y|^{N+\gamma}}\le k_A(x,y)\le\frac{M}{|x-y|^{N+\eta}},\qquad |x-y|>1,
\] for some~$\eta>0$, $\gamma\in\R$ and $c, M>0$,

Then,  $\eta\le2s\le\gamma$.
\end{corollary}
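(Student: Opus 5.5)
This corollary follows by applying Theorem~\ref{sxdfcghvbjknlmò,à.,lmknjbhvgcfdxszxfcgvbhn} and Theorem~\ref{NONNETH} separately to the same kernel $k_A$ and putting the two conclusions together.

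For the inequality $\eta\le 2s$ we check the hypotheses of Theorem~\ref{sxdfcghvbjknlmò,à.,lmknjbhvgcfdxszxfcgvbhn}. The matrix $A$ is bounded, measurable and satisfies~\eqref{uniformlyellipticdefn}. The kernel $k=k_A$ is measurable on $\R^{2N}\setminus\Delta$ and satisfies the near-diagonal upper bound~\eqref{mlnjkbhgvcfdxszexdcfgvhbjnkml,}. The upper half of the two-sided tail assumption, $k_A(x,y)\le M|x-y|^{-N-\eta}$ for $|x-y|>1$, is exactly~\eqref{kmljnbhgvcfdxcfvgbhBIS}, with $\eta>0$ and $M>0$. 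Finally,~\eqref{OJS02irkt-3} is assumed. The theorem therefore gives $0<\eta\le 2s$.

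For the inequality $2s\le\gamma$ we apply Theorem~\ref{NONNETH}. That theorem needs $A$ bounded and measurable, $k_A$ measurable with values in $[0,+\infty)$, the representation~\eqref{OJS02irkt-3}, and the lower bound~\eqref{DASOTTO} for $|x-y|>1$ with some $\gamma\in\R$ and $c>0$. All of these are among the hypotheses: nonnegativity is built into the codomain of $k_A$, and~\eqref{DASOTTO} is the lower half of the two-sided tail estimate. The theorem then gives $\gamma\ge 2s$.

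Combining the two conclusions gives $\eta\le 2s\le\gamma$. There is no real obstacle here. The only point needing care is that each theorem uses only its own half of the tail bound, and that the nonnegativity needed in Theorem~\ref{NONNETH} is supplied by the assumption $k_A\ge0$; Theorem~\ref{sxdfcghvbjknlmò,à.,lmknjbhvgcfdxszxfcgvbhn} does not use nonnegativity at all.
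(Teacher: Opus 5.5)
Your proof is correct and is the paper's own argument. The paper obtains this corollary by combining the theorem that gives $\eta\le 2s$ with Theorem~\ref{NONNETH}, which gives $\gamma\ge 2s$. You also correctly check that each half of the two-sided tail bound, together with $k_A\ge0$ for Theorem~\ref{NONNETH}, supplies the hypotheses of the corresponding theorem.
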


\begin{corollary}\label{cor2tails}
Let $s\in(0,1)$ and let $A:\R^N\to\R^{N\times N}$ be bounded and measurable and satisfy~\eqref{uniformlyellipticdefn}.
Let $k_A:\R^{2N}\setminus\Delta\to[0,+\infty)$ be measurable and satisfy~\eqref{OJS02irkt-3}, \eqref{mlnjkbhgvcfdxszexdcfgvhbjnkml,}
and
\[
\frac{c}{|x-y|^{N+\beta}}\le k_A(x,y)\le\frac{M}{|x-y|^{N+\beta}},\qquad |x-y|>1,
\] for some~$\beta>0$ and $0<c\le M<+\infty$.

Then, $\beta=2s$.
\end{corollary}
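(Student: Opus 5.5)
The statement is Corollary~\ref{cor2tails}. I would obtain it by applying Theorem~\ref{sxdfcghvbjknlmò,à.,lmknjbhvgcfdxszxfcgvbhn} and Theorem~\ref{NONNETH} to the same kernel, which is exactly Corollary~\ref{cor1tails} with $\gamma=\eta=\beta$.

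\emph{Step 1: the upper bound gives $\beta\le 2s$.} The kernel $k_A$ is measurable and nonnegative. It satisfies \eqref{OJS02irkt-3} and the short-range upper bound \eqref{mlnjkbhgvcfdxszexdcfgvhbjnkml,}. The right-hand inequality $k_A(x,y)\le M|x-y|^{-N-\beta}$ for $|x-y|>1$ is \eqref{kmljnbhgvcfdxcfvgbhBIS} with $\eta=\beta>0$. Together with the uniform ellipticity \eqref{uniformlyellipticdefn} of $A$, all hypotheses of Theorem~\ref{sxdfcghvbjknlmò,à.,lmknjbhvgcfdxszxfcgvbhn} hold, so $\beta\le2s$.

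\emph{Step 2: the lower bound gives $\beta\ge 2s$.} Since $k_A$ takes values in $[0,+\infty)$ and satisfies \eqref{OJS02irkt-3}, the left-hand inequality $k_A(x,y)\ge c|x-y|^{-N-\beta}$ for $|x-y|>1$, with $c>0$, is \eqref{DASOTTO} with $\gamma=\beta$. The matrix $A$ is bounded and measurable. Theorem~\ref{NONNETH} therefore applies and yields $\beta\ge2s$.

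Combining the two steps gives $\beta=2s$. The only point to check is that the nonnegativity of $k_A$, which Theorem~\ref{NONNETH} needs, is part of the hypotheses here; it is, since $k_A$ maps into $[0,+\infty)$. The assumption $c\le M$ only makes the two-sided bound consistent and is not used. There is no real obstacle: the argument is a direct combination of the two theorems.
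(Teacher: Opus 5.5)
Your proposal is correct and is exactly the paper's argument. The paper obtains Corollary~\ref{cor2tails} by combining the upper-tail theorem ($\eta\le 2s$) with Theorem~\ref{NONNETH} ($\gamma\ge 2s$), i.e., as the special case $\eta=\gamma=\beta$ of Corollary~\ref{cor1tails}, and your check of the hypotheses (uniform ellipticity and the short-range bound for the first theorem, nonnegativity for the second) is accurate.
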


\section{An explicit representation formula for the kernel}\label{kojhygftrdeswaq	QAWSEDRFTGYUHJ}
The aim of this section is to establish the kernel representation formula stated in Theorem~\ref{thmreg2}. To this end, we first present some preliminary results.

\subsection{An identification via bilinear forms}
To start with, we show that  whenever a symmetric kernel $k_A(x, y)$ represents the bilinear form associated with $A(x)$, the truncated kernels~$k_{A,\varepsilon}(x, y)$ introduced in \eqref{kerneltruncated} recover $k_A(x, y)$ in terms of the corresponding bilinear forms.
\begin{theorem}\label{thmA(x)identification}
Let $A:\R^N\to\R^{N\times N}$ be a bounded, measurable and symmetric matrix. 
Assume that there exists a measurable symmetric kernel $k_A:\R^{2N}\setminus\Delta\to\R$ such that \eqref{mnbhgvcfdxszxdfcgvhb} holds.

Then, for every $u,v\in C_c^\infty(\R^N)$,
\begin{eqnarray*}&&
\lim_{\varepsilon\searrow0}\iint_{\R^{2N}} (u(x)-u(y))(v(x)-v(y)) k_{A,\varepsilon}(x,y)\, dx\, dy\\&&\qquad
 = \iint_{\R^{2N}} (u(x)-u(y))(v(x)-v(y)) k_A(x,y) \,dx\, dy.
\end{eqnarray*}
\end{theorem}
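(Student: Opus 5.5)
The plan is to compute the left-hand side directly from the definition of $D^s$ and the truncated kernel, and to show that it equals the truncated energy $\int_{\R^N}A(z)D^s_\varepsilon u(z)\cdot D^s_\varepsilon v(z)\,dz$ up to errors vanishing as $\varepsilon\searrow0$. Here
\[
D^s_\varepsilon u(z):=d_{N,s}\int_{\R^N\setminus B_\varepsilon(z)}(u(z)-u(x))\frac{z-x}{|z-x|^{N+s+1}}\,dx .
\]
The conclusion then follows from \eqref{mnbhgvcfdxszxdfcgvhb}. The symmetry of $k_A$ only enters through \eqref{mnbhgvcfdxszxdfcgvhb} itself; by polarization it suffices to treat $u=v$, but I will work with the bilinear version directly.

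\emph{Step 1: $D^s_\varepsilon u\to D^s u$ in $L^2(\R^N)$.} Since $u\in C^\infty_c$, the cut-off integral over $B_\varepsilon(z)$ can be handled by symmetrization. By oddness of the kernel, the contribution of $B_\varepsilon(z)$ equals $\frac{d_{N,s}}{2}\int_{B_\varepsilon}(2u(z)-u(z+h)-u(z-h))\frac{-h}{|h|^{N+s+1}}\,dh$, which is $O(\varepsilon^{2-s})\|D^2u\|_\infty$ uniformly in $z$. Its support is contained in $\supp u+B_1$, so it vanishes in $L^2$. Together with $\|A\|_\infty<\infty$, this gives
\[
\int_{\R^N} A D^s_\varepsilon u\cdot D^s_\varepsilon v\to\mathcal B_A(u,v).
\]

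\emph{Step 2: algebraic rewriting of the truncated energy.} Writing out the product, we get
\[
\int_{\R^N} A D^s_\varepsilon u\cdot D^s_\varepsilon v=d_{N,s}^2\int_z\iint_{|x-z|,|y-z|>\varepsilon}(u(z)-u(x))(v(z)-v(y))K_z(x,y)\,dx\,dy\,dz,
\]
where $K_z(x,y):=\frac{(z-x)\cdot A(z)(z-y)}{|z-x|^{N+s+1}|z-y|^{N+s+1}}$. We then use the identity
\[
(u(z)-u(x))(v(z)-v(y))=\tfrac12\big[(u(z)-u(x))(v(z)-v(x))+(u(z)-u(y))(v(z)-v(y))-(u(x)-u(y))(v(x)-v(y))\big],
\]
which holds for $u=v$, and more generally for the symmetrized bilinear expression, since $K_z(x,y)=K_z(y,x)$ by the symmetry of $A$. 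The last term, after Fubini, produces exactly
\[
\iint(u(x)-u(y))(v(x)-v(y))k_{A,\varepsilon}(x,y)\,dx\,dy .
\]
Each of the first two terms factorizes in the free variable. For instance, the first is
\[
\int_z\int_x (u(z)-u(x))(v(z)-v(x))\frac{z-x}{|z-x|^{N+s+1}}\cdot A(z)\Big(\int_{|y-z|>\varepsilon}\frac{z-y}{|z-y|^{N+s+1}}\,dy\Big)\,dx\,dz .
\]
The inner $y$-integral is zero by oddness, understood as a limit over annuli $\varepsilon<|y-z|<R$ with $R\to\infty$.

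\emph{Step 3: justification (main obstacle).} The delicate point is that the triple integral is not absolutely convergent. The $y$-integral of $|z-y|^{-N-s}$ diverges at infinity, and the product $(u(z)-u(x))(v(z)-v(y))$ does not decay in $y$ when $z\in\supp u$. I would first additionally truncate at $|x-z|,|y-z|<R$. Then everything is absolutely integrable, Fubini applies, and the vanishing of the odd integrals over annuli is exact. Letting $R\to\infty$ then requires three checks.

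(a) The truncated $D^s_{\varepsilon,R}u\to D^s_\varepsilon u$ in $L^2$. This holds because the tail $\int_{|h|>R}u(z)\frac{h}{|h|^{N+s+1}}\,dh$ vanishes by oddness, and the remaining tail $\int_{|x-z|>R}u(x)\frac{z-x}{|z-x|^{N+s+1}}\,dx$ is bounded by $\|u\|_{L^1}\min\{R^{-N-s},|z|^{-N-s}\}$, which is small in $L^2$.

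(b) The kernels $k_{A,\varepsilon,R}\to k_{A,\varepsilon}$ in a sense good enough to pass to the limit against $(u(x)-u(y))(v(x)-v(y))$. I would use dominated convergence. One has $|k_{A,\varepsilon,R}(x,y)|\le C\int\frac{dz}{|z-x|^{N+s}|z-y|^{N+s}}$ on the complement of the balls, which is bounded by $C_\varepsilon\min\{1,|x-y|^{-N-2s}\}$ (using the Riesz composition $|\cdot|^{-N-s}*|\cdot|^{-N-s}\sim|x-y|^{-N-2s}$ away from the singularities). This is integrable against $(u(x)-u(y))(v(x)-v(y))$, since that factor is supported where $x$ or $y$ lies in a compact set.

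(c) The double-counted diagonal terms stay controlled while truncated in both variables. Here only the $y$-annulus integral needs to vanish, and it does so exactly for a symmetric truncation centered at $z$.

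With these checks the argument closes.
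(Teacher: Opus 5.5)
Your argument is correct in outline and has the same architecture as the paper's proof. It combines three ingredients: an exact identity at fixed $\varepsilon$ between $\int_{\R^N}A\,D^s_\varepsilon u\cdot D^s_\varepsilon v\,dz$ and the $k_{A,\varepsilon}$-form (Lemma~\ref{lemmaidtroncata}); the convergence $D^s_\varepsilon u\to D^s u$ in $L^2$ (Lemmata~\ref{lemmaDsL2} and~\ref{lemmaADepstozero}); and then \eqref{mnbhgvcfdxszxdfcgvhb}. The only real difference is how the fixed-$\varepsilon$ identity is organized. You keep the differences, use the three-point identity, and kill the two diagonal pieces through $\int_{\{|y-z|>\varepsilon\}}\frac{z-y}{|z-y|^{N+s+1}}\,dy=0$. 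The paper instead uses oddness at the outset to write $D^s_\varepsilon u(z)=-d_{N,s}\int_{\{|z-x|>\varepsilon\}}u(x)\frac{z-x}{|z-x|^{N+s+1}}\,dx$. The truncated energy then becomes $-2\iint u(x)v(y)k_{A,\varepsilon}(x,y)\,dx\,dy$, and the paper reaches the difference form via $\int_{\R^N}k_{A,\varepsilon}(x,y)\,dy=0$. It is the same cancellation organized differently, and neither version is appreciably shorter.

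Three of your auxiliary claims are false, although none of them breaks the proof.

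First, in Step~1, symmetrizing the contribution of $B_\varepsilon(z)$ gives $\frac{d_{N,s}}{2}\int_{B_\varepsilon}\big(u(z+h)-u(z-h)\big)\frac{h}{|h|^{N+s+1}}\,dh$. This is an odd first difference, not a second difference; your displayed integrand is odd in $h$ and would integrate to zero. The term is genuinely of order $\varepsilon^{1-s}$, with leading part a nonzero multiple of $\varepsilon^{1-s}\nabla u(z)$. That rate is still enough, exactly as in the paper.

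Second, the premise of Step~3 is wrong: $|z-y|^{-N-s}$ is integrable at infinity, with $\int_{\{|y-z|>\varepsilon\}}|z-y|^{-N-s}\,dy=\omega_{N-1}/(s\varepsilon^{s})$. So for fixed $\varepsilon>0$ the triple integral, and each of your three pieces, is absolutely convergent. This is what the paper verifies by showing $F_\varepsilon\in L^1(\R^{3N})$ and $\int_{\R^N}|k_{A,\varepsilon}(x,y)|\,dy<+\infty$. The additional truncation at scale $R$ can simply be dropped.

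Third, the majorant in~(b) is not $C_\varepsilon\min\{1,|x-y|^{-N-2s}\}$. There is no Riesz composition for the non-locally-integrable $|\cdot|^{-N-s}$. For large $|x-y|$, the region $B_{|x-y|/2}(x)\setminus B_\varepsilon(x)$ alone contributes a quantity comparable to $\varepsilon^{-s}|x-y|^{-N-s}$ to $\int|z-x|^{-N-s}|z-y|^{-N-s}\,dz$. The correct majorant $C_\varepsilon\min\{1,|x-y|^{-N-s}\}$ is still integrable against $|u(x)-u(y)|\,|v(x)-v(y)|$, so the dominated convergence you invoke goes through anyway.
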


To prove Theorem \ref{thmA(x)identification}, we need some intermediate steps.
For this, for every~$\varepsilon>0$, we define the ``truncated'' fractional gradient as
\begin{equation}\label{defnintegtruncation}
D_\varepsilon^s u(z):=d_{N,s}\int_{\{|z-x|>\varepsilon\}}(u(z)-u(x))\frac{z-x}{|z-x|^{N+s+1}}\, dx.
\end{equation}

\begin{lemma}\label{lemmaidtroncata}
Let $A:\R^N\to\R^{N\times N}$ be a bounded, measurable and symmetric matrix. For $\varepsilon>0$,  let $k_{A,\varepsilon}$ be defined as in \eqref{kerneltruncated}.

Then, for every $u,v\in C_c^\infty(\R^N)$,
\[
\int_{\R^N} A(z)D_\varepsilon^su(z)\cdot D_\varepsilon^sv(z)\, dz = \iint_{\R^{2N}} (u(x)-u(y))(v(x)-v(y))
k_{A,\varepsilon}(x,y) \,dx\, dy.
\]
\end{lemma}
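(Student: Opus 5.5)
Expand the left-hand side by inserting the definition of $D_\varepsilon^s$ and then apply Fubini. For fixed $z$ we have
\[
D^s_\varepsilon u(z)=d_{N,s}\int_{\{|z-x|>\varepsilon\}}(u(z)-u(x))\frac{z-x}{|z-x|^{N+s+1}}\,dx .
\]
The integrand is bounded by $2\|u\|_{L^\infty}|z-x|^{-N-s}$, so it is integrable away from the ball $B_\varepsilon(z)$. Therefore
\[
A(z)D^s_\varepsilon u(z)\cdot D^s_\varepsilon v(z)
= d_{N,s}^2\iint_{\{|z-x|>\varepsilon,\ |z-y|>\varepsilon\}}(u(z)-u(x))(v(z)-v(y))\,K(x,y,z)\,dx\,dy,
\]
where
\[
K(x,y,z):=\frac{(z-x)\cdot A(z)(z-y)}{|z-x|^{N+s+1}|z-y|^{N+s+1}} .
\]
The goal is to integrate in $z$, swap the order so that $z$ is innermost, and rewrite $(u(z)-u(x))(v(z)-v(y))$ in terms of pair differences in $(x,y)$ only.

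The key algebraic identity is
\[
(u(z)-u(x))(v(z)-v(y))=\tfrac12\Big[(u(z)-u(x))(v(z)-v(x))+(u(z)-u(y))(v(z)-v(y))-(u(x)-u(y))(v(x)-v(y))\Big]+\tfrac12\Big[(u(y)-u(x))(v(z)-v(y))\ldots\Big].
\]
Rather than attempt the exact polarization, I will use a cleaner route. Write $a=u(z)$, $b=u(x)$, $c=u(y)$ and similarly $a',b',c'$ for $v$. Then
\[
(a-b)(a'-c')=-\tfrac12(b-c)(b'-c')+\tfrac12\big[(a-b)(a'-b')+(a-c)(a'-c')\big]+\tfrac12\big[(a-b)(b'-c')-(b-c)(a'-c')\big].
\]
The last bracket is antisymmetric under the swap $x\leftrightarrow y$ combined with $u\leftrightarrow v$. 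Since the domain $\{|z-x|>\varepsilon,|z-y|>\varepsilon\}$ and $K$ are symmetric under $x\leftrightarrow y$ (using $A=A^T$), it is cleaner to symmetrize the whole left-hand side directly. The left-hand side is symmetric in $(u,v)$ because $A$ is symmetric, so I average the integrand with its image under $x\leftrightarrow y$ together with $u\leftrightarrow v$.

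The term $(a-b)(a'-b')$ depends only on $x$. Its contribution to the triple integral is
\[
\int_z (a-b)(a'-b')\frac{z-x}{|z-x|^{N+s+1}}\cdot A(z)\left(\int_{\{|z-y|>\varepsilon\}}\frac{z-y}{|z-y|^{N+s+1}}\,dy\right)dx,
\]
and the inner $y$-integral vanishes by oddness (the domain is the complement of a ball centred at $z$). This is conditionally, not absolutely, convergent, which is the main obstacle. I will handle it by cutting at $|z-y|<R$ and letting $R\to\infty$; the symmetric annulus gives exactly zero. To justify Fubini for the remaining pieces I also intend to truncate $\{|z-y|<R\}$ beforehand and use the compact support of $u,v$ for absolute integrability.

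After these cancellations, only $-\tfrac12 d_{N,s}^2\,(u(x)-u(y))(v(x)-v(y))\,K$ survives, together with the antisymmetric remainder. The remainder integrates to zero after symmetrization: swapping $x\leftrightarrow y$ in its second half maps it to minus the first half, using that $K(x,y,z)=K(y,x,z)$. Integrating the surviving term over $z\in\R^N\setminus(B_\varepsilon(x)\cup B_\varepsilon(y))$ gives precisely $k_{A,\varepsilon}(x,y)$. The last step is to justify exchanging the $z$-integral with the $(x,y)$-integral for that term. Here $|K|\le C|z-x|^{-N-s}|z-y|^{-N-s}$, and the factor $(u(x)-u(y))(v(x)-v(y))$ vanishes unless $x$ or $y$ lies in the support; this gives absolute integrability, up to the large-$|z|$ cutoff, which I remove by dominated convergence.
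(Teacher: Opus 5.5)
Your key polarization identity is false, and so is the claimed vanishing of its last bracket. The two mistakes cancel each other, which is why you reach the correct formula, but the argument as written does not prove it.

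Expanding, your right-hand side equals $(a-b)(a'-c')-\tfrac12(b-c)(b'-c')$. For instance, with $b=b'=1$ and $a=a'=c=c'=0$, the left side is $0$ while the right side is $-\tfrac12$.

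For the remainder, set $P:=(a-b)(b'-c')$ and $Q:=(b-c)(a'-c')$. The combined swap ($x\leftrightarrow y$ together with $u\leftrightarrow v$) sends $P\mapsto -Q$ and $Q\mapsto -P$. So $P-Q$ is invariant, not antisymmetric. The very relation you invoke gives $\iiint QK=-\iiint PK$, hence $\iiint (P-Q)K=2\iiint PK$, which is not zero. Expand $P$ into monomials. Every term other than $u(x)v(y)$ is independent of $x$ or of $y$, and $\int_{\{|z-y|>\varepsilon\}}\frac{z-y}{|z-y|^{N+s+1}}\,dy=0$, so those terms integrate to zero. Therefore $\tfrac12\iiint(P-Q)K=\iiint u(x)v(y)K$ over the truncated region. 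This is the entire left-hand side divided by $d_{N,s}^2$, so the bracket you discard carries everything.

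Incidentally, that $y$-integral is absolutely convergent, since $|h|^{-N-s}$ is integrable outside $B_\varepsilon$. The ``main obstacle'' you identify does not exist, and no $R$-cutoff is needed: all integrands involved are in $L^1(\R^{3N})$ thanks to the compact support of $u$ and $v$.

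The correct polarization is
\[
(a-b)(a'-c')=-\tfrac12(b-c)(b'-c')+\tfrac12\big[(a-b)(a'-b')+(a-c)(a'-c')\big]+\tfrac12\big[(a-b)(a'-c')-(a-c)(a'-b')\big].
\]
Its last bracket is antisymmetric under $x\leftrightarrow y$ alone, with no exchange of $u$ and $v$ needed. It therefore integrates to zero against the $x\leftrightarrow y$-symmetric kernel $K$ and domain. With this replacement your strategy goes through.

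The paper's proof is shorter and avoids polarization entirely. By oddness, $D^s_\varepsilon u(z)=-d_{N,s}\int_{\{|z-x|>\varepsilon\}}u(x)\frac{z-x}{|z-x|^{N+s+1}}\,dx$, so Fubini turns the left-hand side into $-2\iint u(x)v(y)k_{A,\varepsilon}(x,y)\,dx\,dy$. On the right-hand side, the diagonal terms $u(x)v(x)$ and $u(y)v(y)$ drop out because $\int k_{A,\varepsilon}(x,y)\,dy=0$. The two cross terms then coincide by the symmetry of $k_{A,\varepsilon}$.
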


\begin{proof}
For every given $z\in\R^N$, by odd symmetry of the integrand, we have that
\[
\int_{\{|z-x|>\varepsilon\}}\frac{z-x}{|z-x|^{N+s+1}} \,dx=0.
\]
Therefore, by \eqref{defnintegtruncation}, we can write
\begin{equation}\label{mjnkhbgvfcdxrcfvgbhn}
D_\varepsilon^s u(z) = -d_{N,s}\int_{\{|z-x|>\varepsilon\}} u(x)\frac{z-x}{|z-x|^{N+s+1}} \,dx.
\end{equation}
Hence, we have that
\[
A(z)D_\varepsilon^s u(z)\cdot D_\varepsilon^s v(z) = d_{N,s}^2\iint_{\substack{\{|z-x|>\varepsilon\}\\\{|z-y|>\varepsilon\}}} u(x)v(y)\frac{(z-x)\cdot A(z)(z-y)}{|z-x|^{N+s+1}|z-y|^{N+s+1}}\,dx\,dy.
\]

Now, we consider the function
\[
F_\varepsilon(z,x,y):=u(x)v(y)\chi_{\{|z-x|>\varepsilon\}}\chi_{\{|z-y|>\varepsilon\}}\frac{(z-x)\cdot A(z)(z-y)}{|z-x|^{N+s+1}|z-y|^{N+s+1}}
\] and we observe that
\begin{equation*}
A(z)D_\varepsilon^s u(z)\cdot D_\varepsilon^s v(z) = 
d_{N,s}^2\iint_{\R^{2N}} F_\varepsilon(z,x,y)\,dx\,dy
.\end{equation*}

We claim that
\begin{equation}\label{klmjhgftdrxsdcfvgbhn}
F_\varepsilon\in L^1(\R^{3N}).
\end{equation}
Indeed, since $A\in L^\infty(\R^N, \R^{N\times N})$, we infer that
\[
|F_\varepsilon(z,x,y)|\le\|A\|_{L^\infty(\R^N, \R^{N\times N})}|u(x)||v(y)| \chi_{\{|z-x|>\varepsilon\}} \chi_{\{|z-y|>\varepsilon\}}\frac{1}{|z-x|^{N+s}|z-y|^{N+s}}.
\]
We now choose $R>0$ such that $(\mbox{supp}(u)\cup \mbox{supp}(v))\subset B_R$. In the region $\{|z|\le 2R\}$, we have
\[
|F_\varepsilon(z,x,y)|\le\|A\|_{L^\infty(\R^N, \R^{N\times N})}\varepsilon^{-2N-2s}|u(x)||v(y)|.
\]
Hence, it follows that
\begin{equation}\label{456cjewkr7345iWERTYUIbdenwyjr7654}
\int_{B_{2R}}\int_{\R^N}\int_{\R^N} |F_\varepsilon(z,x,y)|\, dx\, dy\, dz\le\|A\|_{L^\infty(\R^N, \R^{N\times N})}\varepsilon^{-2N-2s}
|B_{2R}|\|u\|_{L^1(\R^N)}\|v\|_{L^1(\R^N)},
\end{equation} which is finite.

We now consider the region $\{|z|>2R\}$. If $x\in \mbox{supp}(u)\subset B_R$, then
\[
|z-x|\ge |z|-|x|\ge |z|-R >\frac{|z|}{2}.
\]
Similarly, for every $y\in\mbox{supp}(v)$, we have $|z-y|> |z|/2$. Accordingly, 
\[
|F_\varepsilon(z,x,y)|\le\|A\|_{L^\infty(\R^N, \R^{N\times N})}\frac{|u(x)||v(y)|}{|z-x|^{N+s}|z-y|^{N+s}}\le 2^{2N+2s} \|A\|_{L^\infty(\R^N)}|u(x)||v(y)||z|^{-2N-2s}.
\]
Therefore, we have
\begin{eqnarray*}&&
\int_{\R^N\setminus B_{2R}}\int_{\R^N}\int_{\R^N}|F_\varepsilon(z,x,y)|\, dx dy\, dz\\&&\qquad\le 2^{2N+2s}\|A\|_{L^\infty(\R^N, \R^{N\times N})}\|u\|_{L^1(\R^N)}\|v\|_{L^1(\R^N)}\int_{\{|z|>2R\}} |z|^{-2N-2s} \,dz,
\end{eqnarray*} which is finite.
This and~\eqref{456cjewkr7345iWERTYUIbdenwyjr7654}
entail~\eqref{klmjhgftdrxsdcfvgbhn}. 

In light of~\eqref{klmjhgftdrxsdcfvgbhn}, we can employ Fubini's theorem
and, recalling~\eqref{kerneltruncated}, we obtain that
\begin{equation}\label{hbgvfcdxsexrdctfvygbhjnkml,1}
\begin{split}
&\int_{\R^N} A(z)D_\varepsilon^s u(z)\cdot D_\varepsilon^s v(z) \,dz = d_{N,s}^2 \iiint_{\R^{3N}} F_\varepsilon(z,x,y) \,dx \,d, dz\\
&=d_{N,s}^2\iint_{\R^{2N}} u(x)v(y)\left(\;
\int_{\{\min\{|z-x|,|z-y|\}>\varepsilon\} }\frac{(z-x)\cdot A(z)(z-y)}{|z-x|^{N+s+1}|z-y|^{N+s+1}} dz \right) dx\,dy\\
&= -2\iint_{\R^{2N}} u(x)v(y)k_{A,\varepsilon}(x,y) \,d, dy.
\end{split}
\end{equation}

We observe that, since $A(z)$ is symmetric,
\[
(z-y)\cdot A(z)(z-x)=A(z)^T (z-y)\cdot (z-x)=A(z) (z-y)\cdot (z-x),
\]
therefore
\begin{equation}\label{jknbhgvcfdxszerdctfvgbhnj}
k_{A,\varepsilon}(x, y) = k_{A,\varepsilon}(y, x).
\end{equation}

Moreover, for every $x\in\R^N$, we have
\begin{eqnarray*}&&
\int_{\R^N} |k_{A,\varepsilon}(x,y)| \,dy \le \frac{d_{N,s}^2}{2}\|A\|_{L^\infty(\R^N, \R^{N\times N})}\int_{\R^N}\frac{\chi_{\{|z-x|>\varepsilon\}}}{|z-x|^{N+s}}\left(\;\int_{\R^N}\frac{\chi_{\{|z-y|>\varepsilon\}}}{|z-y|^{N+s}} \,dy\right)\,dz\\
&&\qquad=
\frac{d_{N,s}^2}{2}\|A\|_{L^\infty(\R^N, \R^{N\times N})}
\left( \; \int_{\{|h|>\varepsilon\}}\frac{dh}{|h|^{N+s}} \right)^2
=\frac{d_{N,s}^2}{2}\|A\|_{L^\infty(\R^N, \R^{N\times N})}
\left(\frac{\omega_{N-1}}{s\varepsilon^{s}}\right)^2
.\end{eqnarray*}
Accordingly, Fubini's theorem gives
\begin{equation}\label{l,òkmjnbhgvfytcdrsxeaw<<zesxrdtfgbhjnmk}
\int_{\R^N} k_{A,\varepsilon}(x,y)\,dy = -\frac{d_{N,s}^2}{2}\int_{\{|z-x|>\varepsilon\}}\frac{z-x}{|z-x|^{N+s+1}} A(z)\cdot\left(\;\int_{\{|z-y|>\varepsilon\}}\frac{z-y}{|z-y|^{N+s+1}} \,dy\right) \,dz =0.
\end{equation}

We now set
\[
I_\varepsilon:=\iint_{\R^{2N}}(u(x)-u(y))(v(x)-v(y)) k_{A,\varepsilon}(x,y)\, dx \,dy.
\]
For every $\varepsilon>0$, we have that $k_{A,\varepsilon}$ is integrable in each variable. Thus, employing also~\eqref{jknbhgvcfdxszerdctfvgbhnj},
we can write
\[
\begin{split}
I_\varepsilon &=\iint_{\R^{2N}} u(x)v(x)k_{A,\varepsilon}(x,y) \,dx\, dy + \iint_{\R^{2N}} u(y)v(y)k_{A,\varepsilon}(x,y)
\, dx\, dy\\
&\qquad-\iint_{\R^{2N}} u(x)v(y)k_{A,\varepsilon}(x,y) \,dx\, dy -\iint_{\R^{2N}} u(y)v(x)k_{A,\varepsilon}(x,y)\, dx \,dy\\
&=2\iint_{\R^{2N}} u(x)v(x)k_{A,\varepsilon}(x,y) \,dx\, dy-2\iint_{\R^{2N}} u(x)v(y)k_{A,\varepsilon}(x,y) \,dx\, dy
\end{split}
\]
and all the integrals are absolutely convergent. 

Now, by \eqref{l,òkmjnbhgvfytcdrsxeaw<<zesxrdtfgbhjnmk},
\[
\iint_{\R^{2N}} u(x)v(x)k_{A,\varepsilon}(x,y)\, d, dy =\int_{\R^N} u(x)v(x) \left(\;\int_{\R^N} k_{A,\varepsilon}(x,y)\, dy
\right) \,dx=0.
\]
Therefore, it follows that
\[
I_\varepsilon=-2\iint_{\R^{2N}}u(x)v(y)k_{A,\varepsilon}(x,y) \,dx\, dy.
\]
Recalling \eqref{hbgvfcdxsexrdctfvygbhjnkml,1}, we obtain the desired result.
\end{proof}

We now proceed by removing the truncation.

\begin{lemma}\label{lemmaDsL2}
Let $u\in C_c^\infty(\R^N)$. Then, 
\[
\lim_{\varepsilon\searrow 0}D^s_\varepsilon u = D^s u \quad\mbox{ in }L^2(\R^N, \R^N).
\]
\end{lemma}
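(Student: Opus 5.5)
We prove the $L^2$ convergence directly by estimating the difference $D^s u - D^s_\varepsilon u$ pointwise, with one bound near $\supp u$ and another far away. By the definitions,
\[
D^s u(z)-D^s_\varepsilon u(z)=d_{N,s}\int_{\{|z-x|\le\varepsilon\}}(u(z)-u(x))\frac{z-x}{|z-x|^{N+s+1}}\,dx ,
\]
where the integral is understood in the same (principal value) sense as in the definition of $D^s u$. Writing $x=z+h$ and using the odd symmetry of $h/|h|^{N+s+1}$ on $B_\varepsilon$, we replace $u(z)-u(z+h)$ by $u(z)-u(z+h)+\nabla u(z)\cdot h$. By Taylor's formula, this quantity is bounded in absolute value by $\tfrac12\|D^2u\|_{L^\infty(\R^N)}|h|^2$. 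Hence
\[
|D^s u(z)-D^s_\varepsilon u(z)|\le C\,\|D^2u\|_{L^\infty(\R^N)}\int_{B_\varepsilon}|h|^{1-N-s}\,dh=C'\varepsilon^{1-s},
\]
uniformly in $z$, and this integral converges since $s<1$.

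To pass from this uniform bound to an $L^2$ bound, we use support considerations. Take $R>0$ with $\supp u\subset B_R$ and restrict to $\varepsilon\in(0,1)$. If $|z|>R+1$, then $u(z)=0$ and $u(z+h)=0$ for all $|h|\le\varepsilon$. So the difference vanishes outside $B_{R+1}$. Therefore
\[
\|D^s u-D^s_\varepsilon u\|_{L^2(\R^N)}\le C'\varepsilon^{1-s}|B_{R+1}|^{1/2}\longrightarrow0 \quad\mbox{ as }\varepsilon\searrow0.
\]

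The only delicate point is justifying the symmetrization step at $h=0$. We would handle it by first computing on the annulus $\{\delta<|h|\le\varepsilon\}$, where the integrand is integrable and the odd term $\nabla u(z)\cdot h\,h/|h|^{N+s+1}$ integrates to zero exactly. We then let $\delta\searrow0$, which is legitimate because the modified integrand is dominated by $C|h|^{1-N-s}$. This same domination also shows that the principal value defining $D^s u(z)$ exists. Beyond this, no further obstacle is expected.
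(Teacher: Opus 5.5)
\textbf{The symmetrization step is wrong.} The kernel $h/|h|^{N+s+1}$ is odd, and so is the linear term $\nabla u(z)\cdot h$. Their product $(\nabla u(z)\cdot h)\,h\,|h|^{-N-s-1}$ is therefore \emph{even}, and it does not integrate to zero on the annulus. In polar coordinates,
\[
\int_{\{\delta<|h|\le\varepsilon\}}\frac{(\nabla u(z)\cdot h)\,h}{|h|^{N+s+1}}\,dh=\frac{\omega_{N-1}}{N(1-s)}\big(\varepsilon^{1-s}-\delta^{1-s}\big)\,\nabla u(z),
\]
which is nonzero whenever $\nabla u(z)\neq0$.

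Subtracting the gradient term is the trick for the fractional Laplacian, whose kernel is scalar and even. For the odd, vector-valued kernel of $D^s$, odd symmetry only lets you subtract \emph{constants}; this is how the paper drops $u(z)$ in the proof of Lemma~\ref{lemmaidtroncata}. The computation above actually shows that
\[
D^su(z)-D^s_\varepsilon u(z)=d_{N,s}\,\frac{\omega_{N-1}}{N(1-s)}\,\varepsilon^{1-s}\,\nabla u(z)+O\big(\|D^2u\|_{L^\infty(\R^N)}\,\varepsilon^{2-s}\big).
\]
So your displayed bound in terms of $\|D^2u\|_{L^\infty}$ alone fails. Testing it on $u(x)=\phi(x/R)$ with $R$ large, the left side is of order $\varepsilon^{1-s}/R$ while your right side is of order $\varepsilon^{1-s}/R^2$. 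Notice also that a genuine second-order remainder would have produced $|h|^{2-N-s}$, not the $|h|^{1-N-s}$ you wrote.

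\textbf{The repair.} The conclusion survives, and the fix makes the symmetrization unnecessary. Since $|u(z)-u(z+h)|\le\|\nabla u\|_{L^\infty(\R^N)}|h|$, the integrand is dominated by $\|\nabla u\|_{L^\infty(\R^N)}|h|^{1-N-s}$. This is integrable on $B_\varepsilon$ precisely because $s<1$, and it gives
\[
|D^su(z)-D^s_\varepsilon u(z)|\le\frac{d_{N,s}\,\omega_{N-1}}{1-s}\,\|\nabla u\|_{L^\infty(\R^N)}\,\varepsilon^{1-s}
\]
uniformly in $z$. The same bound shows that the integral defining $D^su$ is absolutely convergent, so no principal value is involved. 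Your localization is correct: the difference vanishes outside $B_{R+1}$ for $\varepsilon<1$. With the corrected pointwise bound, it finishes the proof.

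\textbf{Comparison with the paper.} The paper uses this same first-order estimate, via the fundamental theorem of calculus. Instead of localizing, it keeps $|\nabla u(z-th)|$ inside the integral and applies Minkowski's integral inequality. That yields $\|D^su-D^s_\varepsilon u\|_{L^2(\R^N)}\le C\varepsilon^{1-s}\|\nabla u\|_{L^2(\R^N)}$, with no dependence on the support. Your route gives a constant involving $\|\nabla u\|_{L^\infty(\R^N)}|B_{R+1}|^{1/2}$, which is equally sufficient for $u\in C^\infty_c(\R^N)$.
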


\begin{proof}
By the fundamental theorem of calculus, we have that, for~$|h|\le\varepsilon$,
\[
u(z)-u(z-h)=\int_0^1 h\cdot\nabla u(z-th) \,dt.
\]
Therefore
\begin{eqnarray*}
|D^s u(z)-D^s_\varepsilon u(z)|& \le& 
d_{N,s}\int_{\{|h|\le\varepsilon\}}\frac{|u(z)-u(z-h)|}{|h|^{N+s}} \,dh\\
&\le&
d_{N,s}\int_{\{|h|\le\varepsilon\}}\left(\;
\int_0^1 |\nabla u(z-th) |\,dt\right)\frac{dh}{|h|^{N+s-1}}.
\end{eqnarray*}
Thus, by Minkowski's inequality, we infer that
\begin{eqnarray*}
\|D^s u-D^s_\varepsilon u\|_{L^2(\R^N)}&\le& d_{N,s}
\left(\;\int_{\R^N}
\left(\;\int_{\{|h|\le\varepsilon\}}\left(\;
\int_0^1 |\nabla u(z-th) |\,dt\right)\frac{dh}{|h|^{N+s-1}}\right)^2
\,dz\right)^{\frac12}
\\&\le&
d_{N,s}\int_{\{|h|\le\varepsilon\}} \int_0^1
\left(\;\int_{\R^N} |\nabla u(z-th) |^2\,dz\right)^{\frac12}
\frac{dh}{|h|^{N+s-1}}\,dt
\\&\le&
d_{N,s}\|\nabla u\|_{L^2(\R^N)}\int_{\{|h|\le\varepsilon\}}
\frac{dh}{|h|^{N+s-1}}\\
&=& \frac{d_{N,s}\,\omega_{N-1}\,\varepsilon^{1-s}}{(1-s)}
\|\nabla u\|_{L^2(\R^N)}
.
\end{eqnarray*}
Letting $\varepsilon\searrow 0$, we obtain the desired result.
\end{proof}

\begin{lemma}\label{lemmaADepstozero}
Let $A:\R^N\to\R^{N\times N}$ be a bounded, measurable and symmetric matrix. 

Then, for every $u,v\in C_c^\infty(\R^N)$, 
\[
\lim_{\varepsilon\searrow 0}\int_{\R^N} A(z)D_\varepsilon^s u(z) \cdot D_\varepsilon^s v(z) dz = \int_{\R^N} A(z)D^s u(z)\cdot D^s v(z) dz.
\]
\end{lemma}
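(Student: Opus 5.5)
The plan is to reduce the statement to the $L^2$ convergence of Lemma~\ref{lemmaDsL2}, using the boundedness of $A$. Set $M:=\|A\|_{L^\infty(\R^N,\R^{N\times N})}$. First note that $D^s u$ and $D^s v$ belong to $L^2(\R^N,\R^N)$ for $u,v\in C^\infty_c(\R^N)$. This follows either from Plancherel's identity, since $|\widehat{D^s u}(\xi)|=|\xi|^s|\widehat u(\xi)|$ is square integrable, or directly from Lemma~\ref{lemmaDsL2}, which shows that $D^s u$ is an $L^2$ limit. The same lemma gives $D^s_\varepsilon u\in L^2(\R^N,\R^N)$ for every small $\varepsilon$, because $D^s_\varepsilon u-D^s u$ is in $L^2$. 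Hence both integrals in the statement are absolutely convergent, as $|A(z)\xi\cdot\zeta|\le M|\xi||\zeta|$.

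Next I would write the difference with the usual add-and-subtract decomposition:
\[
A D^s_\varepsilon u\cdot D^s_\varepsilon v - A D^s u\cdot D^s v
= A(D^s_\varepsilon u-D^s u)\cdot D^s_\varepsilon v + A D^s u\cdot (D^s_\varepsilon v - D^s v).
\]
Integrating over $\R^N$ and applying the Cauchy--Schwarz inequality pointwise and then in $L^2$ gives
\[
\left|\int_{\R^N} A D^s_\varepsilon u\cdot D^s_\varepsilon v\,dz-\int_{\R^N} A D^s u\cdot D^s v\,dz\right|
\le M\Big(\|D^s_\varepsilon u-D^s u\|_{L^2}\|D^s_\varepsilon v\|_{L^2}+\|D^s u\|_{L^2}\|D^s_\varepsilon v-D^s v\|_{L^2}\Big).
\]

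Finally, Lemma~\ref{lemmaDsL2} gives $\|D^s_\varepsilon u-D^s u\|_{L^2}\to0$ and $\|D^s_\varepsilon v-D^s v\|_{L^2}\to0$. It also keeps $\|D^s_\varepsilon v\|_{L^2}\le \|D^s v\|_{L^2}+\|D^s_\varepsilon v-D^s v\|_{L^2}$ bounded as $\varepsilon\searrow0$. Therefore the right-hand side tends to zero, which proves the claim.

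I expect no real obstacle. The only point needing care is the finiteness of $\|D^s u\|_{L^2}$ and $\|D^s_\varepsilon u\|_{L^2}$, and this is settled as described above. Symmetry of $A$ is not needed for this step.
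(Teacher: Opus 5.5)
Your proposal is correct and follows essentially the same route as the paper: the same add-and-subtract decomposition, Cauchy--Schwarz with $\|A\|_{L^\infty}$, and Lemma~\ref{lemmaDsL2} to send both difference terms to zero. Your extra care about the finiteness of $\|D^s u\|_{L^2}$ and $\|D^s_\varepsilon v\|_{L^2}$ is settled by the Plancherel argument. The alternative justification ``directly from Lemma~\ref{lemmaDsL2}'' is somewhat circular, since that lemma only bounds the difference $D^s u-D^s_\varepsilon u$.
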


\begin{proof}
For every~$z\in\R^N$, we write
\[
\begin{split}
&A(z) D_\varepsilon^s u(z)\cdot D_\varepsilon^s v(z) - A(z) D^s u(z)\cdot D^s v(z)\\
&\quad= A(z)(D_\varepsilon^s u(z)-D^s u(z))\cdot D_\varepsilon^s v(z) + A D^s u(z)\cdot(D_\varepsilon^s v(z)-D^s v(z)).
\end{split}
\]
Hence, we get
\[
\begin{split}
&\left|\,\int_{\R^N}A(z) D_\varepsilon^s u(z)\cdot D_\varepsilon^s v(z) - \int_{\R^N} A(z) D^s u(z)\cdot D^s v(z)\right|\\
&\quad\le\|A\|_{L^\infty(\R^N, \R^{N\times N})}\|D_\varepsilon^s u-D^s u\|_{L^2(\R^N)} \|D_\varepsilon^s v\|_{L^2(\R^N)}\\
&\qquad\quad+\|A\|_{L^\infty(\R^N, \R^{N\times N})}\|D^s u\|_{L^2(\R^N)}\|D_\varepsilon^s v-D^s v\|_{L^2(\R^N)}.
\end{split}
\]
This and Lemma \ref{lemmaDsL2} give the desired convergence.
\end{proof}

\begin{proof}[Proof of Theorem \ref{thmA(x)identification}]
The desired result plainly follows combining Lemmata~\ref{lemmaidtroncata} and~\ref{lemmaADepstozero} with the identity~\eqref{mnbhgvcfdxszxdfcgvhb}.
\end{proof}

\subsection{Proof of Theorem \ref{thmreg2}}
We stress that Theorem \ref{thmA(x)identification} identifies the kernel only at the level of the associated bilinear forms through a limit process in the principal value sense. To obtain a pointwise representation outside the diagonal, we need additional regularity on the matrix~$A(x)$.
This allows us to control the singular remainders near $z=x$ and $z=y$, to prove the existence of the pointwise limit of $k_{A,\varepsilon}(x,y)$, and then to identify it with $k_A(x, y)$ outside the diagonal.

\begin{lemma}\label{lemmareg1}
Let $s\in(0,1)$ and let $A\in C(\R^N, \R^{N\times N})$ be a bounded and symmetric matrix satisfying~\eqref{mkojihguyftdreswxrdcftvgybhjn}
for every compact set~$K\subset\R^N$.

Then, for every $x\ne y$, the limit in \eqref{INT2PVdefn} exists and is finite.

In addition, let
\begin{equation}\label{bxszasexrdtfyguhijokplèòl}
\widetilde k_A(x,y):= -\frac{d_{N,s}^2}{2}I_A(x,y).
\end{equation}
Then, for every compact set $E\subset \R^{2N}\setminus\Delta$,
\begin{equation}\label{desirtwqdvba68754olimit}
\lim_{\varepsilon\searrow0}\sup_{(x,y)\in E}\left|k_{A,\varepsilon}(x,y) - \widetilde k_A(x,y)\right|=0.
\end{equation}
\end{lemma}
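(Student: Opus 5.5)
The plan is to fix $(x,y)$ with $x\ne y$, set $\delta:=|x-y|$, and split the integration domain into three pieces: a ball $B_\rho(x)$, a ball $B_\rho(y)$ with $\rho\le\delta/4$, and the complement $\R^N\setminus(B_\rho(x)\cup B_\rho(y))$. On the complement the integrand is bounded by $\|A\|_{L^\infty}|z-x|^{-N-s}|z-y|^{-N-s}$, which is integrable, since away from both points it decays like $|z|^{-2N-2s}$ at infinity and has no singularity. So everything reduces to the annuli $\varepsilon<|z-x|<\rho$ and $\varepsilon<|z-y|<\rho$, which are handled symmetrically. Note that for $\varepsilon<\rho$ the two $\varepsilon$-balls are disjoint and far apart.

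Near $x$ I freeze the matrix at $x$ and write
\[
\frac{(z-x)\cdot A(z)(z-y)}{|z-x|^{N+s+1}|z-y|^{N+s+1}}=\frac{(z-x)\cdot (A(z)-A(x))(z-y)}{|z-x|^{N+s+1}|z-y|^{N+s+1}}+\frac{(z-x)\cdot A(x)(z-y)}{|z-x|^{N+s+1}|z-y|^{N+s+1}}.
\]
For the first term, take $K$ a compact neighbourhood containing $\overline{B_\rho(x)}$. Since $|z-y|\ge 3\delta/4$, it is bounded by $C\delta^{-N-s}\,\omega_{A,K}(|z-x|)\,|z-x|^{-N-s}$. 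Integrating in polar coordinates gives $C\int_0^\rho\omega_{A,K}(r)r^{-1-s}\,dr<\infty$ by \eqref{mkojihguyftdreswxrdcftvgybhjn}. This term is therefore absolutely integrable on $B_\rho(x)$, and the limit as $\varepsilon\searrow0$ exists by dominated convergence.

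For the second term, with the frozen constant matrix, I set $g(z):=A(x)(z-y)|z-y|^{-N-s-1}$, which is smooth on $B_\rho(x)$. I write $g(z)=g(x)+(g(z)-g(x))$. The $g(x)$ part integrates to zero on every annulus $\varepsilon<|z-x|<\rho$ by the odd symmetry of $(z-x)|z-x|^{-N-s-1}$. The remainder is bounded by $\|\nabla g\|_{L^\infty(B_\rho(x))}|z-x|^{1-N-s}$, which is integrable because $s<1$. The same argument works near $y$ with the roles swapped. Hence the limit $I_A(x,y)$ exists and is finite.

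For the uniform convergence \eqref{desirtwqdvba68754olimit}, I would make every constant above uniform over a compact $E\subset\R^{2N}\setminus\Delta$. Choose $\delta_0:=\min_E|x-y|>0$, take $\rho:=\delta_0/4$, and let $K$ be the closed $\rho$-neighbourhood of the projections of $E$. Then $\|\nabla g\|_{L^\infty}\le C(\delta_0,\|A\|_\infty)$ uniformly on $E$, and the Dini modulus is uniform as well. The difference $k_{A,\varepsilon}-\widetilde k_A$ equals $\frac{d_{N,s}^2}{2}$ times the integrals over $\{|z-x|<\varepsilon\}$ and $\{|z-y|<\varepsilon\}$ of the nonsymmetric parts; the symmetric parts vanish on balls centred at $x$. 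These are bounded by
\[
C\int_0^\varepsilon\frac{\omega_{A,K}(r)}{r^{1+s}}\,dr+C\varepsilon^{1-s}
\]
uniformly on $E$, and this bound tends to $0$. The main, though mild, obstacle is exactly this uniformity bookkeeping. In particular one must make sure that $\omega_{A,K}$ is taken on a single compact set $K$ containing all the relevant balls, and that the cancellation step is performed on balls centred exactly at $x$ and at $y$.
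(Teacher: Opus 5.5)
Your proposal is correct and follows essentially the same route as the paper. You isolate $B_\rho(x)$ and $B_\rho(y)$ with $\rho\le|x-y|/4$ and subtract the frozen value at the centre; your two-step splitting into $A(z)-A(x)$ and $g(z)-g(x)$ is exactly the paper's bound by $\omega_{A,K}(|z-x|)$ plus a Lipschitz term. You then cancel the constant part by odd symmetry on annuli centred at $x$ and $y$, and make all constants uniform on a compact $E$ to obtain the bound $C_E\bigl(\int_0^\varepsilon \omega_{A,K}(r)r^{-1-s}\,dr+\varepsilon^{1-s}\bigr)$.
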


\begin{proof}
For every~$x\in\R^N$, we use the notation
\[
H_x(z):=\frac{z-x}{|z-x|^{N+s+1}}.
\]
For~$x\neq y$, we take
\begin{equation}\label{5678vneti43u9tWERTYUr6873tgfbka987609876}
\rho:=\rho_{x,y}\in\left(0,\min\left\{1,\frac{|x-y|}{4}\right\}\right).
\end{equation}
With this choice, the balls $B_\rho(x)$ and $B_\rho(y)$ are disjoint. 

We claim that, for every~$x\neq y$, the function
\begin{equation}\label{bhgfdxszsxdcfvgb}
z\in B_\rho(x)\mapsto H_x(z)\cdot\Big(A(z)H_y(z)-A(x)H_y(x)\Big) \mbox{ is absolutely integrable in $B_\rho(x)$}.
\end{equation}
Indeed, thanks to~\eqref{5678vneti43u9tWERTYUr6873tgfbka987609876},
if~$z\in B_\rho(x)$,
\[
|z-y|\ge |x-y|-|z-x|\ge\frac34|x-y|\ge 3\rho.
\]
Therefore $H_y$ is smooth in a neighborhood of $\overline{B_\rho(x)}$. In particular, there exists $C_{x,y, \rho}>0$ such that, for every $z\in B_\rho(x)$,
\[
|H_y(z)|\le C_{x,y}\quad\mbox{ and }\quad |H_y(z)-H_y(x)| \le C_{x,y,\rho}|z-x|.
\]
Accordingly, recalling the notation in~\eqref{omegaAkdefn},
\[
\begin{split}
|A(z)H_y(z)-A(x)H_y(x)|&\le |A(z)-A(x)|\,|H_y(z)| + |A(x)|\,|H_y(z)-H_y(x)|\\
&\le C_{x,y, \rho}\,\omega_{A, K}(|z-x|) + C_{x,y, \rho} \|A\|_{L^\infty(\R^N, \R^{N\times N})}|z-x|,
\end{split}
\] for every compact set~$K\subset\R^N$ containing~$x$ and~$z$.

As a result,
\begin{equation}\label{9876yhgjkdfrgjhyuiWAEZSXRDCFTYGuhd3478y}
\Big|H_x(z)\cdot\left(A(z)H_y(z)-A(x)H_y(x)\right)\Big|\le C_{x,y, \rho}\frac{\omega_{A, K}(|z-x|)}{|z-x|^{N+s}} + C_{x,y, \rho} \frac{\|A\|_{L^\infty(\R^N, \R^{N\times N})}}{|z-x|^{N+s-1}}.
\end{equation}
Now, by using polar coordinates centered at $x$, we obtain that, up to renaming constants,
\begin{eqnarray*}&&
\int_{B_\rho(x)}\Big|H_x(z)\cdot\left(A(z)H_y(z)-A(x)H_y(x)\right)\Big| \,dz\\
&&\qquad\le C_{x,y, \rho} \int_0^\rho \frac{\omega_{A, K}(r)}{r^{1+s}}\,dr + C_{x,y, \rho} \|A\|_{L^\infty(\R^N, \R^{N\times N})} \int_0^\rho\frac{dr}{ r^{s}}.
\end{eqnarray*}
Since the first integral is finite thanks to \eqref{mkojihguyftdreswxrdcftvgybhjn}, from this we deduce the desired claim
in~\eqref{bhgfdxszsxdcfvgb}.

Moreover, we observe that~$H_x(z)$ is odd with respect to $x$. Therefore, for every $\varepsilon\in(0,\rho)$,
\[
\int_{B_\rho(x)\setminus B_\varepsilon(x)} H_x(z)\cdot A(x)H_y(x) \,dz =0
,\]
hence \[
\lim_{\varepsilon\searrow0}\int_{B_\rho(x)\setminus B_\varepsilon(x)} H_x(z)\cdot A(z)H_y(z) dz = \int_{B_\rho(x)}
H_x(z)\cdot\left( A(z)H_y(z)-A(x)H_y(x)\right) dz,
\]
and the integral at the right hand side is absolutely convergent thanks to \eqref{bhgfdxszsxdcfvgb}. This proves the existence of the limit near the singularity $z=x$.

We now address the singularity at $z=y$. Since $A(z)$ is symmetric, we have that
\[
H_x(z)\cdot A(z)H_y(z) = H_y(z)\cdot A(z)H_x(z).
\]
Therefore, by interchanging $x$ and $y$ and using the previous argument,  we infer that the limit
\[
\lim_{\varepsilon\searrow0}\int_{B_\rho(y)\setminus B_\varepsilon(y)} H_x(z)\cdot A(z)H_y(z) \,dz
\]
exists and is finite. 

We now consider the region $\R^N\setminus\left(B_\rho(x)\cup B_\rho(y)\right)$. Let~$R:=R_{x,y}>0$ be such that~$B_\rho(x)\cup B_\rho(y)\subset B_R$. Then, if~$z\in\R^N\setminus B_{2R}$,
$$ |z-x|\ge |z|-|x|\ge\frac{|z|}2 $$
and similarly~$|z-y|\ge |z|/2$.

Therefore
\[
\begin{split}&
\int_{\R^N\setminus\left(B_\rho(x)\cup B_\rho(y)\right)}\left|H_x(z)\cdot A(z)H_y(z)\right| \,dz \\&\qquad
\le \|A\|_{L^\infty(\R^N, \R^{N\times N})}\int_{\R^N\setminus\left(B_\rho(x)\cup B_\rho(y)\right)}\frac{dz}{|z-x|^{N+s}|z-y|^{N+s}}\\
&\qquad
\le C\|A\|_{L^\infty(\R^N, \R^{N\times N})}\left(\;\int_{B_{2R}\setminus\left(B_\rho(x)\cup B_\rho(y)\right)}\frac{dz}{|z-x|^{N+s}|z-y|^{N+s}}+
\int_{\R^N\setminus B_{2R}}\frac{dz}{|z|^{2N+2s} }\right)
\\
&\qquad <+\infty.
\end{split}
\]
Gathering these pieces of information,
we conclude that the limit in \eqref{INT2PVdefn} exists and is finite for every~$x\ne y$.

Let now~$E\subset\R^{2N}\setminus\Delta$ be compact. Thus, there exists~$d>0$ such that~$|x-y|\ge d$ for every~$(x,y)\in E$.
We take
\[
\rho\in\left(0,\min\left\{1,\frac d4\right\}\right).
\]
Since $E$ is compact, if~$(x,y)\in E$ then both~$x$ and~$y$ belong to a compact set of~$\R^N$. Hence, there exists a compact set $K\subset\R^N$ containing all the balls $B_\rho(x)$ and $B_\rho(y)$ for $(x,y)\in E$. 

Moreover, since $|x-y|\ge d$, all the estimates above can be chosen
uniformly for $(x,y)\in E$. In particular, 
we infer from~\eqref{9876yhgjkdfrgjhyuiWAEZSXRDCFTYGuhd3478y} that
there exists a constant~$C_E>0$ such that, for every~$(x,y)\in E$ and every~$z\in B_\rho(x)$,
\[
\Big|H_x(z)\cdot\left(A(z)H_y(z)-A(x)H_y(x) \right)\Big|\le C_E\frac{\omega_{A,K}(|z-x|)}{|z-x|^{N+s}} +\frac{C_E}{|z-x|^{N+s-1}}
.\]
We notice that the analogous estimate holds near $y$.

For $\varepsilon\in(0,\rho)$,  we set
\begin{equation}\label{kmjhbgvfcdxrcfvgb}
I_{A,\varepsilon}(x,y):= \int_{\R^N\setminus\left(B_\varepsilon(x)\cup B_\varepsilon(y)\right)}\frac{(z-x)\cdot A(z)(z-y)}{|z-x|^{N+s+1}|z-y|^{N+s+1}} \,dz.
\end{equation}
Since the constant terms cancel by odd symmetry, we obtain that
\[
\begin{split}
|I_A(x,y)-I_{A,\varepsilon}(x,y)|&
=\left| \;\int_{B_\varepsilon(x)\cup B_\varepsilon(y)} H_x(z) \cdot A(x)H_y(z)\,dz
\right|\\&\le 2
\int_{B_\varepsilon(x)} \Big|H_x(z)\cdot\left(A(z)H_y(z)-A(x)H_y(x) \right)\Big|\,dz\\&
\le C_E\left(\;\int_{B_\varepsilon(x)}\frac{\omega_{A,K}(|z-x|)}{|z-x|^{N+s}}\, dz +\int_{B_\varepsilon(x)} |z-x|^{1-N-s} \,dz\right)\\
&=C_{E,N}\left(\;\int_0^\varepsilon \frac{\omega_{A,K}(r)}{r^{1+s}} \,dr
+\int_0^\varepsilon r^{-s} \,dr \right)\\
&=C_{E,N,s}\left(\;\int_0^\varepsilon \frac{\omega_{A,K}(r)}{r^{1+s}} \,dr
+\varepsilon^{1-s}\right)
.
\end{split}
\]
Hence,
\[
\sup_{(x,y)\in E}|I_A(x,y)-I_{A,\varepsilon}(x,y)|\le C_{E,N,s}\left(\;\int_0^\varepsilon \frac{\omega_{A,K}(r)}{r^{1+s}} \,dr
+\varepsilon^{1-s}\right).
\]
By \eqref{mkojihguyftdreswxrdcftvgybhjn}, we thus find that
\[
\lim_{\varepsilon\searrow0}\sup_{(x,y)\in E}|I_A(x,y)-I_{A,\varepsilon}(x,y)|=0.
\]
Recalling \eqref{kerneltruncated}, \eqref{bxszasexrdtfyguhijokplèòl},
and~\eqref{kmjhbgvfcdxrcfvgb}, we obtain the desired limit in~\eqref{desirtwqdvba68754olimit}.
\end{proof}

We are now in the position to prove Theorem \ref{thmreg2}:

\begin{proof}[Proof of Theorem \ref{thmreg2}]
By Lemma \ref{lemmareg1}, we know that, for every compact set~$E\subset\R^{2N}\setminus\Delta$,
\begin{equation}\label{nbhvgcfdxsdrcftvgbhjnkm}
\lim_{\varepsilon\searrow0}\sup_{(x,y)\in E}\left|k_{A,\varepsilon}(x,y)-\widetilde k_A(x,y)\right|=0.
\end{equation}

Let now $u,v\in C_c^\infty(\R^N)$ with disjoint supports. Thus, $E:=\mbox{supp}(u)\times\mbox{supp}(v)$ is a compact subset of $\R^{2N}\setminus\Delta$.
Moreover,
\begin{eqnarray*}&&
\left| \iint_{\R^{2N}} u(x)v(y) \left( k_{A,\varepsilon}(x,y)-\widetilde k_A(x,y) \right)\, dx\, dy \right|\\&&\qquad\le \|u\|_{L^1(\R^N)} \|v\|_{L^1(\R^N)} \sup_{(x,y)\in E} \left| k_{A,\varepsilon}(x,y)-\widetilde k_A(x,y) \right|.
\end{eqnarray*}
Therefore, by \eqref{nbhvgcfdxsdrcftvgbhjnkm},
\begin{equation}\label{nbhgvfctdxrsrdctfvghbjn}
\lim_{\varepsilon\searrow0} \iint_{\R^{2N}} u(x)v(y)k_{A,\varepsilon}(x,y) \,dx\,dy = \iint_{\R^{2N}} u(x)v(y)\widetilde k_A(x,y) \,dx\, dy. 
\end{equation}

Now, since $u$ and $v$ have disjoint supports,  we have
\[
(u(x)-u(y))(v(x)-v(y)) = -u(x)v(y)-u(y)v(x).
\]
Therefore, using the symmetry of the kernel $k_{A,\varepsilon}$,
\[
\begin{split}
&\iint_{\R^{2N}}(u(x)-u(y))(v(x)-v(y)) k_{A,\varepsilon}(x,y) \,dx\, dy\\
&\quad= -\iint_{\R^{2N}}u(x)v(y)k_{A,\varepsilon}(x,y) \,dx\, dy -\iint_{\R^{2N}} u(y)v(x)k_{A,\varepsilon}(x,y)\, dx\, dy\\
&\quad= -2\iint_{\R^{2N}} u(x)v(y)k_{A,\varepsilon}(x,y) dx dy.
\end{split}
\]

The same argument, using the symmetry of the kernel $k_A$, yields
\[
\iint_{\R^{2N}} (u(x)-u(y))(v(x)-v(y)) k_A(x,y)\, dx\, dy = -2\iint_{\R^{2N}} u(x)v(y)k_A(x,y)\, dx\, dy.
\]
Hence, combining these identities with Theorem \ref{thmA(x)identification}, we get
\[
\lim_{\varepsilon\searrow0}\iint_{\R^{2N}} u(x)v(y)k_{A,\varepsilon}(x,y) \,dx \,dy = \iint_{\R^{2N}} u(x)v(y)k_A(x,y)\, dx\, dy.
\]
Comparing this expression with \eqref{nbhgvfctdxrsrdctfvghbjn}, we infer that
\[
\iint_{\R^{2N}} u(x)v(y)\widetilde k_A(x,y) \,dx\, dy = \iint_{\R^{2N}} u(x)v(y)k_A(x,y) \,dx\, dy.
\]

By \cite[Lemma 2.5]{DSVPART1}, we thus\footnote{Notice indeed that~\cite[Lemma 2.5]{DSVPART1} would be true even taking~$u,v\in C^\infty_c(\R^N)$ with disjoint supports in the assumptions.} have
\[
k_A(x,y)=\widetilde k_A(x,y) \quad\mbox{ a.e. in } \R^{2N}\setminus\Delta.
\]
Hence, the desired result follows by \eqref{bxszasexrdtfyguhijokplèòl}.
\end{proof}

We notice that a simpler regularity assumption on the matrix~$A$ implying \eqref{mkojihguyftdreswxrdcftvgybhjn} for every compact set $K\subset\R^N$ is
\[
A\in C_{\rm loc}^{0,\beta}(\R^N)\quad\mbox{ for some }\beta>s.
\]
Indeed, $\omega_{A,K}(r)\le C_Kr^\beta$,  and therefore
\[
\int_0^1\frac{\omega_{A,K}(r)}{r^{1+s}} dr\le C_K\int_0^1 r^{\beta-s-1} dr<+\infty.
\]
We stress that the threshold $\beta>s$ is sharp in the Hölder scale. Indeed, in general, $C^{0,s}$ regularity is not sufficient for the existence of the limit. We stress that the integrability assumption on the modulus of continuity is essentially sharp among conditions depending only on $\omega_{A,K}$, although additional cancellations may yield convergence for some particular matrices.

\subsection{A class of admissible matrices}
We now consider a class of nonconstant matrices for which \eqref{CHARKE} admits a more explicit formulation.

Let $M\ge 1$. For $j\in\{1, \dots, M\}$,  assume that
\begin{equation}\label{matrconstxi}
\begin{split}
&A_0,B_j,C_j\in\R^{N\times N} \mbox{ are constant and symmetric matrices, and } \xi_j\in\R^N\setminus\{0\}.
\end{split}
\end{equation}
We also suppose that~$A_0$ is positive definite and that
\begin{equation}\label{jihugyfctdxrsdctfvgbhn}
\sum_{j=1}^M\left(\|B_j\| + \|C_j\|\right) < \lambda_{\min}(A_0).
\end{equation}
We define 
\begin{equation}\label{Asincosdefn}
A(z):=A_0 + \sum_{j=1}^M\Big(B_j\cos(\xi_j\cdot z) + C_j\sin(\xi_j\cdot z)\Big).
\end{equation}

In this setting, we now prove that the matrix $A(z)$
satisfies all the assumptions required by Theorem \ref{thmreg2} for the pointwise identification of the kernel $k_A(x, y)$.

\begin{proposition}\label{jinhbugvyfctdxrsezaxrdctfvybhuijnkm}
The matrix $A(z)$ is bounded, symmetric, uniformly elliptic,
and belongs to~$C^{0,1}(\R^N,\R^{N\times N})$.
\end{proposition}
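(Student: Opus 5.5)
The plan is to check the four properties one at a time, directly from the explicit formula \eqref{Asincosdefn}, using only \eqref{matrconstxi} and \eqref{jihugyfctdxrsdctfvgbhn}. Throughout, $\|\cdot\|$ denotes the operator norm.

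Symmetry and boundedness come first and are immediate. For each $z$, the matrix $A(z)$ is a real linear combination of the symmetric matrices $A_0$, $B_j$ and $C_j$, so it is symmetric. Since $|\cos|,|\sin|\le1$, the triangle inequality gives
\[
\|A(z)\|\le \|A_0\|+\sum_{j=1}^M\left(\|B_j\|+\|C_j\|\right)\qquad\mbox{ for every }z\in\R^N,
\]
hence $A\in L^\infty(\R^N,\R^{N\times N})$.

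Uniform ellipticity, in the sense of \eqref{uniformlyellipticdefn}, is the only step that uses the smallness assumption. For $\xi\in\R^N$, I would use $A_0\xi\cdot\xi\ge\lambda_{\min}(A_0)|\xi|^2$ together with $|B_j\xi\cdot\xi|\le\|B_j\||\xi|^2$ and $|C_j\xi\cdot\xi|\le\|C_j\||\xi|^2$. This yields
\[
A(z)\xi\cdot\xi\ge\Big(\lambda_{\min}(A_0)-\sum_{j=1}^M\left(\|B_j\|+\|C_j\|\right)\Big)|\xi|^2,
\]
and the constant in parentheses is strictly positive by \eqref{jihugyfctdxrsdctfvgbhn}; I would take it as $\lambda_A$. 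The upper bound follows from the norm estimate above, so I would take $\Lambda_A:=\|A_0\|+\sum_{j=1}^M(\|B_j\|+\|C_j\|)$.

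Lipschitz regularity comes last. Applying the mean value theorem to $t\mapsto\cos t$ and $t\mapsto\sin t$ gives $|\cos(\xi_j\cdot z)-\cos(\xi_j\cdot w)|\le|\xi_j||z-w|$, and similarly for sine. Hence
\[
\|A(z)-A(w)\|\le\sum_{j=1}^M|\xi_j|\left(\|B_j\|+\|C_j\|\right)|z-w|.
\]
Combined with boundedness, this shows $A\in C^{0,1}(\R^N,\R^{N\times N})$; in fact $A$ is $C^\infty$. By the remark following the proof of Theorem~\ref{thmreg2}, Lipschitz continuity with $\beta=1>s$ then gives \eqref{mkojihguyftdreswxrdcftvgybhjn} for every compact $K\subset\R^N$, so Theorem~\ref{thmreg2} applies.

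There is no real obstacle in this argument. The only point that needs care is choosing the norm in \eqref{jihugyfctdxrsdctfvgbhn} to be the operator norm, or any norm that dominates it such as the Frobenius norm, so that the bound $|B\xi\cdot\xi|\le\|B\||\xi|^2$ holds.
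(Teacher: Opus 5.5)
Your proof is correct and follows the paper's argument step by step. You get symmetry from the linear combination of symmetric matrices, the uniform bound from $|\cos|,|\sin|\le1$, and ellipticity with $\lambda_A=\lambda_{\min}(A_0)-\sum_{j=1}^M(\|B_j\|+\|C_j\|)$. The Lipschitz constant $\sum_{j=1}^M|\xi_j|(\|B_j\|+\|C_j\|)$ comes from the $1$-Lipschitz property of sine and cosine. The only cosmetic difference is that the paper uses the smallness assumption to bound $\|A(z)\|$ by $\|A_0\|+\lambda_{\min}(A_0)$, whereas you keep the sum explicit.
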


\begin{proof} Since~$A_0$, $B_j$, and~$C_j$ are symmetric matrices, we have that~$A(z)$ is symmetric as well.
Moreover, by~\eqref{jihugyfctdxrsdctfvgbhn}
\begin{eqnarray*}&&
\|A(z)\|\le\|A_0\| + \sum_{j=1}^M \|B_j\| |\cos(\xi_j\cdot z)| + \sum_{j=1}^M \|C_j\| |\sin(\xi_j\cdot z)|\\&&\qquad\quad\le
\|A_0\| + \sum_{j=1}^M \left(\|B_j\| + \|C_j\|\right)\le
\|A_0\|+\lambda_{\min}(A_0),
\end{eqnarray*}
which shows that $A(z)$ is bounded.

We prove next the uniform ellipticity. To this end, we set
\[
\lambda_0:=\lambda_{\min}(A_0)\quad\mbox{ and }\quad \lambda:=\lambda_0 -\sum_{j=1}^M\left(\|B_j\| + \|C_j\|\right).
\]
By \eqref{jihugyfctdxrsdctfvgbhn}, we have that $\lambda>0$. 

Since $A_0$ is positive definite, for every $\eta\in\R^N$,
\[
\eta\cdot A_0\eta\ge\lambda_0|\eta|^2.
\]
Furthermore, for every $j\in\{1,\dots, M\}$,
\[
|\eta\cdot B_j\eta|\le\|B_j\| |\eta|^2 \qquad\mbox{ and }\qquad |\eta\cdot C_j\eta|\le\|C_j\| |\eta|^2,
\]
Combining these pieces of information, we obtain that, for every $\eta\in\R^N$,
\[
\begin{split}&
\eta\cdot A(z)\eta=\eta\cdot A_0\eta + \sum_{j=1}^M\cos(\xi_j\cdot z)\eta\cdot B_j\eta + \sum_{j=1}^M
\sin(\xi_j\cdot z)\eta\cdot C_j\eta\\
&\qquad \ge\lambda_0|\eta|^2 - \sum_{j=1}^M \|B_j\| |\eta|^2 - \sum_{j=1}^M \|C_j\| |\eta|^2 = \lambda|\eta|^2.
\end{split}
\]

Moreover,
\[
\eta\cdot A(z)\eta\le\left(\|A_0\| + \sum_{j=1}^M \left(\|B_j\| + \|C_j\| \right)\right)|\eta|^2.
\]
Therefore, we have that $A(x)$ is uniformly elliptic.

Finally, we show that~$A(z)$ is globally Lipschitz. Indeed, we notice that
\[
|\cos a-\cos b|\le |a-b|\quad\mbox{ and }\quad |\sin a-\sin b|\le |a-b|.
\]
Thus, for every $z,w\in\R^N$,  we infer that
\[
\begin{split}
\|A(z)-A(w)\|&\le\sum_{j=1}^M \|B_j\| |\cos(\xi_j\cdot z)-\cos(\xi_j\cdot w)| + \sum_{j=1}^M \|C_j\| |\sin(\xi_j\cdot z)-\sin(\xi_j\cdot w)|\\
&\le\sum_{j=1}^M |\xi_j|\left(\|B_j\| + \|C_j\|\right)|z-w|,
\end{split}
\] which entails that~$A\in C^{0,1}\left(\R^N, \R^{N\times N}\right)$.
\end{proof}

We now determine the corresponding structure of the kernel. 
For this, for a constant and symmetric matrix~$M\in\R^{N\times N}$, we use the notation
\begin{equation}\label{kjnhbgvcfdxrctfvgbhjnk}
\mathbb K_{M,\xi}(h):=-\frac{d_{N,s}^2}{2}\lim_{\varepsilon\searrow0}\int_{\R^N\setminus\left(B_\varepsilon(h)\cup B_\varepsilon(-h)\right)}
\frac{w\cdot Mw-h\cdot Mh}{|w-h|^{N+s+1}|w+h|^{N+s+1}}\cos(\xi\cdot w) \,dw.
\end{equation}

We this notation, we can prove the following:

\begin{theorem}\label{theoremtrig}
For almost every $x\ne y$, we have
\begin{eqnarray*}
k_A(x,y)&=& k_{A_0}(x,y) + \sum_{j=1}^M \cos\left(\xi_j\cdot\frac{x+y}{2}\right)\mathbb K_{B_j,\xi_j}\left(\frac{x-y}{2}\right) \\&&\qquad+ \sum_{j=1}^M \sin\left(\xi_j\cdot\frac{x+y}{2}\right)\mathbb K_{C_j,\xi_j}\left(\frac{x-y}{2}\right).
\end{eqnarray*}
\end{theorem}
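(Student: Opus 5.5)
The plan is to compute the principal value in \eqref{CHARKE} directly for the matrix \eqref{Asincosdefn}, using linearity in $A$. By Proposition~\ref{jinhbugvyfctdxrsezaxrdctfvybhuijnkm}, $A$ is Lipschitz, hence satisfies \eqref{mkojihguyftdreswxrdcftvgybhjn} with $\beta=1>s$. So Theorem~\ref{thmreg2} and Lemma~\ref{lemmareg1} apply: for a.e.\ $x\ne y$ we have $k_A(x,y)=-\frac{d_{N,s}^2}{2}I_A(x,y)$, and the limit exists for every $x\neq y$. Since the truncated integral $I_{A,\varepsilon}$ in \eqref{kmjhbgvfcdxrcfvgb} is linear in $A$, we get
\[
I_{A,\varepsilon}=I_{A_0,\varepsilon}+\sum_j\big(I_{B_j\cos(\xi_j\cdot),\varepsilon}+I_{C_j\sin(\xi_j\cdot),\varepsilon}\big).
\]
Each summand is built from a smooth bounded matrix field, so each has its own limit by Lemma~\ref{lemmareg1}. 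The $A_0$ term then gives $k_{A_0}(x,y)$, which by Corollary~\ref{COINCIDONOCOST} (or directly by Theorem~\ref{thmreg2} with constant $A_0$) is the constant-coefficient kernel.

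For a single term, change variables to midpoint coordinates: set $m:=\frac{x+y}{2}$, $h:=\frac{x-y}{2}$ and $z=m+w$. Then $z-x=w-h$, $z-y=w+h$, and for symmetric $M$,
\[
(z-x)\cdot M(z-y)=(w-h)\cdot M(w+h)=w\cdot Mw-h\cdot Mh.
\]
The excised region $B_\varepsilon(x)\cup B_\varepsilon(y)$ becomes $B_\varepsilon(h)\cup B_\varepsilon(-h)$, and the denominator becomes $|w-h|^{N+s+1}|w+h|^{N+s+1}$. Next expand
\[
\cos(\xi\cdot z)=\cos(\xi\cdot m)\cos(\xi\cdot w)-\sin(\xi\cdot m)\sin(\xi\cdot w).
\]
The key observation is that the weight $\frac{w\cdot Mw-h\cdot Mh}{|w-h|^{N+s+1}|w+h|^{N+s+1}}$ and the excised domain are both invariant under $w\mapsto -w$, while $\sin(\xi\cdot w)$ is odd. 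The sine part of each truncated integral therefore vanishes identically for every $\varepsilon$. Hence
\[
I_{B\cos(\xi\cdot),\varepsilon}(x,y)=\cos(\xi\cdot m)\int_{\R^N\setminus(B_\varepsilon(h)\cup B_\varepsilon(-h))}\frac{w\cdot Bw-h\cdot Bh}{|w-h|^{N+s+1}|w+h|^{N+s+1}}\cos(\xi\cdot w)\,dw.
\]
Similarly, $\sin(\xi\cdot z)=\sin(\xi\cdot m)\cos(\xi\cdot w)+\cos(\xi\cdot m)\sin(\xi\cdot w)$, and only $\sin(\xi\cdot m)$ times the same cosine integral survives.

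Letting $\varepsilon\searrow0$ and multiplying by $-\frac{d_{N,s}^2}{2}$ produces exactly the terms $\cos(\xi_j\cdot m)\,\mathbb K_{B_j,\xi_j}(h)$ and $\sin(\xi_j\cdot m)\,\mathbb K_{C_j,\xi_j}(h)$ of \eqref{kjnhbgvcfdxrctfvgbhjnk}. The step needing care is the existence of the limit defining $\mathbb K_{M,\xi}(h)$, which requires $h\neq0$, i.e.\ $x\ne y$. When $\cos(\xi\cdot m)\neq0$ it follows from the existence of the limit for the field $B\cos(\xi\cdot z)$ via Lemma~\ref{lemmareg1}. To avoid dividing by a vanishing factor, I would instead prove it directly: apply the argument of Lemma~\ref{lemmareg1} to the smooth field $w\mapsto M\cos(\xi\cdot w)$, which is even in $w$, with singular points $\pm h$. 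Alternatively, note that the sum of the PV integrals for $M\cos(\xi\cdot z)$ and $M\sin(\xi\cdot z)$, weighted by $\cos(\xi\cdot m)$ and $\sin(\xi\cdot m)$, isolates the quantity, since $\cos^2+\sin^2=1$.

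Summing all contributions gives the stated identity for a.e.\ $x\ne y$, the a.e.\ restriction coming from Theorem~\ref{thmreg2}. I expect this existence-of-the-limit point to be the only genuine obstacle; the rest is algebra plus the odd-symmetry cancellation.
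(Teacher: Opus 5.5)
Your proposal is correct and follows essentially the same route as the paper. The shared steps are the midpoint coordinates $z=m+w$, the identity $(w-h)\cdot M(w+h)=w\cdot Mw-h\cdot Mh$, the addition formulas, and the evenness of the weight together with the symmetry of $\R^N\setminus(B_\varepsilon(h)\cup B_\varepsilon(-h))$ to kill the $\sin(\xi\cdot w)$ contributions; your direct justification that the limit in $\mathbb K_{M,\xi}(h)$ exists, namely Lemma~\ref{lemmareg1} applied to the smooth symmetric field $w\mapsto M\cos(\xi\cdot w)$ at the points $\pm h$, is exactly what the paper does (and your $\cos^2+\sin^2=1$ alternative is also valid).
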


\begin{proof}
We first point out that, if~$M\in\R^{N\times N}$ is a 
constant and symmetric matrix and~$\xi\in\R^N$, then
the matrix
$$ A_\xi(w):=M\cos(\xi\cdot w) $$ is bounded, symmetric and smooth.
Accordingly,
Lemma~\ref{lemmareg1} can be employed, guaranteeing
that the limit in the definition of~$I_A(x,y)$ in~\eqref{INT2PVdefn} exists and is finite for every~$x\neq y$. In particular, for every~$h\neq0$, the limit
\begin{equation}\label{cnewtiy43782592gy678dghj}\lim_{\varepsilon\searrow0}\int_{\R^N\setminus\left(B_\varepsilon(h)\cup B_\varepsilon(-h)\right)}
\frac{(w-h)\cdot M(w+h)}{|w-h|^{N+s+1}|w+h|^{N+s+1}}\cos(\xi\cdot w) \,dw
\end{equation}
exists and is finite.

Moreover, since $M$ is symmetric, we have that
\begin{equation}\label{kjhbgvfcdxcfvghbjnm}
(w-h)\cdot M(w+h) = w\cdot Mw+w\cdot Mh-h\cdot Mw-h\cdot Mh = w\cdot Mw-h\cdot Mh.
\end{equation}
Therefore, in light of~\eqref{cnewtiy43782592gy678dghj}, the limit
in the definition~\eqref{kjnhbgvcfdxrctfvgbhjnk}
exists and is finite.

Let now~$x, y\in\R^N$ with $x\ne y$ and set
\begin{equation}\label{mhdefn}
m:=\frac{x+y}{2}\qquad\mbox{ and }\qquad h:=\frac{x-y}{2}.
\end{equation}
In this way, $x=m+h$ and $y=m-h$. 

By Theorem \ref{thmreg2} (which can be applied, in lieu of
Proposition~\ref{jinhbugvyfctdxrsezaxrdctfvybhuijnkm}), we have
\[
k_A(x,y) = -\frac{d_{N,s}^2}{2}\lim_{\varepsilon\searrow0}\int_{\R^N\setminus\left(B_\varepsilon(x)\cup B_\varepsilon(y)\right)}\frac{(z-x)\cdot A(z)(z-y)}{|z-x|^{N+s+1}|z-y|^{N+s+1}} \,dz.
\]
We use the change of variable~$z:=m+w$. In this way,
$z-x=w-h$ and $z-y=w+h$, hence
\[
k_A(x,y) = -\frac{d_{N,s}^2}{2} \lim_{\varepsilon\searrow 0}
\int_{D_{\varepsilon,h}}\frac{(w-h)\cdot A(m+w)(w+h)}{|w-h|^{N+s+1}|w+h|^{N+s+1}} \,dw,
\]
where 
$$ D_{\varepsilon,h}:=\R^N\setminus\left(B_\varepsilon(h)\cup B_\varepsilon(-h)\right).$$
We point out that 
\begin{equation}\label{cewt87934ytwqhirtSDFGH}
{\mbox{the set~$D_{\varepsilon,h}$ is symmetric with respect to~$0$.}}
\end{equation}

We thus set
\[
G_{M,h}(w):=\frac{w\cdot Mw-h\cdot Mh}{|w-h|^{N+s+1}|w+h|^{N+s+1}}
\]
and we notice that
\begin{equation}\label{cewt87934ytwqhirtSDFGH2}
G_{M,h}(-w)=G_{M,h}(w).
\end{equation}

With this notation, and recalling~\eqref{kjhbgvfcdxcfvghbjnm},
we have that
\begin{equation}\label{ewuty34789tyowi2345678gvchcvdjhewfPP}
\begin{split}
k_A(x,y)&= -\frac{d_{N,s}^2}{2} \lim_{\varepsilon\searrow 0}\Bigg( \;\int_{D_{\varepsilon,h}}
 G_{A_0,h}(w)\,dw\\
 &\; +\int_{D_{\varepsilon,h}} \sum_{j=1}^M\Big(
G_{B_j,h}(w)\cos(\xi_j\cdot (m+w)) + G_{C_j,h}(w)\sin(\xi_j\cdot (m+w))\Big)\,dw\Bigg).\end{split}\end{equation}

Now, since
\[
\cos(\xi_j\cdot(m+w)) = \cos(\xi_j\cdot m)\cos(\xi_j\cdot w) - \sin(\xi_j\cdot m)\sin(\xi_j\cdot w),
\]
we infer that
\[
\begin{split}&
\int_{D_{\varepsilon,h}}G_{B_j,h}(w)\cos(\xi_j\cdot(m+w)) \,dw\\&\quad= \cos(\xi_j\cdot m)\int_{D_{\varepsilon,h}} G_{B_j,h}(w)\cos(\xi_j\cdot w) \,dw- \sin(\xi_j\cdot m) \int_{D_{\varepsilon,h}} G_{B_j,h}(w)\sin(\xi_j\cdot w)  \,dw.
\end{split}
\]
Thus, recalling~\eqref{cewt87934ytwqhirtSDFGH} and~\eqref{cewt87934ytwqhirtSDFGH2}, we conclude that
\[
\int_{D_{\varepsilon,h}} G_{B_j,h}(w)\cos(\xi_j\cdot(m+w))\,dw= \cos(\xi_j\cdot m)\int_{D_{\varepsilon,h}} G_{B_j,h}(w)\cos(\xi_j\cdot w)\, dw.
\]

In a similar manner,
\[
\int_{D_{\varepsilon,h}} G_{C_j,h}(w) \sin(\xi_j\cdot(m+w))\,dw =\sin(\xi_j\cdot m) \int_{D_{\varepsilon,h}} G_{C_j,h}(w)\cos(\xi_j\cdot w) \,dw.
\]
Plugging the last two displays into~\eqref{ewuty34789tyowi2345678gvchcvdjhewfPP}, 
and recalling the notation in~\eqref{kjnhbgvcfdxrctfvgbhjnk}, we thus find that
\[
k_A(x,y)= k_{A_0}(x,y) + \sum_{j=1}^M \cos(\xi_j\cdot m)\mathbb K_{B_j,\xi_j}(h) + \sum_{j=1}^M \sin(\xi_j\cdot m)\mathbb K_{C_j,\xi_j}(h).
\] From this and \eqref{mhdefn}, we obtain the desired result.
\end{proof}

\section{Proof of Theorem~\ref{ESPONENTS}}\label{è+òplkojihugy87ft6r58ed4ws34aszxrdcft}

To establish Theorem~\ref{ESPONENTS}, we rely on an averaging result.
The gist is that the desired bound holds true for constant matrices, e.g. owing to the representation formula given in Corollary~\ref{COINCIDONOCOST},
and then the boundedness condition on a nonconstant matrix allows, in average, to preserve this bound. The regularity assumption~\eqref{mkojihguyftdreswxrdcftvgybhjn} guarantees
that the singular integrals defining~$k_A(x, y)$ are well defined and allows us to pass to the limit in the averaging procedure. The details go as follows.

\begin{lemma}
Let $A\in L^\infty(\R^N,\R^{N\times N})$ satisfy~\eqref{mkojihguyftdreswxrdcftvgybhjn}
for every compact set~$K\subset\R^N$.
Let~$k_A$ be given by~\eqref{CHARKE}.

Then, there exists $C>0$, depending only on $N$, $s$ and $\|A\|_{L^\infty(\R^N, \R^{N\times N})}$, such that, for every~$x_\star\in\R^N$, $h\in\R^N\setminus B_1$ and $L>0$,
\begin{equation}\label{ILle2s}
\left|\frac1{(2L)^N}\int_{x_\star+[-L,L]^N}k_A(x,x+h)\,dx\right|\le\frac{C}{|h|^{N+2s}}\left(1+\frac{|h|}{L}\right).
\end{equation}
\end{lemma}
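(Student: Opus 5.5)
We plan to average the representation \eqref{CHARKE} directly and to exploit the fact that spatial averages of a bounded matrix field are slowly varying. Fix $x_\star$, $h\in\R^N\setminus B_1$, $L>0$, and set $Q:=x_\star+[-L,L]^N$. In \eqref{CHARKE} with $y=x+h$, we substitute $z=x+w$, which gives
\[
k_A(x,x+h)=-\frac{d_{N,s}^2}{2}\lim_{\varepsilon\searrow0}\int_{\R^N\setminus(B_\varepsilon(0)\cup B_\varepsilon(h))}\frac{w\cdot A(x+w)(w-h)}{|w|^{N+s+1}|w-h|^{N+s+1}}\,dw .
\]
The set $E:=\{(x,x+h):x\in Q\}$ is a compact subset of $\R^{2N}\setminus\Delta$. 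By Lemma~\ref{lemmareg1}, the truncated integrals converge uniformly on $E$, so we may exchange the limit in $\varepsilon$ with the average over $Q$. For each fixed $\varepsilon$, the truncated integrand is absolutely integrable in $(x,w)$, so Fubini's theorem applies. We therefore obtain
\[
\frac1{(2L)^N}\int_Q k_A(x,x+h)\,dx=-\frac{d_{N,s}^2}{2}\lim_{\varepsilon\searrow0}\int_{\R^N\setminus(B_\varepsilon(0)\cup B_\varepsilon(h))}\frac{w\cdot \bar A_L(w)(w-h)}{|w|^{N+s+1}|w-h|^{N+s+1}}\,dw,
\]
where $\bar A_L(w):=(2L)^{-N}\int_Q A(x+w)\,dx$. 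The key elementary facts are that $\bar A_L$ is symmetric with $|\bar A_L|\le\|A\|_{L^\infty}$, and that it is Lipschitz with constant $C_N\|A\|_{L^\infty}/L$. The Lipschitz bound holds because the symmetric difference of $Q$ and $Q+v$ has measure at most $C_N|v|L^{N-1}$; it requires no regularity of $A$.

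Next we split $\bar A_L(w)=\bar A_L(0)+R(w)$, where $R$ satisfies $|R(w)|\le C\min\{1,|w|/L\}$ and $\mathrm{Lip}(R)\le C/L$. The constant part $\bar A_L(0)$ contributes exactly the constant-coefficient kernel evaluated at $(0,h)$. By Corollary~\ref{COINCIDONOCOST}, or more precisely by the computation behind it identifying the principal value with formula \eqref{PARTE1COR1.5}, this contribution is bounded by $C(N,s)\|A\|_{L^\infty}|h|^{-N-2s}$.

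For the remainder term we split $\R^N$ into three regions: $B_{|h|/2}(0)$, $B_{|h|/2}(h)$, and the complement of their union.
\begin{itemize}
\item On $B_{|h|/2}(0)$ no principal value is needed. Since $|R(w)|\le C|w|/L$, the integrand is bounded by $C L^{-1}|w|^{1-N-s}|h|^{-N-s}$, and integrating gives $C|h|^{1-N-2s}/L$.
\item On the complement of both balls, we use $|R(w)|\le C|w|/L$ together with $|w|,|w-h|\gtrsim|h|$ and, for large $w$, $|w-h|\gtrsim|w|$. Since $1-2N-2s<-N$, the integral converges at infinity, and it is again bounded by $C|h|^{1-N-2s}/L$.
\item On $B_{|h|/2}(h)$ we subtract $\frac{h\cdot R(h)(w-h)}{|h|^{N+s+1}|w-h|^{N+s+1}}$. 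This term is odd about $h$, so it integrates to zero over the symmetric annuli around $h$. What remains is controlled by $|w-h|^{-N-s}$ times the oscillation of $w\mapsto H_0(w)^{T}R(w)$ over distance $|w-h|$. That oscillation is at most $C|w-h|\bigl(L^{-1}|h|^{-N-s}+|R(h)|\,|h|^{-N-s-1}\bigr)\le C|w-h|L^{-1}|h|^{-N-s}$, using $|R(h)|\le C|h|/L$. Integrating $|w-h|^{1-N-s}$ over $B_{|h|/2}(h)$ then yields $C|h|^{1-N-2s}/L$.
\end{itemize}
Summing the constant part and the three remainder contributions gives \eqref{ILle2s}.

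The step we expect to be the main obstacle is the region near $w=h$, where the principal value is genuinely needed. The difficulty is that the cancellation there must be combined with the slow variation of $R$, while keeping all constants dependent only on $N$, $s$, $\|A\|_{L^\infty}$; in particular they must not depend on the Dini modulus, which enters only to justify the interchange of limits. A secondary point needing care is the rigorous justification of identifying the principal value for the constant matrix $\bar A_L(0)$ with formula \eqref{PARTE1COR1.5}. For this we would invoke Lemma~\ref{lemmareg1} together with Corollary~\ref{COINCIDONOCOST}, noting that for a constant matrix the kernel is continuous off the diagonal, so the almost-everywhere identity holds everywhere.
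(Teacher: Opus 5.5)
Your proposal is correct and follows essentially the same route as the paper. It uses the same averaged matrix $\overline A_L$ with its $C/L$ Lipschitz bound from the symmetric difference of cubes, the same comparison with the constant-coefficient kernel of $\overline A_L(0)$ via Corollary~\ref{COINCIDONOCOST}, and the same split into $B_{|h|/2}$, $B_{|h|/2}(h)$ and their complement. The differences are only cosmetic: you cite Lemma~\ref{lemmareg1} for the uniform convergence that the paper re-proves, and near $w=h$ you subtract the first-order odd term using $|R(h)|\le C|h|/L$, whereas the paper splits off the constant matrix $\overline A_L(h)-A_L$ and bounds $\int_{B_{|h|/2}(h)}\phi_{ij}$ directly.
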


\begin{proof}
We use the notation
\begin{equation}\label{5.1BIS}
A_L:=\frac1{(2L)^N}\int_{x_\star+[-L,L]^N}A(x) \,dx.
\end{equation}
Given $h\in\R^N\setminus B_1$, we define, for every~$i,j\in\{1,\dots,N\}$,
\[
\R^N\ni w\longmapsto\phi_{i,j}(w):=\frac{w_i(w-h)_j}{|w|^{N+s+1}|w-h|^{N+s+1}}
\]
and consider the inner product for matrices
\[
\langle A,\phi\rangle:=\sum_{i,j=1}^N A_{ij}\phi_{ij}.
\]
By~\eqref{CHARKE}, here used with $y:=x+h$, and changing variable~$w:=z-x$, we obtain
\begin{equation}\label{kernelshifted}
k_A(x,x+h) = -\frac{d_{N,s}^2}{2}\lim_{\varepsilon\searrow0}\int_{\R^N\setminus(B_\varepsilon\cup B_\varepsilon(h))}\langle A(x+w),\phi(w)\rangle\,dw.
\end{equation}

Now, given~$x_\star\in\R^N$, let $Q_L:=x_\star+[-L,L]^N$.  Let $\varepsilon>0$ and define
\[
F_\varepsilon(x):=\int_{\R^N\setminus(B_\varepsilon\cup B_\varepsilon(h))}\langle A(x+w),\phi(w)\rangle\,dw.
\]
We claim that 
\begin{equation}\label{5.2BIS}
{\mbox{$F_\varepsilon$ converges uniformly on $Q_L$ as $\varepsilon\searrow0$.}}\end{equation}
We notice that the integrand is absolutely integrable away from the singularities at~$w=0$ and~$w=h$. We set $\rho:=1/4$. Since $|h|>1$, we have that $B_\rho$ and $B_\rho(h)$ are disjoint. Moreover, set
\[
K:=\left(Q_L+\overline{B_\rho}\right)\cup\left(Q_L+h+\overline{B_\rho}\right)
\]
and we notice that~$K$ is compact. 

We first consider the singularity at $w=0$. For $x\in Q_L$ and $w\in B_\rho$, we have that~$|w-h|\ge\frac34 |h|$ in $B_\rho$, thus
\[
|\phi(w)|\le\frac{C}{|h|^{N+s}|w|^{N+s}}.
\]
Hence, by \eqref{mkojihguyftdreswxrdcftvgybhjn},
\begin{equation}\label{5.2TER}
\int_{B_\rho}\left|\left\langle A(x+w)-A(x),\phi(w)\right\rangle\right| dw\le\frac{C}{|h|^{N+s}}\int_0^\rho\frac{\omega_{A,K}(r)}{r^{1+s}} dr <+\infty
\end{equation}
uniformly with respect to $x\in Q_L$. 

Also, for every $i,j\in\{1,\dots, N\}$, we set
\[
q_j(w):=\frac{(w-h)_j}{|w-h|^{N+s+1}}.
\]
Then,
\[
\phi_{ij}(w)=\frac{w_i}{|w|^{N+s+1}}q_j(0) + \frac{w_i}{|w|^{N+s+1}}(q_j(w)-q_j(0)).
\]
Now, since $w_i|w|^{-N-s-1}$ is odd, the first term has vanishing integral on every annulus centered at zero. Moreover, since
\[
|q_j(w)-q_j(0)|\le\frac{C|w|}{|h|^{N+s+1}},\quad {\mbox{ for all }}
w\in B_\rho,
\]
we get
\[
\left|\frac{w_i}{|w|^{N+s+1}}(q_j(w)-q_j(0))\right|\le\frac{C}{|h|^{N+s+1}|w|^{N+s-1}},
\]
which is integrable near zero.
This entails that
$$ \int_{B_\rho}\left|\left\langle A(x),\phi(w)\right\rangle\right|\,dx
<\infty.
$$
From this and~\eqref{5.2TER} we obtain the uniform integrability near the singularity~$w=0$.

The singularity at $w=h$ is treated in exactly the same way, after setting $t:=w-h$,  and then subctracting $A(x+h)$. 
These estimates imply that, for $0<\varepsilon'<\varepsilon<\rho$,
\[
\sup_{x\in Q_L} |F_{\varepsilon'}(x)-F_\varepsilon(x)|\le\frac{C}{|h|^{N+s}}\int_0^\varepsilon\frac{\omega_{A,K}(r)}{r^{1+s}} dr + \frac{C\|A\|_{L^\infty(\R^N, \R^{N\times N})}}{|h|^{N+s+1}}\varepsilon^{1-s}.
\]
By \eqref{mkojihguyftdreswxrdcftvgybhjn},
the right hand side tends to $0$ as $\varepsilon\searrow 0$.
Therefore, $F_\varepsilon$ is a uniformly Cauchy sequence on~$Q_L$.
This entails the claim in~\eqref{5.2BIS}.

Accordingly, in lieu of~\eqref{5.2BIS}, the limit in~\eqref{kernelshifted} is uniform for~$x\in Q_L$,
hence we can  average with respect to $x$ and then pass to the limit as $\varepsilon\searrow0$.

We set
\[
\overline A_L(w):=\frac1{(2L)^N}\int_{x_\star+[-L,L]^N}A(w+x) \,dx.
\]
By Fubini's theorem, we have
\[
\frac1{(2L)^N}\int_{x_\star+[-L,L]^N}k_A(x,x+h)\,dx = -\frac{d_{N,s}^2}{2}\lim_{\varepsilon\searrow0}\int_{\R^N\setminus(B_\varepsilon\cup B_\varepsilon(h))}\langle\overline A_L(w),\phi(w)\rangle\, dw.
\]
Also, replacing $A$ with the constant matrix $A_L$ in~\eqref{5.1BIS},
\[
\frac1{(2L)^N}\int_{x_\star+[-L,L]^N} k_{  A_L}(x,x+h) \,dx = -\frac{d_{N,s}^2}{2}\lim_{\varepsilon\searrow0}\int_{\R^N\setminus(B_\varepsilon\cup B_\varepsilon(h))}\langle A_L,\phi(w)\rangle\, dw.
\]
Therefore, we obtain that
\begin{equation}\label{differenceaverage}
\begin{split}
&\left|\frac1{(2L)^N}\int_{x_\star+[-L,L]^N}k_A(x,x+h)\,dx - \frac1{(2L)^N}\int_{x_\star+[-L,L]^N} k_{A_L}(x,x+h)\, dx\right|\\
&\quad\le C \limsup_{\varepsilon\searrow0}\left|\;\int_{\R^N\setminus (B_\varepsilon\cup B_\varepsilon(h))}\left\langle\overline A_L(w)- A_L,\phi(w)
\right\rangle\, dw \right|.
\end{split}
\end{equation}

Furthermore, using the symbol ``$\triangle$'' to denote the symmetric difference of two sets and looking at the measure of the symmetric difference for shifted cubes, we have that, for all~$w$, $\widetilde w\in\R^N$,
\begin{equation}\label{SQsUHoBOS}\begin{split}
|\overline{A}_L(w)-\overline{A}_L(\widetilde w)|&=\frac1{(2L)^N}
\left| \;\int_{w+{x_\star+[-L,L]^N}}A(y)\,dy-\int_{\widetilde w+{x_\star+[-L,L]^N}}A(y)\,dy\right|\\&
\le\frac1{(2L)^N}\int_{(w+{x_\star+[-L,L]^N})\triangle(\widetilde w+{x_\star+[-L,L]^N})}|A(y)|\,dy\\&\le
\frac{C\,\|A\|_{L^\infty(\R^N, \R^{N\times N})}\big|(w+{x_\star+[-L,L]^N})\triangle(\widetilde w+{x_\star+[-L,L]^N})\big|}{(2L)^N}\\&\le
\frac{C\,|w-\widetilde w|}{L},
\end{split}
\end{equation}
up to renaming~$C$ at each step of the calculation.

We use this with $\widetilde w:=0$, observing that $A_L=\overline A_L(0)$, and conclude that
\begin{equation}\label{PKJSMDjpdmewyj054nbk3-03431b}
\begin{split}&
\left|\;\int_{B_{|h|/2}} \langle \big(\overline{A}_L(w)- {A}_L\big),\phi(w)\rangle\,dw\right|\le\frac{
C}L\int_{B_{|h|/2}}|w|\, |\phi(w)|\,dw
\\&\qquad\le \frac{C}{L} \int_{B_{|h|/2}}\frac{dw}{|w|^{N+s-1}|w-h|^{N+s}}\le
\frac{C}{L|h|^{N+s}} \int_{B_{|h|/2}}\frac{dw}{|w|^{N+s-1}}\le\frac{C}{L|h|^{N+2s-1}}.
\end{split}
\end{equation}
On a similar note, using~\eqref{SQsUHoBOS} with
$\widetilde w:=h$, we get\begin{equation}\label{PKJSMDjpdmewyj054nbk3-03431}
\begin{split}&
\left|\;\int_{B_{|h|/2}(h)} \langle \big(\overline{A}_L(w)-\overline{A}_L(h)\big),\phi(w)\rangle\,dw\right|\le\frac{
C}L\int_{B_{|h|/2}(h)} |w-h|\, |\phi(w)|\,dw\\&\qquad\le\frac{ C}L \int_{B_{|h|/2}(h)}\frac{\,dw}{|w|^{N+s}|w-h|^{N+s-1}}\le
\frac{C}{L|h|^{N+s}} \int_{B_{|h|/2}(h)}\frac{dw}{|w-h|^{N+s-1}}\le\frac{C}{L|h|^{N+2s-1}}.
\end{split}
\end{equation}

We now estimate the remaining constant contribution in $B_{|h|/2}(h)$. By using the change of variables $t:=\pm(w-h) $, we obtain that
\begin{eqnarray*}
&&2\int_{B_{|h|/2}(h)} \phi_{ij}(w)\,dw=
\int_{B_{|h|/2}} \phi_{ij}(h+t)\,dt+\int_{B_{|h|/2}} \phi_{ij}(h-t)\,dt\\&&\qquad=
\int_{B_{|h|/2}}\left(\frac{(h+t)_i \,t_j}{|h+t|^{N+s+1}|t|^{N+s+1}}
-\frac{(h-t)_i \,t_j}{|h-t|^{N+s+1}|t|^{N+s+1}}\right)\,dt\\&&\qquad=
\int_{B_{|h|/2}}\left(\frac{(h+t)_i \,t_j}{|h+t|^{N+s+1}|t|^{N+s+1}}
-\frac{(h+t)_i \,t_j}{|h-t|^{N+s+1}|t|^{N+s+1}}\right)\,dt\\
&&\qquad\qquad+
\int_{B_{|h|/2}}\left(\frac{(h+t)_i \,t_j}{|h-t|^{N+s+1}|t|^{N+s+1}}
-\frac{(h-t)_i \,t_j}{|h-t|^{N+s+1}|t|^{N+s+1}}\right)\,dt\\&&\qquad=
\int_{B_{|h|/2}} \frac{(h+t)_i\,t_j}{|t|^{N+s+1}}\left(\frac{1}{|h+t|^{N+s+1}}
-\frac{1}{|h-t|^{N+s+1}}\right)\,dt+2
\int_{B_{|h|/2}}\frac{t_i\,t_j}{|t|^{N+s+1}|h-t|^{N+s+1}}\,dt\\
&&\qquad=\int_{B_{|h|/2}}\left(\frac{(h+t)_i t_j}{|h+t|^{N+s+1}|t|^{N+s+1}} - \frac{(h-t)_i t_j}{|h-t|^{N+s+1}|t|^{N+s+1}}\right) dt.
\end{eqnarray*}
Thus, since, for all~$t\in B_{|h|/2}$,
\begin{eqnarray*}&&\left| |h+t|^{-(N+s+1)}-|h-t|^{-(N+s+1)}\right|=
|h|^{-(N+s+1)}\left| \left|\frac{h}{|h|}+\frac{t}{|h|}\right|^{-(N+s+1)}-\left|\frac{h}{|h|}-\frac{t}{|h|}\right|^{-(N+s+1)}\right|\\&&\qquad\le C|h|^{-(N+s+2)}|t|,
\end{eqnarray*}
we deduce that
\[
\left|\;\int_{B_{|h|/2}(h)} \phi_{ij}(w)\,dw\right|\le C\int_{B_{|h|/2}} \frac{(|h|+|t|)\,dt}{|t|^{N+s-1}|h|^{N+s+2}} + C\int_{B_{|h|/2}}\frac{dt}{|t|^{N+s-1}|h-t|^{N+s+1}}\le\frac{C}{|h|^{N+2s}}.
\]
On this account,
\begin{equation*}
\begin{split}&
\left|\;\int_{B_{|h|/2}(h)} \langle \big(\overline{A}_L(h)-{A}_L\big),\phi(w)\rangle\,dw\right|\le
\sum_{i,j=1}^N\left|\big(\overline{A}_L(h)-{A}_L\big)_{ij}\int_{B_{|h|/2}(h)} \phi_{ij}(w)\,dw\right|\\&\qquad\le
C\|A\|_{L^\infty(\R^N, \R^{N\times N})}\sum_{i,j=1}^N\left|\;\int_{B_{|h|/2}(h)} \phi_{ij}(w)\,dw\right|\le\frac{C}{|h|^{N+2s}}.
\end{split}
\end{equation*}
Combining the latter inequality with \eqref{PKJSMDjpdmewyj054nbk3-03431}, we arrive at
\[
\left|\;
\int_{B_{|h|/2}(h)}
\left\langle
\overline A_L(w)- A_L,\phi(w)
\right\rangle\,dw
\right|
\le
\frac{C}{|h|^{N+2s}}
+
\frac{C}{L|h|^{N+2s-1}}.
\]
Finally,
\[
\begin{split}
&\left|\;\int_{\R^N\setminus(B_{|h|/2}\cup B_{|h|/2}(h))}\left\langle\overline A_L(w)-A_L,\phi(w)\right\rangle\,dw\right|\\
&\quad\le C\|A\|_{L^\infty(\R^N, \R^{N\times N})}\int_{\R^N\setminus(B_{|h|/2}\cup B_{|h|/2}(h))}\frac{dw}{|w|^{N+s}|w-h|^{N+s}}\\
&\quad\le\frac{C}{|h|^{N+s}}\int_{\R^N\setminus B_{|h|/2}}\frac{dw}{|w|^{N+s}}\le\frac{C}{|h|^{N+2s}}.
\end{split}
\]
As a consequence, combining this with~\eqref{PKJSMDjpdmewyj054nbk3-03431b} and recalling \eqref{differenceaverage}, we get
\begin{equation}\label{OtJLhy8uVEfmECa34oBs}
\begin{split}
&\left|\frac1{(2L)^N}\int_{x_\star+[-L,L]^N}k_A(x,x+h) \,dx-\frac1{(2L)^N}\int_{x_\star+[-L,L]^N}k_{  A_L}(x,x+h) \,dx\right|\\
&\qquad\le\frac{C}{|h|^{N+2s}} + \frac{C}{L|h|^{N+2s-1}}.
\end{split}
\end{equation}

Now, in virtue of Corollary \ref{COINCIDONOCOST},
\[
\left|k_{  A_L}(x,x+h)\right| = \left|
\frac{c_{N,s}}{4s|h|^{N+2s}}\left((N+2s)\frac{h\cdot A_Lh}{|h|^2} - \tr(A_L)\right)\right| \le\frac{C}{|h|^{N+2s}},
\]
and accordingly
\[
\left|\frac1{(2L)^N}\int_{x_\star+[-L,L]^N} k_{ A_L}(x,x+h) dx\right|\le\frac{C}{|h|^{N+2s}}.
\]
Combining this with~\eqref{OtJLhy8uVEfmECa34oBs}, we conclude that
\[
\left|\frac1{(2L)^N}\int_{x_\star+[-L,L]^N}k_A(x,x+h) dx\right|\le\frac{C}{|h|^{N+2s}} + \frac{C}{L|h|^{N+2s-1}}=\frac{C}{|h|^{N+2s}}\left(1+\frac{|h|}{L}\right)
\]
This proves \eqref{ILle2s}, as desired.
\end{proof}

We are now in the position to prove Theorem~\ref{ESPONENTS}.
\begin{proof}[Proof of Theorem~\ref{ESPONENTS}]
The desired result follows by
sending $L\to+\infty$ in~\eqref{ILle2s}.
\end{proof}

\section{Proof of Theorem \ref{LOCALETRACE}}\label{èòkmnjibhuyctxdsza<azsxdcfv}

We provide the following preliminary statement.

\begin{proposition}\label{FREEZING}
Let $A\in L^\infty(\R^N,\R^{N\times N})$ be symmetric and satisfy~\eqref{mkojihguyftdreswxrdcftvgybhjn} for every compact set~$K\subset\R^N$. Let $k_A$ be given by~\eqref{CHARKE}.

Then, for every compact set $K\subset\R^N$,
\[
\lim_{r\searrow0}\sup_{\substack{x\in K\\e\in\mathbb S^{N-1}}}\left| r^{N+2s}k_A(x,x+re)-\frac{c_{N,s}}{4s}\left((N+2s)e\cdot A(x)e-\tr(A(x)\right)\right|=0.
\]
\end{proposition}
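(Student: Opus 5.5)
The plan is to compare $k_A(x,x+re)$ with the kernel of the frozen constant matrix $A(x)$, whose explicit form is given by Corollary~\ref{COINCIDONOCOST}. By \eqref{CHARKE} (more precisely, by the pointwise principal value $\widetilde k_A$ of Lemma~\ref{lemmareg1}, which we take as the representative of $k_A$), the substitution $z=x+rw$ gives
\[
r^{N+2s}k_A(x,x+re)=-\frac{d_{N,s}^2}{2}\lim_{\varepsilon\searrow0}\int_{\R^N\setminus(B_{\varepsilon}\cup B_{\varepsilon}(e))}\frac{w\cdot A(x+rw)(w-e)}{|w|^{N+s+1}|w-e|^{N+s+1}}\,dw .
\]
When $A(x+rw)$ is replaced by $A(x)$, the same computation (scaling invariance of the constant-coefficient kernel) shows that the right-hand side equals $r^{N+2s}k_{A(x)}(x,x+re)$. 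By Corollary~\ref{COINCIDONOCOST}, together with the uniqueness furnished by Theorem~\ref{thmreg2} and the existence of the constant-coefficient representation from \cite{DSVPART1}, this is exactly $\frac{c_{N,s}}{4s}\big((N+2s)e\cdot A(x)e-\tr A(x)\big)$. It therefore suffices to show that
\[
\sup_{x\in K,\,e\in\mathbb S^{N-1}}\left|\lim_{\varepsilon\searrow0}\int_{\R^N\setminus(B_{\varepsilon}\cup B_{\varepsilon}(e))}\frac{w\cdot\big(A(x+rw)-A(x)\big)(w-e)}{|w|^{N+s+1}|w-e|^{N+s+1}}\,dw\right|\longrightarrow0\qquad\mbox{as } r\searrow0 .
\]

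To prove this, split $\R^N$ into three regions: $B_{1/4}$, $B_{1/4}(e)$, and the rest. The complement is further divided into $\{|w|\le R\}$ and $\{|w|>R\}$.
\begin{enumerate}
\item[(a)] On the far region, $|A(x+rw)-A(x)|\le 2\|A\|_{L^\infty}$ and the integrand is bounded by $C|w|^{-2N-2s}$. Its contribution is therefore at most $CR^{-N-2s}$, uniformly in $x,e,r$.
\item[(b)] On the bounded region away from both singularities, use $|A(x+rw)-A(x)|\le\omega_{A,K'}(rR)$ with $K':=K+\overline{B_{R}}$. Since \eqref{mkojihguyftdreswxrdcftvgybhjn} forces $\omega_{A,K'}(t)\to0$ as $t\searrow0$, this contribution tends to $0$ as $r\searrow0$ for every fixed $R$.
\item[(c)] Near $w=0$ the integrand is absolutely integrable without any principal value. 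Indeed, $|A(x+rw)-A(x)|\le\omega_{A,K'}(r|w|)$ and $|w-e|\ge 3/4$, so the contribution is bounded by
\[
C\int_0^{1/4}\frac{\omega_{A,K'}(r\rho)}{\rho^{1+s}}\,d\rho=Cr^{s}\int_0^{r/4}\frac{\omega_{A,K'}(t)}{t^{1+s}}\,dt\le C\int_0^{r/4}\frac{\omega_{A,K'}(t)}{t^{1+s}}\,dt ,
\]
using $r^s\le 1$ for $r\le1$. The last integral tends to $0$ as $r\searrow0$ by \eqref{mkojihguyftdreswxrdcftvgybhjn}.
\end{enumerate}
Letting first $r\searrow0$ and then $R\to+\infty$ handles these three pieces.

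The main obstacle is the singularity at $w=e$. There $A(x+rw)-A(x)$ does not vanish: it is close to $A(x+re)-A(x)$, which is small but not zero, and the bare kernel is not absolutely integrable. The plan is to write
\[
A(x+rw)-A(x)=\big(A(x+rw)-A(x+re)\big)+\big(A(x+re)-A(x)\big).
\]
\begin{enumerate}
\item[(d)] The first term is handled exactly as in step (c), centred at $e$: one gets the bound $C\int_0^{r/4}\omega_{A,K'}(t)\,t^{-1-s}\,dt$.
\item[(e)] The second term is a constant matrix $M$ with $|M|\le\omega_{A,K'}(r)$. The principal value of $\int_{B_{1/4}(e)}\frac{w\cdot M(w-e)}{|w|^{N+s+1}|w-e|^{N+s+1}}\,dw$ is then bounded by $C|M|$, via the odd-symmetry cancellation plus the Lipschitz estimate of $w\mapsto w|w|^{-N-s-1}$ near $e$, exactly as in Lemma~\ref{lemmareg1}.
\end{enumerate}
All constants are uniform in $x\in K$ and $e\in\mathbb S^{N-1}$. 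Hence every piece tends to $0$ uniformly, which concludes the argument.
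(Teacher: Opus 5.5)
Your proposal is correct and follows the paper's proof essentially step by step. It uses the same rescaling $w=(z-x)/r$, the same freezing of $A(x)$ via Corollary~\ref{COINCIDONOCOST}, and the same splitting into $B_{1/4}$, $B_{1/4}(e)$ (where $A(x+rw)-A(x)$ is decomposed through $A(x+re)$ and the constant part is handled by odd symmetry plus the Lipschitz bound), a bounded middle region controlled by $\omega_{A,K'}(rR)$, and a tail of order $R^{-N-2s}$. The differences are cosmetic: you take $K'=K+\overline{B_R}$ instead of the paper's $K+\overline{B_1}$ with $rR\le1$, and you justify the frozen-coefficient identity explicitly via existence from \cite{DSVPART1} plus uniqueness, whereas the paper invokes Corollary~\ref{COINCIDONOCOST} directly.
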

\begin{proof}
Let $K\subset\R^N$ be compact and set~$
K':=K+\overline{B_1}$.
Let $x\in K$, $e\in\mathbb S^{N-1}$ and $r\in(0,1)$. By~\eqref{CHARKE}, using the change of variables $w:=(z-x)/r$, we get
\[
r^{N+2s}k_A(x,x+re) = -\frac{d_{N,s}^2}{2}\lim_{\varepsilon\searrow0}\int_{\R^N\setminus\left(B_{\varepsilon/r}\cup B_{\varepsilon/r}(e)\right)}\frac{w\cdot A(x+rw)(w-e)}{|w|^{N+s+1}|w-e|^{N+s+1}} \,dw.
\]

Moreover, we point out that Corollary \ref{COINCIDONOCOST} can be applied
to the matrix $A(x)$ for every fixed~$x$. In this way, we have
\[
-\frac{d_{N,s}^2}{2}\lim_{\varepsilon\searrow0}\int_{\R^N\setminus\left(B_{\varepsilon/r}\cup B_{\varepsilon/r}(e)\right)}\frac{w\cdot A(x)(w-e)}{|w|^{N+s+1}|w-e|^{N+s+1}} \,dw = \frac{c_{N,s}}{4s}\Big((N+2s)e\cdot A(x)e-\tr(A(x)\Big).
\]
Thus, the desired result will follow once we prove that
\[
\lim_{r\searrow0}\sup_{\substack{x\in K\\ e\in\mathbb S^{N-1}}}\left|\lim_{\varepsilon\searrow0}\int_{\R^N\setminus\left(B_{\varepsilon/r}\cup B_{\varepsilon/r}(e)\right)}\frac{w\cdot\big(A(x+rw)-A(x)\big)(w-e)}{|w|^{N+s+1}|w-e|^{N+s+1}} \,dw\right|=0.
\]
To this end, we split the above integral into a neighborhood of $0$, a neighborhood of $e$, and the complement of them.

For $w\in B_{1/4}$ we have $|w-e|\ge|e|-|w|\ge 3/4$. Moreover,
since~$x\in K'$ and~$x+rw\in K'$, for $x\in K$, $r\in(0,1)$ and $w\in B_{1/4}$, we have that
\[
|A(x+rw)-A(x)|\le\omega_{A,K'}(r|w|).
\]
Combining these pieces of information, we get
\[
\begin{split}
&\left|\;\int_{B_{1/4}}\frac{w\cdot\big(A(x+rw)-A(x)\big)(w-e)}{|w|^{N+s+1}|w-e|^{N+s+1}} \,dw\right|\le \left(\frac43\right)^{N+s}\int_{B_{1/4}}\frac{\omega_{A,K'}(r|w|)}{|w|^{N+s}} \,dw\\
&\qquad= \left(\frac43\right)^{N+s} \omega_{N-1}\int_0^{1/4}\frac{\omega_{A,K'}(rt)}{t^{1+s}}\,dt = \left(\frac43\right)^{N+s} \omega_{N-1} r^s\int_0^{r/4}\frac{\omega_{A,K'}(\tau)}{\tau^{1+s}} \,d\tau.
\end{split}
\]
Hence, thanks to assumption \eqref{mkojihguyftdreswxrdcftvgybhjn}, we infer that
\begin{eqnarray*}&&
\lim_{r\searrow0}\sup_{\substack{x\in K\\e\in\mathbb S^{N-1}}}
\left|\;\int_{B_{1/4}}\frac{w\cdot\big(A(x+rw)-A(x)\big)(w-e)}{|w|^{N+s+1}|w-e|^{N+s+1}} \,dw\right|
\le\lim_{r\searrow0} r^s\int_0^{r/4}\frac{\omega_{A,K'}(\tau)}{\tau^{1+s}} \,d\tau=0.
\end{eqnarray*}

We now consider $B_{1/4}(e)$. Writing $u:=w-e$, we have $|u|<1/4$ and 
\[
A(x+r(e+u))-A(x) = A(x+r(e+u))-A(x+re) + A(x+re)-A(x).
\]
The difference $A(x+r(e+u))-A(x+re)$ gives the same estimate obtained above
with $u$ in place of $w$.

To deal with the contribution coming from~$A(x+re)-A(x)$,
we observe that, by odd symmetry,
\[
\int_{B_{1/4}\setminus B_{\varepsilon/r}}\frac{e\cdot\big(A(x+re)-A(x)\big)u}{|u|^{N+s+1}} \,du=0.
\]
Hence, 
\begin{equation}\label{star1}\begin{split}&
\lim_{\varepsilon\searrow0}\int_{B_{1/4}\setminus B_{\varepsilon/r}}\frac{(e+u)\cdot\big(A(x+re)-A(x)\big)u}{|e+u|^{N+s+1}|u|^{N+s+1}} \,du\\&\qquad = \int_{B_{1/4}}\left(\frac{e+u}{|e+u|^{N+s+1}}-e\right)\cdot\frac{(A(x+re)-A(x))u}{|u|^{N+s+1}}\,du.
\end{split}\end{equation}

Now,  since $|e|=1$, for every $u\in B_{1/4}$ and $t\in[0,1]$ we have $|e+tu|\ge |e|-t|u|\ge 3/4$.
Therefore, applying the mean value theorem to the map $\xi\mapsto\xi/|\xi|^{N+s+1}$ in $[e, e+u]$, we obtain that
\[
\left|\frac{e+u}{|e+u|^{N+s+1}}-e\right|\le C|u|,
\]
where $C>0$ depends only on $N$ and $s$.  Accordingly, we have
\[
\begin{split}
&\left|\;\int_{B_{1/4}}\left(\frac{e+u}{|e+u|^{N+s+1}}-e\right)\cdot\frac{(A(x+re)-A(x))u}{|u|^{N+s+1}} \,du\right|\le C|A(x+re)-A(x)|\int_{B_{1/4}}\frac{|u|^2}{|u|^{N+s+1}} \,du\\
&\qquad=C|A(x+re)-A(x)|\int_{B_{1/4}}\frac{du}{|u|^{N+s-1}}.
\end{split}
\]
Recalling \eqref{omegaAkdefn}, we have that $|A(x+re)-A(x)|\le\omega_{A,K'}(r)$. Hence, up to renaming~$C>0$,
\[
\left|\,\int_{B_{1/4}}\left(\frac{e+u}{|e+u|^{N+s+1}}-e\right)\cdot\frac{(A(x+re)-A(x))u}{|u|^{N+s+1}} du\right|\le C\omega_{A,K'}(r).
\]
Accordingly, recalling~\eqref{star1},
we conclude that
\[
\lim_{r\searrow0}\sup_{\substack{x\in K\\ e\in\mathbb S^{N-1}}}\left|\;\int_{B_{1/4}}\left(\frac{e+u}{|e+u|^{N+s+1}}-e\right)\cdot\frac{(A(x+re)-A(x))u}{|u|^{N+s+1}}\,du\right|=0.
\]

It only remains to estimate the contribution away from $0$ and $e$.  To this end, let $R>2$. If~$rR\le1$, then, for $w\in B_R\setminus (B_{1/4}\cup B_{1/4}(e))$,  we have $x+rw\in K'$ and
\[
|A(x+rw)-A(x)|\le\omega_{A,K'}(r|w|)\le\omega_{A,K'}(rR).
\]
Moreover, $|w|\ge 1/4$ and $|w-e|\ge 1/4$.  Therefore,
\[
\begin{split}
&\left|\;\int_{B_R\setminus(B_{1/4}\cup B_{1/4}(e))}\frac{w\cdot\big(A(x+rw)-A(x)\big)(w-e)}{|w|^{N+s+1}|w-e|^{N+s+1}}\, dw\right|\\
&\qquad\le\omega_{A,K'}(rR)\int_{B_R\setminus(B_{1/4}\cup B_{1/4}(e))}\frac{dw}{|w|^{N+s}|w-e|^{N+s}}\le C_R\,\omega_{A,K'}(rR),
\end{split}
\]
where $C_R>0$ is a constant depending only on $N$, $s$ and $R$. In particular, for every $R>2$, we get
\[
\lim_{r\searrow0}\sup_{\substack{x\in K\\e\in\mathbb S^{N-1}}}\left|\;\int_{B_R\setminus(B_{1/4}\cup B_{1/4}(e))}\frac{w\cdot\big(A(x+rw)-A(x)\big)(w-e)}{|w|^{N+s+1}|w-e|^{N+s+1}} \,dw\right|=0.
\]

Finally, we estimate the contribution from $\R^N\setminus B_R$.
Since $R>2$ and $e\in\mathbb S^{N-1}$, for every~$|w|>R$ we have $|w-e|\ge |w|-|e|\ge |w|-1\ge |w|/2$.  Moreover, $$|A(x+rw)-A(x)|\le 2\|A\|_{L^\infty(\R^N, \R^{N\times N})}.$$ Therefore, we have
\[
\begin{split}
&
\left|\;\int_{\R^N\setminus B_R}\frac{w\cdot\big(A(x+rw)-A(x)\big)(w-e)}{|w|^{N+s+1}|w-e|^{N+s+1}} \,dw\right|\\&\qquad
\le C\|A\|_{L^\infty(\R^N, \R^{N\times N})}\int_{\R^N\setminus B_R}\frac{dw}{|w|^{2N+2s}} = \frac{C\|A\|_{L^\infty(\R^N, \R^{N\times N})}}{R^{N+2s}},
\end{split}
\]
where $C>0$ depends only on $N$ and $s$. Consequently, we get
\[
\lim_{R\to+\infty}\sup_{\substack{x\in K,\ e\in\mathbb S^{N-1}\\ r\in(0,1)}}\left|\,\int_{\R^N\setminus B_R}\frac{w\cdot\big(A(x+rw)-A(x)\big)(w-e)}{|w|^{N+s+1}|w-e|^{N+s+1}} dw\right|=0.
\]
Combining all the previous estimates, we get the desired result.
\end{proof}

We are now in the position to prove Theorem \ref{LOCALETRACE}.

\begin{proof}[Proof of Theorem \ref{LOCALETRACE}]
Assume that $(i)$ holds and set
\[
\delta_K:=\inf_{x\in K}\left((N+2s)\lambda_{\min}(A(x))-\tr(A(x))\right)>0.
\]
By Proposition~\ref{FREEZING}, there exists $r_K>0$ such that,
for every $x\in K$, $e\in\mathbb S^{N-1}$ and $0<r\le r_K$,
\begin{equation}\label{mkjnhbgvfcdxszea<ezsrdtfyguhijo}
\left|r^{N+2s}k_A(x,x+re) - \frac{c_{N,s}}{4s}\left((N+2s)e\cdot A(x)e-\tr(A(x))\right)\right|\le\frac{c_{N,s}\delta_K}{8s}
.\end{equation}
Moreover, we know that $e\cdot A(x)e\ge\lambda_{\min}(A(x))$. Thus,
\[
(N+2s)e\cdot A(x)e-\tr(A(x))\ge\delta_K.
\]
Therefore,
\[
r^{N+2s}k_A(x,x+re)\ge\frac{c_{N,s}}{4s}\delta_K - \frac{c_{N,s}}{8s}\delta_K = \frac{c_{N,s}\delta_K}{8s}.
\]
Hence,  setting $\lambda_K:=(c_{N,s}\delta_K)/(8s)$, we obtain
\begin{equation}\label{chjwqkr783tyif345678}
k_A(x,x+re)\ge\frac{\lambda_K}{r^{N+2s}}.
\end{equation}

Moreover, since $A$ is bounded, we infer that, for every $x\in\R^N$ and $e\in\mathbb S^{N-1}$,
\[
\left|(N+2s)e\cdot A(x)e-\tr(A(x))\right|\le (N^2+2Ns +N) \|A\|_{L^\infty(\R^N, \R^{N\times N})} =:C_A.
\]
From this and \eqref{mkjnhbgvfcdxszea<ezsrdtfyguhijo}, we get
\[
r^{N+2s}|k_A(x,x+re)|\le\frac{c_{N,s}}{4s}\left|(N+2s)e\cdot A(x)e-\tr(A(x))\right| + \frac{c_{N,s}\delta_K}{8s}\le
\frac{c_{N,s}C_A}{4s} + \frac{c_{N,s}\delta_K}{8s}.
\]
Thus, defining
\[
\Lambda_K:=\frac{c_{N,s}C_A}{4s} + \frac{c_{N,s}\delta_K}{8s},
\]
we have that, for every $x\in K$, $e\in\mathbb S^{N-1}$ and $0<r\le r_K$,
\begin{equation}\label{chjwqkr783tyif3456782}
k_A(x,x+re)\le\frac{\Lambda_K}{r^{N+2s}}
\end{equation}
Finally, given $x\in K$ and $y\in\R^N$ with~$0<|x-y|\le r_K$, we take
\[
r:=|x-y|\quad\mbox{ and }\quad e:=\frac{y-x}{|y-x|},
\] and we obtain~$(ii)$ from~\eqref{chjwqkr783tyif345678}
and~\eqref{chjwqkr783tyif3456782}.

We now assume that $(ii)$ holds. Let $x\in K$ and $e\in\mathbb S^{N-1}$. For every $0<r\le r_K$, we have
\[
r^{N+2s}k_A(x,x+re)\ge\lambda_K.
\]
Letting $r\searrow0$ and applying Proposition~\ref{FREEZING}, we obtain that
\[
\frac{c_{N,s}}{4s}\left((N+2s)e\cdot A(x)e-\tr(A(x)\right)\ge\lambda_K.
\]
Since the latter inequality holds for every $e\in\mathbb S^{N-1}$, we get
\[
\frac{c_{N,s}}{4s}\left((N+2s)\lambda_{\min}(A(x))-\tr(A(x)\right)\ge\lambda_K.
\]
Hence, 
\[
\inf_{x\in K}\left((N+2s)\lambda_{\min}(A(x))-\tr(A(x))\right)\ge\frac{4s\lambda_K}{c_{N,s}}>0.
\]
This proves $(i)$ and cocludes the proof.
\end{proof}

We stress that Theorem~\ref{LOCALETRACE} is local with respect to the point $x\in K$. This is consistent with the fact that \eqref{mkojihguyftdreswxrdcftvgybhjn} holds for every compact set $K\subset\R^N$.
If the modulus of continuity satisfies~\eqref{mkojihguyftdreswxrdcftvgybhjn} uniformly on the whole space, then the argument in Proposition \ref{FREEZING} is uniform with respect to $x\in\R^N$. As a consequence, both the distance from the diagonal and the constants in the estimates from above and below can be chosen independently of~$x\in K$. 

\section{Proof of Theorem \ref{thmaltroopenproblem}}\label{poi98u76t5r4e3jhgfrde}
To start with, we identify the mechanism responsible for the failure of the truncation inequality. More precisely, we prove that if a symmetric kernel is strictly negative on the product of two disjoint open sets of $\R^N$, then we can construct a smooth and compactly supported function whose energy increases under truncation. The precise statement reads as follows:

\begin{lemma}\label{lemmaotheropenproblem}
Let $k_A \in L^1_{\mathrm{loc}}(\R^{2N}\setminus\Delta)$ be a symmetric kernel.

For every $u,v\in C_c^\infty(\R^N)$, let 
\[
B_A(u,v):= \iint_{\R^{2N}}(u(x)-u(y))(v(x)-v(y))k_A(x,y) \,dx \,dy.
\]
Assume that
\begin{equation}\label{starstar}
B_A(w,w)>0\quad\mbox{ for every }w\in C_c^\infty(\R^N)\setminus\{0\}.
\end{equation}
In addition, suppose that there exist two nonempty and disjoint open sets $U,V\subset\R^N$ such that
\begin{equation}\label{starstar2}
k_A(x,y)<0 \quad\mbox{ for a.e. }(x,y)\in U\times V.
\end{equation}
Let $T(t)$ be as in \eqref{TAGLIODEFN}.

Then, there exists $u\in C_c^\infty(\R^N)$ such that $T(u)\in C_c^\infty(\R^N)$, and 
\[
B_A(T(u),T(u))>B_A(u,u).
\]
\end{lemma}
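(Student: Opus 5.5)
The plan is to build $u$ as a combination of a bump $\varphi$ supported in $U$ and a bump $\psi$ supported in $V$, with $\varphi,\psi\ge0$, $\varphi,\psi\in C_c^\infty$, and $\varphi\not\equiv0$, $\psi\not\equiv0$. The two supports are disjoint, and the constant $1$ plays no role in our choice. We then take
\[
u:=\varphi - t\psi,\qquad t>0,
\]
where $\varphi$ is chosen with $0\le\varphi\le1$. Then $T(u)=\varphi$, which is smooth and compactly supported. Indeed, on $\supp\varphi$ we have $u=\varphi\in[0,1]$; on $\supp\psi$ we have $u\le0$, so $T(u)=0$; and elsewhere $u=0$.

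Next we expand the bilinear form, which is legitimate since it is bilinear and finite on $C_c^\infty$ functions under the standing assumptions:
\[
B_A(u,u)=B_A(\varphi,\varphi)-2t\,B_A(\varphi,\psi)+t^2B_A(\psi,\psi).
\]
Hence
\[
B_A(T(u),T(u))-B_A(u,u)=2t\,B_A(\varphi,\psi)-t^2B_A(\psi,\psi).
\]

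The supports of $\varphi$ and $\psi$ are disjoint, and $k_A$ is symmetric. Exactly as in the proof of Theorem~\ref{thmreg2}, this gives
\[
B_A(\varphi,\psi)=-2\iint_{\R^{2N}}\varphi(x)\psi(y)k_A(x,y)\,dx\,dy.
\]
The integral is absolutely convergent because $\supp\varphi\times\supp\psi$ is a compact subset of $\R^{2N}\setminus\Delta$ and $k_A\in L^1_{\rm loc}$. By \eqref{starstar2}, together with $\varphi,\psi\ge0$ not identically zero and having supports in $U$ and $V$, the integrand $\varphi(x)\psi(y)k_A(x,y)$ is $\le 0$ and strictly negative on a set of positive measure. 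Therefore $B_A(\varphi,\psi)>0$.

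Moreover, $B_A(\psi,\psi)>0$ and is finite by \eqref{starstar}; only finiteness is really needed here. Choosing $0<t<2B_A(\varphi,\psi)/B_A(\psi,\psi)$ makes the difference $2tB_A(\varphi,\psi)-t^2B_A(\psi,\psi)$ strictly positive, which concludes the argument.

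The only delicate step is justifying the bilinear expansion and the disjoint-support formula, that is, the absolute integrability of each cross term. The mixed term lives on a compact set away from the diagonal, so local integrability suffices there. For the diagonal terms we simply use that $B_A$ is well defined on $C_c^\infty$ functions, which is implicit in the statement.
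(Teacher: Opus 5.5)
Your proposal is correct and is essentially the paper's proof. The paper also takes $u=\varphi-\tau\psi$ with $\varphi\in C_c^\infty(U,[0,1])$ and $\psi\in C_c^\infty(V,[0,+\infty))$, notes that $T(u)=\varphi$, and uses symmetry together with disjoint supports to get $B_A(\varphi,\psi)=-2\iint_{U\times V}\varphi(x)\psi(y)k_A(x,y)\,dx\,dy>0$. The only differences are that the paper fixes the optimal value $\tau=B_A(\varphi,\psi)/B_A(\psi,\psi)$, which gives the explicit gain $B_A(\varphi,\psi)^2/B_A(\psi,\psi)$, whereas you allow any $t\in(0,2\tau)$, and that you state explicitly the requirement $\varphi,\psi\not\equiv0$, which the paper leaves implicit.
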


\begin{proof}
Let $\varphi\in C_c^\infty(U,[0,1])$ and $\psi\in C_c^\infty(V,[0,+\infty))$. Since $\supp(\varphi)$ and $\supp(\psi)$ are disjoint, we have $(\varphi(x)-\varphi(y))(\psi(x)-\psi(y))=0$ unless either~$x\in\supp(\varphi)$ and $y\in\supp(\psi)$, or~$x\in\supp(\psi)$ and $y\in\supp(\varphi)$. Thus, since~$k_A$ is symmetric,
\[
B_A(\varphi,\psi)=-2\iint_{U\times V}\varphi(x)\psi(y)k_A(x,y) \,dy\, dx.
\]
By~\eqref{starstar2} it follows that $B_A(\varphi,\psi)>0$. 

Moreover, thanks to~\eqref{starstar}, we can define
\begin{equation}\label{cuorejr43otgbwu09876543}
\tau:=\frac{B_A(\varphi,\psi)}{B_A(\psi,\psi)}\in(0,+\infty).
\end{equation}
We also set~$u:=\varphi-\tau\psi$.
Since $\varphi$ and $\psi$ are nonnegative and have disjoint supports, we have
\[
u(x)=\begin{cases}
\varphi(x), & \mbox{ if } x\in\supp\varphi,\\[3pt]
-\tau\psi(x), & \mbox{ if } x\in\supp\psi,\\[3pt]
0, & \mbox{ if } x\notin\supp\varphi\cup\supp\psi.
\end{cases}
\]
Hence, recalling~\eqref{TAGLIODEFN}, we infer that
\[
T(u)=\varphi\quad\mbox{ in }\R^N,
\] and accordingly
\begin{equation}\label{cuorejr43otgbwu098765432}
B_A(T(u),T(u)) = B_A(\varphi,\varphi).
\end{equation}

Furthermore, by using the symmetry and the bilinearity of $B_A$, we get
\[
B_A(u,u) = B_A(\varphi,\varphi) -2\tau B_A(\varphi,\psi)+\tau^2  B_A(\psi,\psi).
\]
Thus, by~\eqref{cuorejr43otgbwu09876543} we obtain that
\[
B_A(u,u) = B_A(\varphi,\varphi)-\frac{B_A(\varphi, \psi)^2}{B_A(\psi, \psi)}.
\]
Combining this with~\eqref{cuorejr43otgbwu098765432},
we conclude that
\[
B_A(T(u),T(u))-B_A(u,u) = \frac{B_A(\varphi, \psi)^2}{B_A(\psi, \psi)}>0.
\]
This concludes the proof.
\end{proof}

We now combine Lemma \ref{lemmaotheropenproblem} with Proposition \ref{FREEZING} to obtain Theorem \ref{thmaltroopenproblem}.

\begin{proof}[Proof of Theorem \ref{thmaltroopenproblem}]
Since $A(x_0)$ is symmetric, there exists $e_0\in\mathbb S^{N-1}$ such that
\[
A(x_0)e_0 = \lambda_{\min}(A(x_0))e_0.
\]
Hence, thanks to the assumption \eqref{x0conditionbhgvfctdxrsdtfygh}, we obtain that
\[
(N+2s)e_0\cdot A(x_0)e_0-\tr(A(x_0))<0.
\]
We choose $\eta>0$ such that
\[
(N+2s)e_0\cdot A(x_0)e_0-\tr(A(x_0))<-3\eta.
\]
Since $A\in C(\R^N, \R^{N\times N})$, there exist $\rho>0$ and $\delta>0$ such that, if $x\in\overline{B_\rho(x_0)}$ and $e\in\mathbb S^{N-1}$ with $ |e-e_0|<\delta$, then
\begin{equation}\label{lpkjohiguytdreswaesrdtfgyh}
(N+2s)e\cdot A(x)e-\tr(A(x))\le -2\eta.
\end{equation}

We now use Proposition \ref{FREEZING} with $K:=\overline{B_\rho(x_0)}$.  Accordingly, there exists $r_0>0$ such that, for every~$x\in K$, $e\in\mathbb S^{N-1}$ and~$0<r\le r_0$, we have
\[
\left|r^{N+2s}k_A(x,x+re)-\frac{c_{N,s}}{4s}\left((N+2s)e\cdot A(x)e-\tr(A(x))\right)\right| < \frac{c_{N,s}\eta}{4s}.
\]
Now, if in addition $|e-e_0|<\delta$, then inequality \eqref{lpkjohiguytdreswaesrdtfgyh} gives
\[
r^{N+2s}k_A(x,x+re)\le -\frac{c_{N,s}}{2s}\eta + \frac{c_{N,s}\eta}{4s} = -\frac{c_{N,s}\eta}{4s}<0.
\]
Therefore, for every $x\in\overline{B_\rho(x_0)}$, $e\in\mathbb S^{N-1}$ with $ |e-e_0|<\delta$, and $0<r\le r_0$, we have that
\begin{equation}\label{mkjnhbgvfcdxszxrdcfvgbhnjmk}
k_A(x,x+re)<0.
\end{equation}

We now want to apply Lemma \ref{lemmaotheropenproblem}. To this end,
we construct two disjoint open sets of~$\R^N$ in which the kernel is
negative. For this, let
\begin{equation*} f(z):=\frac{1+z\cdot e_0}{|e_0+z|}, \quad{\mbox{ for all }}z\in B_{\frac12} ,\end{equation*}
and
$$ C_0:=\sup_{z\in B_{\frac12}}|D^2 f(z)|. $$
Let also
\[
\alpha\in\left(0,\min\left\{\frac14,\frac{\delta}{{2}\sqrt{C_0}}\right\}\right).
\]
and~$r>0$ be such that
\[
r<\rho\quad\mbox{ and }\quad (1+2\alpha)r<r_0.
\]
Moreover, we define
\[
U:=B_{\alpha r}(x_0)\quad\mbox{ and }\quad V:=B_{\alpha r}(x_0+re_0).
\]
Since $\alpha<1/2$ and
\[
|x_0-(x_0+re_0)|=r>2\alpha r,
\]
it follows that the sets $U$ and $V$ are disjoint. 
Moreover, $U\subset B_\rho(x_0)$.

Let $x\in U$ and $y\in V$. Then,
$$ |x-y|\le |x-x_0|+|y-x_0-re_0|+r\le (1+2\alpha)r<r_0.$$
Moreover, we can write
\[
y-x=r e_0+\xi \quad\mbox{ with }|\xi|<2\alpha r.
\]
Therefore, setting $ e:=(y-x)/|y-x|$, we obtain that $e\in\mathbb S^{N-1}$ and 
\begin{eqnarray*}
&&\left|e-e_0\right|^2 = \left|\frac{r e_0+\xi }{|r e_0+\xi |}- e_0 \right|^2
=2\left(1-\frac{r +\xi\cdot e_0}{|re_0+\xi|}\right)= 2\left(
1-\frac{ 1 +\frac{\xi}{r}\cdot e_0 }{\left|e_0+\frac{\xi}r\right| }\right)\\
&&\qquad = 2\left(
1-f\left(\frac{ \xi}{r}\right)\right)\le \frac{C_0|\xi|^2}{r^2}\le 4\alpha^2 C_0<\delta^2,
\end{eqnarray*}
that is, $|e-e_0|<\delta$.

Gathering these observations, 
we find that we are in the position of employing~\eqref{mkjnhbgvfcdxszxrdcfvgbhnjmk}, obtaining that
\begin{eqnarray*}
k_A(x,y)=k_A\left(x, x+ |y-x| e\right)<0.
\end{eqnarray*}

Furthermore,
\eqref{MATHCALBDEFN} and identity~\eqref{mnbhgvcfdxszxdfcgvhb} give that,
for $u,v\in C_c^\infty(\R^N)$,
\[
\mathcal B_A(u,v) = \iint_{\R^{2N}}(u(x)-u(y))(v(x)-v(y))k_A(x,y)\, dx\, dy
\]
and assumption~\eqref{uniformlyellipticdefn} entails that,
for every $w\in C_c^\infty(\R^N)\setminus\{0\}$,
\[
\mathcal B_A(w,w)\ge \lambda_A \|D^sw\|_{L^2(\R^N)}^2 >0.
\]
Accordingly, the desired result follows 
from Lemma \ref{lemmaotheropenproblem}. 
\end{proof}

\section{Proof of Theorem \ref{CARASS}}\label{èòplokijhugyftrdeswaqASDFG}

To prove Theorem~\ref{CARASS}, we first provide the following representation of the kernel $k_A(x, y)$.

\begin{proposition}\label{ASYPR}
Let $s\in(0,1)$ and let $A\in C(\R^N,\R^{N\times N})\cap L^\infty(\R^N,\R^{N\times N})$ be symmetric and satisfy~\eqref{mkojihguyftdreswxrdcftvgybhjn}
for every compact set $K\subset\R^N$.
Let $k_A$ be as in \eqref{CHARKE}.

Then there exists a constant $C>0$, depending only on $N$, $s$, and $\|A\|_{L^\infty(\R^N,\R^{N\times N})}$,
such that, for every $x,y\in\R^N$ with $x\ne y$,
\[
\left| k_A(x,y)+\frac{d_{N,s}^2}{2|x-y|^{N+s}}\PV\int_{\R^N}\frac{w\cdot\left(A(w+x)-A(w+y)\right)(x-y)}{|w|^{N+s+1}|x-y|}\, dw\right|\le\frac{C}{|x-y|^{N+2s}}.
\]
\end{proposition}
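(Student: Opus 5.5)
The plan is to split the principal value \eqref{CHARKE} into a neighbourhood of $x$, a neighbourhood of $y$, and the remaining region. In the first two pieces I freeze the smooth factor coming from the far singularity; in the third I estimate crudely. Everything will be measured in the single scale $d:=|x-y|$, with $h:=x-y$.

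Concretely, I will write the integral defining $I_A(x,y)$ in \eqref{INT2PVdefn} as a sum over $B_{d/2}(x)$, $B_{d/2}(y)$ and $\R^N\setminus(B_{d/2}(x)\cup B_{d/2}(y))$.

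\emph{Outer region.} There $|z-x|,|z-y|\ge d/2$. The integrand is bounded by $\|A\|_{L^\infty}|z-x|^{-N-s}|z-y|^{-N-s}$. Splitting further according to which of $|z-x|,|z-y|$ is smaller, this integrates to at most $C d^{-N-s}\int_{\{|w|>d/2\}}|w|^{-N-s}\,dw\le C d^{-N-2s}$.

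\emph{Region near $x$.} Put $w:=z-x$, so that $z-y=w+h$. I will decompose
\[
\frac{w+h}{|w+h|^{N+s+1}}=\frac{h}{|h|^{N+s+1}}+E(w),
\]
where the mean value theorem gives $|E(w)|\le C|w|\,d^{-N-s-1}$ for $|w|<d/2$. The $E$-part is absolutely integrable: it is bounded by $C d^{-N-s-1}\int_{B_{d/2}}|w|^{1-N-s}\,dw=Cd^{-N-2s}$. What survives is the principal value
\[
|h|^{-N-s-1}\,\PV\int_{B_{d/2}}\frac{w\cdot A(x+w)h}{|w|^{N+s+1}}\,dw .
\]
This exists by the Dini condition \eqref{mkojihguyftdreswxrdcftvgybhjn}, exactly as in Lemma~\ref{lemmareg1}: subtract $A(x)$ and use the odd symmetry of $w|w|^{-N-s-1}$.

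\emph{Region near $y$.} Put $w:=z-y$, so that $z-x=w-h$. By the symmetry of $A$ the integrand can be written as $(z-y)\cdot A(z)(z-x)$. The same argument then yields
\[
-|h|^{-N-s-1}\,\PV\int_{B_{d/2}}\frac{w\cdot A(y+w)h}{|w|^{N+s+1}}\,dw ,
\]
again up to an error of size $Cd^{-N-2s}$.

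\emph{Reassembling.} Adding the two frozen terms gives the truncated principal value over $B_{d/2}$ of $w\cdot(A(w+x)-A(w+y))h/|w|^{N+s+1}$, multiplied by $|h|^{-N-s-1}$. To replace $B_{d/2}$ by $\R^N$, I note that on $\{|w|>d/2\}$ the difference integrand is absolutely integrable, since it is bounded by $2\|A\|_{L^\infty}\,d\,|w|^{-N-s}$. Its integral is therefore at most $Cd^{1-s}$, and after multiplying by $d^{-N-s-1}$ this is again $\le Cd^{-N-2s}$. Finally, multiplying by $-d_{N,s}^2/2$ and writing $|h|^{-N-s-1}h=|x-y|^{-N-s}\,(x-y)/|x-y|$ gives exactly the expression in the statement, with error $\le Cd^{-N-2s}$.

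The constant $C$ depends only on $N$, $s$ and $\|A\|_{L^\infty}$. This holds because the Dini modulus is used only to guarantee that the principal values exist; all quantitative bounds use only $\|A\|_{L^\infty}$ and scaling.

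The main obstacle is bookkeeping with the principal values. The $\varepsilon$-truncation in \eqref{CHARKE} removes $B_\varepsilon(x)\cup B_\varepsilon(y)$ simultaneously, and I must check that this limit splits into the two separate principal values at $0$ in the variables $z-x$ and $z-y$. The subtracted pieces ($E$-terms and the outer region) are absolutely integrable, so they pass to the limit freely. The remaining singular pieces converge individually by the odd-symmetry argument of Lemma~\ref{lemmareg1}, so the limit of the sum equals the sum of the limits.
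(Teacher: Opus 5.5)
Your proposal is correct and follows the paper's proof essentially step by step: the same split of the principal value into $B_{|x-y|/2}(x)$, $B_{|x-y|/2}(y)$ and the outer region, the same freezing of the far factor with a mean-value error of order $|w|\,|x-y|^{-N-s-1}$, and the same crude bounds on the outer region and on the tail $\{|w|>|x-y|/2\}$. The paper packages the two frozen terms as $v_R(x)$ and $v_R(y)$, which combine to the truncated principal value of $w\cdot(A(w+x)-A(w+y))(x-y)/(|w|^{N+s+1}|x-y|)$ exactly as in your reassembly step.
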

\begin{proof}
Let $x\ne y\in\R^N$. We use the short notation $R:=|x-y|$ and $e:=\frac{y-x}{R}$. We also consider three contributions to the integral defining $k_A(x, y)$, namely
\[
\begin{split}
&I_1:=\PV\int_{B_{R/2}(x)}\frac{(z-x)\cdot A(z)(z-y)}{|z-x|^{N+s+1}|z-y|^{N+s+1}}\, dz,\\
&I_2:=\PV\int_{B_{R/2}(y)}\frac{(z-x)\cdot A(z)(z-y)}{|z-x|^{N+s+1}|z-y|^{N+s+1}} \,dz,\\
\mbox{ and }\quad &I_3:=\int_{\R^N\setminus(B_{R/2}(x)\cup B_{R/2}(y))}\frac{(z-x)\cdot A(z)(z-y)}{|z-x|^{N+s+1}|z-y|^{N+s+1}} \,dz.
\end{split}
\]
We also set
\[
v_R(x):=\PV\int_{B_{R/2}}\frac{w\cdot A(w+x)}{|w|^{N+s+1}} \,dw.
\]
We point out that this principal value is well defined. Indeed, by odd symmetry,
\[
\left|\;\int_{B_{R/2}\setminus B_\varepsilon}\frac{w\cdot A(w+x)}{|w|^{N+s+1}}\,dw\right| = \left|\,\int_{B_{R/2}\setminus B_\varepsilon}\frac{w\cdot\big(A(w+x)-A(x)\big)}{|w|^{N+s+1}} \,dw\right|\le C\int_\varepsilon^{R/2} \frac{\omega_{A,K}(r)}{r^{1+s}}\, dr
\]
where $K$ is a compact set containing $x+\overline{B_{R/2}}$.  By \eqref{mkojihguyftdreswxrdcftvgybhjn}, the right hand side is finite as $\varepsilon\searrow0$. Therefore the limit defining $v_R(x)$ exists.

We claim that
\begin{equation}\label{èplokijhugyftdreswaqsdf}
\left|I_1+\frac{v_R(x)\cdot e}{R^{N+s}}\right|\le\frac{C}{R^{N+2s}}.
\end{equation}
Indeed, for~$|w|<\frac{R}{2}$,
\[
\left|\frac{Re-w}{|Re-w|^{N+s+1}}-\frac{e}{R^{N+s}}\right|=\frac{1}{R^{N+s}}\left|\frac{e-w/R}{|e-w/R|^{N+s+1}}-e\right|\le\frac{C|w|}{R^{N+s+1}}.
\]
On this account, setting $w:=z-x$,
up to renaming the constant $C>0$ from line to line, 
\[
\begin{split}
&\left|I_1+\frac{v_R(x)\cdot e}{R^{N+s}}\right|\le\int_{B_{R/2}}\frac{|w|\,\|A\|_{L^\infty(\R^N, \R^{N\times N})}}{|w|^{N+s+1}}\left|\frac{Re-w}{|Re-w|^{N+s+1}}-\frac{e}{R^{N+s}}\right| \,dw\\
&\qquad\le\frac{C\|A\|_{L^\infty(\R^N, \R^{N\times N})}}{R^{N+s+1}}\int_{B_{R/2}}\frac{dw}{|w|^{N+s-1}}\le\frac{C}{R^{N+2s}},
\end{split}
\]
leading to the desired result in \eqref{èplokijhugyftdreswaqsdf}.

Since $A$ is symmetric, the same argument after swapping $x$ and $y$ yields
\begin{equation}\label{èplokijhugyftdreswaqsdf2}
\left|
I_2-\frac{v_R(y)\cdot e}{R^{N+s}}
\right|
\le
\frac{C}{R^{N+2s}}.
\end{equation}

Furthermore, up to renaming the constant $C>0$ from line to line, we have
\[
\begin{split}
&I_3\le\|A\|_{L^\infty(\R^N, \R^{N\times N})}\int_{\R^N\setminus(B_{R/2}(x)\cup B_{R/2}(y))} \frac{dz}{|z-x|^{N+s}|z-y|^{N+s}}\\
&\qquad\le\frac{C}{R^{N+s}}\int_{\R^N\setminus B_{R/2}(x)} \frac{dz}{|z-x|^{N+s}} \le\frac{C}{R^{N+2s}}.
\end{split}
\]
This estimate, combined with \eqref{èplokijhugyftdreswaqsdf}, \eqref{èplokijhugyftdreswaqsdf2} and \eqref{CHARKE}, yields
\[
\left|k_A(x,y) + \frac{d_{N,s}^2}{2R^{N+s}}(v_R(y)-v_R(x))\cdot e\right|\le\frac{C}{R^{N+2s}}.
\]
Since
\[
(v_R(y)-v_R(x))\cdot e =\PV\int_{B_{R/2}}\frac{w\cdot\big(A(w+x)-A(w+y)\big)(x-y)}{|w|^{N+s+1}|x-y|}\, dw,
\]
it only remains to consider the integral over $\R^N\setminus B_{R/2}$. For this purpose, we notice that
\begin{eqnarray*}&&
\frac{1}{R^{N+s}}\left|\;\int_{\R^N\setminus B_{R/2}}\frac{w\cdot\big(A(w+x)-A(w+y)\big)(x-y)}{|w|^{N+s+1}|x-y|} \,dw\right|
\\&&\qquad\quad \le \frac{2\|A\|_{L^\infty(\R^N, \R^{N\times N})}}{R^{N+s}}\int_{\R^N\setminus B_{R/2}}\frac{dw}{|w|^{N+s}}\le\frac{C}{R^{N+2s}}.
\end{eqnarray*}
Therefore, combining the previous two estimates, we obtain the desired result.
\end{proof}

With this, we can complete the proof of Theorem~\ref{CARASS}.

\begin{proof}[Proof of Theorem~\ref{CARASS}]
We first prove that $(i)$ implies $(ii)$. For $x\in\R^N$, we define
\begin{equation}\label{lkjohuygftdreswaesrdftgyh}
V(x):=\PV\int_{\R^N}\frac{w\cdot A(w+x)}{|w|^{N+s+1}}\, dw.
\end{equation}
We point out that
the principal value is well defined, thanks to odd symmetry and~\eqref{mkojihguyftdreswxrdcftvgybhjn}.

We also notice that, in terms of the function $V(x)$, Proposition \ref{ASYPR} gives
\begin{equation}\label{ASYPRV}
\left|k_A(x,y)+\frac{d_{N,s}^2}{2|x-y|^{N+s}}(V(x)-V(y))\cdot\frac{x-y}{|x-y|}\right|\le\frac{C}{|x-y|^{N+2s}}.
\end{equation}

Let $y\in\R^N$ and $e\in\mathbb S^{N-1}$. Let $R>0$ and set $y_R:=y+Re$. Now, since
\[
\lim_{R\to +\infty}\frac{|y_R|}{R} = 1,
\]
assumption~\eqref{ mnbvcxdszaxdcfvbnm,.poijuhgf} implies that
\[
\lim_{R\to+\infty}R^{N+s}k_A(y_R, y)=0.
\]
Therefore, multiplying \eqref{ASYPRV} by $R^{N+s}$ and sending $R\to+\infty$, we obtain that
\begin{equation}\label{,lòkmjnhugyftdreswaqsedrftvgybhnjm}
\lim_{R\to+\infty} (V(y_R)-V(y))\cdot e=0.
\end{equation}

For any~$t\in\R$, the limit in~\eqref{,lòkmjnhugyftdreswaqsedrftvgybhnjm} 
gives that
\[
\lim_{R\to+\infty} V(y+(t+R)e)\cdot e = V(y)\cdot e.
\]
Moreover,
applying~\eqref{,lòkmjnhugyftdreswaqsedrftvgybhnjm} with $y+te$ in place of $y$, we get
\[
\lim_{R\to+\infty} V(y+(t+R)e)\cdot e = V(y+te)\cdot e.
\]
Combining the last two displays, we find that
\[
(V(y+te)-V(y))\cdot e=0 \quad\mbox{ for every~$y\in\R^N$, $e\in\mathbb S^{N-1}$, and~$ t\in\R$.}
\]

Accordingly, we have
\[
\partial_e(V\cdot e)=0
\]
in the sense of distributions, for every $e\in\mathbb S^{N-1}$. 

Taking first $e=e_i$ and then $e=(e_i+e_j)/\sqrt2$, we infer that
\[
\partial_iV_i=0\quad\mbox{ and }\quad \partial_iV_j+\partial_jV_i=0\quad\mbox{for all }i,j=1,\ldots,N.
\]
Differentiating these identities, we obtain
\[
2\partial_i\partial_jV_k = \partial_i(\partial_jV_k+\partial_kV_j) + \partial_j(\partial_iV_k+\partial_kV_i) - \partial_k(\partial_iV_j+\partial_jV_i) = 0.
\]
Therefore, $V$ is affine.

We now show that $A$ is divergence free. To this end, let $A_j$ denote the $j$th column of $A$. Taking the Fourier transform of \eqref{lkjohuygftdreswaesrdftgyh} gives that, for every~$\xi\in\R^N\setminus\{0\}$,
\[
\widehat{V_j}(\xi) = C_{N,s}|\xi|^{s-1}\widehat{\mbox{div}(A_j)}(\xi),
\]
where $C_{N,s}\ne0$ depends only on $N$ and $s$. Since $V$ is affine, its Fourier transform is supported at the origin. Since $|\xi|^{s-1}$ is smooth and strictly positive on $\R^N\setminus\{0\}$, we infer that $\mbox{supp}(\widehat{\mbox{div}(A_j)})\subseteq\{0\}$. Thus, $\mbox{div}A_j$ is a polynomial.

We claim that this polynomial must vanish identically. Indeed, since $A_j$ is bounded, for every~$\varphi\in C_c^\infty(\R^N)$ and~$R>1$, we have
\begin{equation}\label{upperbhvbskegy458t9439}
\begin{split}
\left|\left\langle \mbox{div}(A_j),\varphi(\cdot/R)\right\rangle\right|=\left|\;\int_{\R^N}A_j(x)\cdot\nabla\big(\varphi(x/R)\big) \,dx\right|\le C\|A\|_{L^\infty(\R^N, \R^{N\times N})}R^{N-1}.
\end{split}
\end{equation}
Suppose by contradiction that $\mbox{div}(A_j)$ is not identically zero, and let $m\ge0$ be its degree. Writing
\[
\mbox{div}(A_j)=\mbox{div}(A_j)_{0}+\cdots+\mbox{div}(A_j)_m,
\]
where $\mbox{div}(A_j)_\ell$ is homogeneous of degree $\ell$ and $\mbox{div}(A_j)_m\not\equiv0$, we choose $\varphi\in C_c^\infty(\R^N)$ such that
\[
\int_{\R^N}\mbox{div}(A_j)_m(y)\varphi(y)\,dy\ne0.
\]
Then, by using the change of variables $y:=x/R$, we get
\[
\left\langle \mbox{div}(A_j),\varphi(\cdot/R)\right\rangle = \sum_{\ell=0}^m R^{N+\ell} \int_{\R^N}\mbox{div}(A_j)_\ell(y)\varphi(y) \,dy.
\]
Since the coefficient of $R^{N+m}$ is nonzero, there exist $c>0$ and $R_0>1$ such that
\[
\left|\left\langle \mbox{div}(A_j),\varphi(\cdot/R)\right\rangle\right|\ge cR^{N+m}\quad\mbox{for every }R\ge R_0.
\]
This, together with~\eqref{upperbhvbskegy458t9439}, gives the desired contradiction if~$R$ is large. Therefore, $\mbox{div}(A_j)=0$ in the sense of distributions, as desired.

We now prove that $(ii)$ implies $(i)$. Let $A_j(x)$ denote the $j$th column of the matrix $A(x)$. We claim that, for every $x\in\R^N$,
\begin{equation}\label{kpoijuhgvgt7frdswe5drfuipk}
\PV\int_{\R^N}\frac{w\cdot A_j(w+x)}{|w|^{N+s+1}}\, dw=0
.\end{equation}
Indeed, fixed $x\in\R^N$, we define~$
F(w):=A_j(w+x)$.
Since $\mbox{div}(A_j)=0$ in the sense of distributions, we also have \begin{equation}\label{c87ir54u6itgkjvewj43r32t0987654}
{\mbox{$\mbox{div}(F)=0$ in the sense of distributions.}}\end{equation} 

Moreover, for every $r>0$,
\begin{equation}\label{fluxzero00}
\int_{\partial B_r}F(w)\cdot\frac{w}{|w|}\,d\mathcal H^{N-1}(w)=0.
\end{equation}
This follows from an approximation argument. Namely,
let $F_\delta:=F\ast\rho_\delta$, where $\rho_\delta$ is a mollifier. In lieu of~\eqref{c87ir54u6itgkjvewj43r32t0987654}, we get
\[
\mbox{div}(F_\delta) = \left(\mbox{div}(F)\right)\ast\rho_\delta = 0.
\]
Moreover, since $F_\delta$ is smooth, by the divergence theorem, we get
\begin{equation}\label{jnhbgvfctdxrctfvygbhunj}
\int_{\partial B_r} F_\delta(w)\cdot\frac{w}{|w|}\, d\mathcal H^{N-1}(w) = \int_{B_r}\mbox{div}(F_\delta)(w)\, dw = 0.
\end{equation}
Moreover,
\[
|F_\delta(w)-F(w)|\le\int_{\R^N}\rho_\delta(z) |F(w-z)-F(w)| dz.
\]
Since $F$ is continuous, we get
\[
\lim_{\delta\searrow0}\sup_{w\in\partial B_r}|F_\delta(w)-F(w)|=0.
\]
Consequently, we have
\[
0\le\lim_{\delta\searrow0}\left|\;\int_{\partial B_r}\left(F_\delta(w)-F(w)\right)\cdot\frac{w}{|w|} \,d\mathcal H^{N-1}(w)\right|\le\mathcal H^{N-1}(\partial B_r)\lim_{\delta\searrow0}\sup_{w\in\partial B_r}|F_\delta(w)-F(w)|=0.
\] This and \eqref{jnhbgvfctdxrctfvygbhunj} entail~\eqref{fluxzero00}.

Let $0<\varepsilon<R$. By passing to polar coordinates and using \eqref{fluxzero00}, we obtain that
\[
\int_{B_R\setminus B_\varepsilon}\frac{w\cdot F(w)}{|w|^{N+s+1}} \,dw = \int_\varepsilon^R \frac{1}{r^{N+s}}\left(\;
\int_{\partial B_r} F(w)\cdot\frac{w}{|w|} \,d\mathcal H^{N-1}(w)\right)dr=0.
\]
Namely,\begin{equation}\label{anellozero00}
\int_{B_R\setminus B_\varepsilon}\frac{w\cdot A_j(w+x)}{|w|^{N+s+1}}\, dw=0.
\end{equation}

Furthermore, since $A(x)$ is bounded, we have
\[
\int_{\R^N\setminus B_R}\frac{|w\cdot A_j(w+x)|}{|w|^{N+s+1}} \,dw\le\|A\|_{L^\infty(\R^N, \R^{N\times N})}\int_{\R^N\setminus B_R}\frac{dw}{|w|^{N+s}} = \frac{C \|A\|_{L^\infty(\R^N, \R^{N\times N})}}{R^{s}},
\]
which tends to zero as $R\to+\infty$. Hence, using \eqref{anellozero00} and letting $R\to+\infty$, we obtain, for every $\varepsilon>0$,
\[
\int_{\R^N\setminus B_\varepsilon}\frac{w\cdot A_j(w+x)}{|w|^{N+s+1}}\, dw = \lim_{R\to+\infty}\int_{B_R\setminus B_\varepsilon}\frac{w\cdot A_j(w+x)}{|w|^{N+s+1}}\, dw = 0.
\]
Letting $\varepsilon\searrow0$ gives~\eqref{kpoijuhgvgt7frdswe5drfuipk}.

Thus, since \eqref{kpoijuhgvgt7frdswe5drfuipk} holds for every $j=1,\ldots,N$, we obtain that
\[
V(x)=0\quad\mbox{for every }x\in\R^N.
\]
Consequently, by \eqref{ASYPRV}, we infer that, for every $x\ne y$,
\[
|k_A(x,y)|\le\frac{C}{|x-y|^{N+2s}}
.\]

Finally, let $y\in\R^N$. If $|x|\ge2|y|$,  then $|x-y|\ge |x|-|y|\ge |x|/2$. Accordingly, we have
\[
|x|^{N+s}|k_A(x,y)|\le C\frac{|x|^{N+s}}{|x-y|^{N+2s}}\le\frac{C}{|x|^s}.
\]
Letting $|x|\to+\infty$, we obtain that
\[
\lim_{|x|\to+\infty}|x|^{N+s}k_A(x,y)=0,
\]
which proves $(i)$ and completes the equivalence.

In particular, if $N=1$, condition $(ii)$ reduces to $\partial_1 A_{11}=0$ in $\mathcal D'(\R)$. Since $A$ is continuous, it follows that $A$ is constant. This concludes the proof.
\end{proof}

\section{Proof of Theorem~\ref{SORP}}\label{l,kmojnbhugvycfdxsza<zsexdcv}

For smooth, matrix-valued functions whose derivatives of every order are bounded, we consider the distance
\[
d(M,\widetilde M):=\sum_{k=0}^{+\infty}\frac{p_k(M-\widetilde M)}{2^k\big(1+p_k(M-\widetilde M)\big)},
\]
where
\[
p_k(M):=\max_{{{\alpha\in\N^N}\atop{|\alpha|\le k}}\atop{i,j\in\{1,\dots,N\}}}\|\partial^\alpha M_{i,j}\|_{L^\infty(\R^N, \R^{N\times N})}.
\]

The main technical ingredient towards the proof of Theorem~\ref{SORP} is given by the following observation.

\begin{proposition}\label{PROgsz}
We have that
\[
\lim_{x\to+\infty} x^{1+s}\PV\int_{\R}\frac{(z-x)\,z\,\tanh z}{|z-x|^{2+s}|z|^{2+s}} \,dz = -\int_\R\frac{z\,\tanh z}{|z|^{2+s}} \,dz.
\]
\end{proposition}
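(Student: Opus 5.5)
The plan is to split the real line into three regions adapted to the two singular points $z=0$ and $z=x$: the region near $0$ produces the limit, while the other two contribute $o(x^{-1-s})$. Set
\[
g(z):=\frac{z\tanh z}{|z|^{2+s}}=\frac{\tanh|z|}{|z|^{1+s}},
\]
which is even and nonnegative. It is integrable on $\R$, since $g(z)\le |z|^{-s}$ near $0$ and $g(z)\le |z|^{-1-s}$ at infinity. Moreover, $g$ is smooth away from the origin, with $|g'(z)|\le C|z|^{-2-s}$ for $|z|\ge1$; here we use that $\tanh'=\sech^2$ decays exponentially. For $x>2$, the integrand is $\frac{z-x}{|z-x|^{2+s}}\,g(z)$. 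Near $z=0$ it is bounded by $|z|^{-s}x^{-1-s}$, so no principal value is needed there. The principal value is only needed at $z=x$. We split the integral as follows:
\[
\PV\int_\R = \int_{\{|z|<x/2\}}+\PV\int_{\{|z-x|<x/2\}}+\int_{\{|z|\ge x/2,\ |z-x|\ge x/2\}}=:J_1+J_2+J_3.
\]

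\emph{Main term $J_1$.} For $|z|<x/2$, the mean value theorem applied to $t\mapsto t/|t|^{2+s}$ on the segment joining $-x$ and $z-x$ (which stays at distance $\ge x/2$ from $0$) gives
\[
\left|\frac{z-x}{|z-x|^{2+s}}+x^{-1-s}\right|\le C|z|x^{-2-s}.
\]
Hence
\[
\left|x^{1+s}J_1+\int_{\{|z|<x/2\}}g\right|\le \frac Cx\int_{\{|z|<x/2\}}|z|\,g(z)\,dz\le \frac Cx\int_{\{|z|<x/2\}}|z|^{-s}\,dz\le Cx^{-s}.
\]
Since $g\in L^1(\R)$, dominated convergence gives $\int_{\{|z|<x/2\}}g\to\int_\R g$. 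Therefore $x^{1+s}J_1\to-\int_\R g$, which is exactly the claimed right-hand side.

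\emph{Singular term $J_2$.} Because $t/|t|^{2+s}$ is odd, the principal value over $\{|z-x|<x/2\}$ of $\frac{z-x}{|z-x|^{2+s}}\,g(x)$ vanishes. We may therefore replace $g(z)$ by $g(z)-g(x)$. On $[x/2,3x/2]$ we have $|g(z)-g(x)|\le Cx^{-2-s}|z-x|$, so
\[
|J_2|\le Cx^{-2-s}\int_{\{|t|<x/2\}}|t|^{-s}\,dt\le Cx^{-1-2s},
\]
and consequently $x^{1+s}J_2=O(x^{-s})\to0$. This step is the main technical point. It is where the principal value matters, and it relies on the derivative bound for $g$ far from the origin; the exponential decay of $\sech^2$ makes the $\tanh'$ contribution harmless.

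\emph{Tail term $J_3$.} On this region the integrand is bounded by $|z-x|^{-1-s}|z|^{-1-s}$. Where $|z|\le 2x$, we bound $|z-x|^{-1-s}\le Cx^{-1-s}$ and integrate $|z|^{-1-s}$ over $\{|z|\ge x/2\}$, obtaining $O(x^{-1-2s})$. Where $|z|>2x$, we have $|z-x|\ge|z|/2$, and the integrand is bounded by $C|z|^{-2-2s}$, whose integral is $O(x^{-1-2s})$. Thus $x^{1+s}J_3\to0$.

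Collecting the three estimates, $x^{1+s}(J_1+J_2+J_3)\to-\int_\R \frac{z\tanh z}{|z|^{2+s}}\,dz$, which is the statement of Proposition~\ref{PROgsz}.
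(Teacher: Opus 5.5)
Your proposal is correct and follows essentially the same route as the paper. The paper splits the line into $[-x/2,x/2]$, $[x/2,3x/2]$ and the remaining tails. On the first piece it uses dominated convergence; on the singular piece it subtracts the value at $z=x$ and applies the mean value theorem; on the tails it obtains a $Cx^{-1-2s}$ bound. The only cosmetic differences are that you get a quantitative $O(x^{-s})$ error in the main term via the mean value theorem, and that you bound the tails directly rather than by rescaling $z=xw$.
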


The proof of Proposition~\ref{PROgsz} is somewhat delicate, due to the singularity at~$z=x$. For this reason, we separate the argument into three independent results (namely Lemmata~\ref{PROgsz1}, \ref{PROgsz2} and~\ref{PROgsz3}).

\begin{lemma}\label{PROgsz1}
We have that
\[
\lim_{x\to+\infty} x^{1+s}\int_{-x/2}^{x/2}\frac{(z-x)\,z\,\tanh z}{|z-x|^{2+s}|z|^{2+s}} \,dz = -\int_\R\frac{z\,\tanh z}{|z|^{2+s}} \,dz.
\]
\end{lemma}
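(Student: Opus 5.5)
On the interval $[-x/2,x/2]$ with $x$ large, the factor $(z-x)/|z-x|^{2+s}$ is smooth and close to $-x^{-1-s}$. The plan is to isolate this leading term and show that the remainder is negligible after multiplication by $x^{1+s}$. For $|z|\le x/2$ we have $z-x<0$, so
\[
\frac{(z-x)}{|z-x|^{2+s}}=-\frac{1}{(x-z)^{1+s}}.
\]
The quantity inside the limit can therefore be written as
\[
x^{1+s}\int_{-x/2}^{x/2}\frac{(z-x)\,z\tanh z}{|z-x|^{2+s}|z|^{2+s}}\,dz=-\int_{-x/2}^{x/2}\frac{z\tanh z}{|z|^{2+s}}\left(\frac{x}{x-z}\right)^{1+s}dz .
\]
No principal value is needed here. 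The function $g(z):=z\tanh z/|z|^{2+s}=\tanh z/(\operatorname{sgn}(z)\,|z|^{1+s})$ is even and nonnegative. Near $0$ it behaves like $|z|^{-s}$, since $\tanh z\sim z$, and this is integrable. For large $|z|$ it is bounded by $|z|^{-1-s}$, which is also integrable. Hence $g\in L^1(\R)$ and the right-hand side of the statement is finite.

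The main step is dominated convergence. I would set
\[
f_x(z):=g(z)\left(\frac{x}{x-z}\right)^{1+s}\chi_{[-x/2,x/2]}(z).
\]
For $|z|\le x/2$ we have $x-z\ge x/2$, so $(x/(x-z))^{1+s}\le 2^{1+s}$ and $|f_x|\le 2^{1+s}g\in L^1(\R)$. For each fixed $z$, as $x\to+\infty$ the indicator eventually equals $1$ and $x/(x-z)\to1$, so $f_x(z)\to g(z)$ pointwise. Dominated convergence then gives $\int f_x\to\int_\R g$, and the sign identity above yields the claimed limit $-\int_\R \frac{z\tanh z}{|z|^{2+s}}\,dz$.

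I do not expect a real obstacle here. The only points to check carefully are the integrability of $g$ at both $0$ and infinity, and the uniform bound on $(x/(x-z))^{1+s}$ on $[-x/2,x/2]$. The genuinely delicate part of Proposition~\ref{PROgsz}, namely the singularity at $z=x$ and the principal value there, lies outside this interval and is left to the subsequent lemmata.
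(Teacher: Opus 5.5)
Your proposal is correct and follows the paper's proof essentially verbatim. On $[-x/2,x/2]$ you rewrite the factor as $-x^{-1-s}(1-z/x)^{-1-s}$, dominate by $2^{1+s}\,\frac{z\tanh z}{|z|^{2+s}}\in L^1(\R)$, and conclude by dominated convergence, exactly as the paper does.
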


\begin{proof}
Let $x\in(0, +\infty)$ and set
\[
g(z):=\frac{z\,\tanh z}{|z|^{2+s}}.
\]
We first observe that $g\in L^1(\R)$.  Since $z-x<0$ for $z\in[-x/2,x/2]$, we have
\[
x^{1+s}\int_{-x/2}^{x/2}\frac{(z-x)\,z\,\tanh z}{|z-x|^{2+s}|z|^{2+s}} \,dz = -\int_{\R}\left(1-\frac{z}{x}\right)^{-1-s} g(z)\,\chi_{[-x/2,x/2]}(z) \,dz.
\]
We also recall that the function $(0,+\infty)\ni r\mapsto r^{-1-s}$ is decreasing and consequently
$$ \max_{z\in[-x/2,x/2]}\left(1-\frac{z}{x}\right)^{-1-s}=\max_{t\in[-1/2,1/2]}(1-t)^{-1-s} =\max_{r\in[1/2,3/2]}r^{-1-s}=2^{1+s}.$$
Moreover, for every $z\in\R$,
\[
\lim_{x\to+\infty}\left(1-\frac{z}{x}\right)^{-1-s}\chi_{[-x/2,x/2]}(z)=1.
\]
Hence, by the dominated convergence theorem, we conclude that
\[
\lim_{x\to+\infty} x^{1+s}\int_{-x/2}^{x/2}\frac{(z-x)\,z\,\tanh z}{|z-x|^{2+s}|z|^{2+s}}\, dz = -\int_\R g(z)\, dz,
\]
as desired.
\end{proof}

\begin{lemma}\label{PROgsz2}
We have that
\[
\lim_{x\to+\infty} x^{1+s} \PV\int_{x/2}^{3x/2}\frac{(z-x)\,z\,\tanh z}{|z-x|^{2+s}|z|^{2+s}} \,dz=0.
\]
\end{lemma}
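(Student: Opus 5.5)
The plan is to substitute $z=x+t$ with $t\in(-x/2,x/2)$ and handle the principal value by odd symmetry in $t$. After the substitution the integral becomes
\[
\PV\int_{-x/2}^{x/2}\frac{t}{|t|^{2+s}}\,G_x(t)\,dt,\qquad G_x(t):=\frac{(x+t)\tanh(x+t)}{|x+t|^{2+s}}.
\]
Since $t|t|^{-2-s}$ is odd and the interval is symmetric, we may subtract $G_x(0)$ and write the principal value as an absolutely convergent integral:
\[
\int_{-x/2}^{x/2}\frac{t}{|t|^{2+s}}\big(G_x(t)-G_x(0)\big)\,dt.
\]
This reduces everything to a Lipschitz bound on $G_x$ on $[-x/2,x/2]$.

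For $x$ large and $|t|\le x/2$ we have $x+t\ge x/2\ge1$, so $\tanh(x+t)$ is positive and smooth there. Thus $G_x(t)=\tanh(x+t)\,(x+t)^{-1-s}$. We differentiate:
\[
G_x'(t)=\sech^2(x+t)\,(x+t)^{-1-s}-(1+s)\tanh(x+t)\,(x+t)^{-2-s}.
\]
Using $\sech^2(u)\le 4e^{-2u}$ and $x+t\ge x/2$, this gives $|G_x'(t)|\le C x^{-2-s}$ uniformly for $|t|\le x/2$ (the exponential term is even smaller). The mean value theorem then yields $|G_x(t)-G_x(0)|\le Cx^{-2-s}|t|$.

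Plugging this in, the absolute value of the principal value is at most
\[
Cx^{-2-s}\int_{-x/2}^{x/2}|t|^{-s}\,dt=C'x^{-2-s}x^{1-s}=C'x^{-1-2s}.
\]
Multiplying by $x^{1+s}$ leaves $C'x^{-s}$, which tends to $0$ as $x\to+\infty$, as claimed.

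The only delicate point is justifying the passage from the symmetric excision $\R\setminus(x-\varepsilon,x+\varepsilon)$ to the subtracted form. This is routine: on each symmetric annulus $\{\varepsilon<|t|<x/2\}$ the term involving $G_x(0)$ integrates to zero exactly, and the remaining integrand is $O(|t|^{-s})$, hence integrable. So the $\varepsilon\searrow0$ limit exists and equals the absolutely convergent integral. I expect no real obstacle beyond obtaining the uniform derivative bound, which is elementary because $\tanh$ is essentially constant, equal to $1$, far from the origin.
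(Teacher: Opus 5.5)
Your proof is correct and follows essentially the same route as the paper: you subtract the value at $z=x$ using the vanishing principal value of the odd kernel, bound the derivative of $\tanh z/z^{1+s}$ by $Cx^{-2-s}$ on $[x/2,3x/2]$, and integrate $|z-x|^{-s}$ to obtain $O(x^{-1-2s})$. The only differences are cosmetic: you translate via $z=x+t$ and use $\sech^2 u\le 4e^{-2u}$ where the paper uses the boundedness of $z\sech^2 z$ for $z\ge1$.
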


\begin{proof}
Let $z>0$ and define
\[
q(z):=\frac{\tanh z}{z^{1+s}}.
\]
Since
\[
\PV\int_{x/2}^{3x/2}\frac{z-x}{|z-x|^{2+s}}\,dz=0,
\]
we obtain that
\[
\PV\int_{x/2}^{3x/2}\frac{(z-x)\,z\,\tanh z}{|z-x|^{2+s}|z|^{2+s}} \,dz = \int_{x/2}^{3x/2}\frac{z-x}{|z-x|^{2+s}}\left(q(z)-q(x)\right) \,dz.
\]

We now take $x\ge2$. For $z\in[x/2,3x/2]$, we have
\[
q'(z) = \frac{\sech^2 z}{z^{1+s}} - (1+s)\frac{\tanh z}{z^{2+s}}.
\]
Since the function $z\sech^2z$ is bounded for $z\ge1$, there exists $C>0$ such that
\[
|q'(z)|\le\frac{C}{x^{2+s}}\quad\mbox{ for every }z\in[x/2,3x/2].
\]
Therefore, by the mean value theorem,
\[
|q(z)-q(x)|\le\frac{C|z-x|}{x^{2+s}}.
\]
Consequently, we have
\[
\left|\PV\int_{x/2}^{3x/2}\frac{(z-x)\,z\,\tanh z}{|z-x|^{2+s}|z|^{2+s}}\, dz\right|\le\frac{C}{x^{2+s}}\int_{x/2}^{3x/2}\frac{dz}{|z-x|^s}\le\frac{C}{x^{1+2s}}.
\]
Hence,
\[
0\le\lim_{x\to+\infty} x^{1+s}\left|\PV\int_{x/2}^{3x/2}\frac{(z-x)\,z\,\tanh z}{|z-x|^{2+s}|z|^{2+s}}\, dz\right|\le C\lim_{x\to+\infty}x^{-s}=0,
\]
which proves the desired result.
\end{proof}

\begin{lemma}\label{PROgsz3}
We have that
\[
\lim_{x\to+\infty} x^{1+s}\int_{(-\infty,-x/2)\cup(3x/2,+\infty)}\frac{(z-x)\,z\,\tanh z}{|z-x|^{2+s}|z|^{2+s}}\, dz=0.
\]
\end{lemma}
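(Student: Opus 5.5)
Since there is no singularity in the region $(-\infty,-x/2)\cup(3x/2,+\infty)$, the plan is to bound the integrand crudely by its absolute value. We use $|\tanh z|\le1$ and show that the whole integral is $O(x^{-1-2s})$. Multiplying by $x^{1+s}$ then leaves $O(x^{-s})$, which tends to $0$.

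We take $x\ge 2$. The integrand is bounded in absolute value by
\[
\frac{|z|\,|\tanh z|}{|z-x|^{1+s}|z|^{2+s}}\le \frac{1}{|z-x|^{1+s}|z|^{1+s}}.
\]
We treat the two pieces of the region separately.

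On $(-\infty,-x/2)$ we have $|z-x| = x-z\ge x$, and also $|z-x|\ge |z|$. Hence
\[
\int_{-\infty}^{-x/2}\frac{dz}{|z-x|^{1+s}|z|^{1+s}}\le \frac{1}{x^{1+s}}\int_{x/2}^{+\infty}\frac{dt}{t^{1+s}}=\frac{2^s}{s\,x^{1+2s}}.
\]

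On $(3x/2,+\infty)$ we have $|z-x|=z-x\ge z/3$, so the integrand is at most $3^{1+s}z^{-2-2s}$. This gives
\[
\int_{3x/2}^{+\infty}3^{1+s}z^{-2-2s}\,dz\le \frac{C}{x^{1+2s}}.
\]

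Combining the two pieces,
\[
x^{1+s}\left|\int_{(-\infty,-x/2)\cup(3x/2,+\infty)}\frac{(z-x)\,z\,\tanh z}{|z-x|^{2+s}|z|^{2+s}}\,dz\right|\le \frac{C}{x^{s}}\to0
\]
as $x\to+\infty$. There is no real obstacle here: this is the one part of the decomposition in Proposition~\ref{PROgsz} where no principal value or cancellation is needed, and absolute integrability is immediate from the decay of order $|z|^{-2-2s}$ at infinity.
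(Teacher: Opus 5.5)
Your proposal is correct and follows essentially the paper's argument: bound the integrand by $|z-x|^{-1-s}|z|^{-1-s}$ using $|\tanh z|\le 1$, show the integral is $O(x^{-1-2s})$, and conclude after multiplying by $x^{1+s}$. The only difference is cosmetic: the paper gets $x^{-1-2s}$ in one step from the rescaling $z=xw$, while you estimate the two half-lines explicitly. Both of your explicit bounds are correct, and the restriction $x\ge 2$ is harmless but not needed.
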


\begin{proof}
Since $|\tanh z|\le1$, and setting $z:=xw$, we infer that
\begin{eqnarray*}&&
\left|\;\int_{(-\infty,-x/2)\cup (3x/2,+\infty)} \frac{(z-x)\,z\,\tanh z}{|z-x|^{2+s}\,|z|^{2+s}}\,dz\right|
\le \int_{(-\infty,-x/2)\cup (3x/2,+\infty)} \frac{dz}{|z-x|^{1+s}\,|z|^{1+s}}\\
&&\qquad=\frac{1}{x^{1+2s}}\int_{(-\infty,-1/2)\cup (3/2,+\infty)} \frac{ dw}{ |w-1|^{1+s}\,|w|^{1+s}}=
\frac{C}{x^{1+2s}},
\end{eqnarray*}for some positive constant~$C$. It follows that
\[
0\le\lim_{x\to+\infty} x^{1+s}\left|\,\int_{(-\infty,-x/2)\cup(3x/2,+\infty)}\frac{(z-x)\,z\,\tanh z}{|z-x|^{2+s}|z|^{2+s}}\, dz\right|\le C\lim_{x\to+\infty}x^{-s}=0,
\]
as desired.
\end{proof}

\begin{proof}[Proof of Proposition~\ref{PROgsz}]
The desired result plainly follows from Lemmata~\ref{PROgsz1}, \ref{PROgsz2} and~\ref{PROgsz3}.
\end{proof}

With this,  we can construct the counterexample advertised in Theorem~\ref{SORP}.
\begin{proof}[Proof of Theorem~\ref{SORP}]
Let $N=1$ and take $A(x):=1$.  By \cite[Theorem 1.2]{DSVPART1}, Open Problem~\ref{opeSPJMDPKM} has an affirmative answer for $A(x)=1$. For $n\ge2$,  we define
\[
A_n(x):=1-\frac{\tanh x}{n}.
\]
Since $|\tanh x|<1$, we have that, for every $x\in\R$,
\[
1-\frac1n<A_n(x)<1+\frac1n
.\]
In particular, for every $n\ge2$,
\[
\frac12\le A_n(x)\le\frac32
\]
Hence, all the coefficients $A_n$ are bounded and uniformly elliptic.

We also observe that
\begin{equation}\label{cjewr796598732WSDFGHvdt3478y9o}
\lim_{n\to+\infty}d(A_n,A)=0.
\end{equation}
Indeed, for every $k\in\N$,
\[
p_k(A_n-A) = \frac{p_k(\tanh)}{n}.
\]
Therefore,
\[
\lim_{n\to+\infty}\frac{p_k(A_n-A)}{2^k\big(1+p_k(A_n-A)\big)}=0.
\]
Moreover,
\[
0\le\frac{p_k(A_n-A)}{2^k (1+p_k(A_n-A))}\le\frac1{2^k}.
\]
Thus, the dominated convergence theorem for series yields~\eqref{cjewr796598732WSDFGHvdt3478y9o}.

We now assume by contradiction that that Open Problem~\ref{opeSPJMDPKM} has an affirmative answer for some $A_n$. Thus, by \eqref{CHARKE}, the corresponding kernel is given by
\[
k_{A_n}(x,y)=-\frac{d_{1,s}^2}{2}\PV\int_{\R}\frac{(z-x)(z-y)\left(1-\frac{\tanh z}{n}\right)}{|z-x|^{2+s}|z-y|^{2+s}} \,dz.
\]
In particular, for $x\in (0, +\infty)$,
\[
\frac{2k_{A_n}(x,0)}{d_{1,s}^2} = -\PV\int_{\R}\frac{(z-x)z}{|z-x|^{2+s}|z|^{2+s}} \,dz + \frac1n\PV\int_{\R}\frac{(z-x)z\tanh z}{|z-x|^{2+s}|z|^{2+s}} \,dz.
\]
By the change of variable~$w:=z/x$, we obtain that
\[ \lim_{x\to+\infty}
x^{1+s}\PV\int_{\R}\frac{(z-x)z}{|z-x|^{2+s}|z|^{2+s}} \,dz =
\lim_{x\to+\infty} x^{-s}\PV\int_{\R}\frac{(w-1)w}{|w-1|^{2+s}|w|^{2+s}} \,dw=0.
\]
Moreover, Proposition~\ref{PROgsz} gives
\[
\lim_{x\to+\infty} x^{1+s}\PV\int_{\R}\frac{(z-x)z\tanh z}{|z-x|^{2+s}|z|^{2+s}} \,dz = -\int_\R\frac{z\tanh z}{|z|^{2+s}} \,dz.
\]
Therefore,
\[
\lim_{x\to+\infty}\frac{2x^{1+s}k_{A_n}(x,0)}{d_{1,s}^2} = -\frac1n\int_\R\frac{z\tanh z}{|z|^{2+s}} \,dz.
\]

Since
\[
\frac{z\tanh z}{|z|^{2+s}}>0 \quad\mbox{ for every }z\ne0,
\]
and the latter function belongs to $L^1(\R)$, we have
\[
\int_\R\frac{z\tanh z}{|z|^{2+s}}\,dz>0.
\]
Accordingly, we have
\[
\lim_{x\to+\infty} x^{1+s}k_{A_n}(x,0)<0.
\]
Hence there exists $R_n>0$ such that
\[
k_{A_n}(x,0)<0\quad\mbox{for every }x\ge R_n.
\]
This contradicts the nonnegativity required in Open Problem~\ref{opeSPJMDPKM}. Therefore Open Problem~\ref{opeSPJMDPKM} has a negative answer for every $A_n$, with $n\ge2$. This concludes the proof.
\end{proof}

\appendix

\section{Proof of Corollary \ref{COINCIDONOCOST}}\label{mkjnbhgvfcdxrsezaesxrdctfvyguhoplòè'0o98765}

Throughout this appendix, $A\in\R^{N\times N}$ is a constant and symmetric matrix. Moreover, we introduce the following notations. For $0<\alpha<N$,  we set
\begin{equation}\label{lkojihugvyfctdxsza<zsxc}
\gamma_{N,\alpha}:=\frac{\displaystyle\Gamma\left(\frac{N-\alpha}{2}\right)}{\displaystyle 2^\alpha\pi^{N/2}\Gamma\left(\frac{\alpha}{2}\right)}.
\end{equation}
Since
\[
\Gamma\left(\frac{N+s+1}{2}\right) = \frac{N+s-1}{2}\Gamma\left(\frac{N+s-1}{2}\right),
\]
by \eqref{lkojihugvyfctdxsza<zsxc} and~\eqref{DNSDEFN} we deduce that
\begin{equation}\label{ojnhbgvfcdxszexrdctfvygbhn}
(N+s-1)\gamma_{N,1-s} = \frac{2^{s-1}(N+s-1)\Gamma\left(\frac{N+s-1}{2}\right)}{\pi^{N/2}\Gamma\left(\frac{1-s}{2}\right)} = \frac{2^s\Gamma\left(\frac{N+s+1}{2}\right)}{\pi^{N/2}\Gamma\left(\frac{1-s}{2}\right)} = d_{N,s}.
\end{equation}

We also recall that if~$0<\alpha, \beta<N$ with $\alpha+\beta<N$, then 
\begin{equation}\label{STEININTEGRAL}
\gamma_{N,\alpha}\gamma_{N,\beta}\int_{\R^N}|z-x|^{\alpha-N}|z-y|^{\beta-N} dz = \gamma_{N,\alpha+\beta} |x-y|^{\alpha+\beta-N},
\end{equation}
see e.g.~\cite[page 118, formula~(8)]{MR290095}.

We are now in the position to prove Corollary \ref{COINCIDONOCOST}.

\begin{proof}[Proof of Corollary \ref{COINCIDONOCOST}]
We distinguish three cases.
\smallskip

\noindent {\bf{Case 1}}: We first assume that $N+2s>2$ and we set
\[
F(x,y):=\int_{\R^N}|z-x|^{1-s-N}|z-y|^{1-s-N} dz.
\]
Formula \eqref{STEININTEGRAL}, here used with $\alpha=\beta=1-s$, gives that
\begin{equation}\label{,lòmkjnbhgvyfctdxrszexrdcfvgb}
F(x,y)=\frac{\gamma_{N,2-2s}}{\gamma_{N,1-s}^2}|x-y|^{2-N-2s}.
\end{equation}

Let now $x\ne y$ and take $\rho:=\rho_{x,y}\in(0,|x-y|/4)$. We claim that, if $\varphi$ is continuously differentiable in $B_\rho(x)$, then
\begin{equation}\label{,òlkmjnbhgvfctdxrszea<zesxdrcfvghbjnm}
\lim_{\varepsilon\searrow0}\int_{B_\rho(x)\setminus B_\varepsilon(x)}\frac{(z-x)_j\varphi(z)}{|z-x|^{N+s+1}}\, dz = \int_{B_\rho(x)}\frac{(z-x)_j (\varphi(z)-\varphi(x))}{|z-x|^{N+s+1}} \,dz.
\end{equation}
Indeed, by odd symmetry,
\begin{eqnarray*}
\int_{B_\rho(x)\setminus B_\varepsilon(x)}\frac{(z-x)_j\varphi(z)}{|z-x|^{N+s+1}}\, dz =\int_{B_\rho(x)\setminus B_\varepsilon(x)}\frac{(z-x)_j(\varphi(z)-\varphi(x))} {|z-x|^{N+s+1}} \,dz.
\end{eqnarray*}
Moreover, since $\varphi$ is continuously differentiable in~$B_\rho(x)$, there exists $C>0$ such that
\[
\frac{|(z-x)_j|\,|\varphi(z)-\varphi(x)|}{|z-x|^{N+s+1}}\le\frac{C}{|z-x|^{N+s-1}},
\]
which belongs to~$L^1(B_\rho(x))$. Hence~\eqref{,òlkmjnbhgvfctdxrszea<zesxdrcfvghbjnm} follows by the dominated convergence theorem. 

Using \eqref{,òlkmjnbhgvfctdxrszea<zesxdrcfvghbjnm} with $\varphi(z):=|z-y|^{1-s-N}$, we infer that, for $x\ne y$, 
\begin{eqnarray*}
\frac{\partial F}{\partial x_j}(x,y)&=&
(N+s-1)\lim_{\varepsilon\searrow0}\int_{\R^N\setminus B_\varepsilon(x)}\frac{(z-x)_j}{|z-x|^{N+s+1}}|z-y|^{1-s-N}\, dz.
\end{eqnarray*}
Differentiating with respect to $y_k$, we get
\[
\frac{\partial^2F}{\partial x_j\partial y_k}(x,y) = (N+s-1)^2\lim_{\delta\searrow0}\lim_{\varepsilon\searrow0}\int_{\R^N\setminus\left(B_\varepsilon(x)\cup B_\delta(y)\right)}\frac{(z-x)_j(z-y)_k}{|z-x|^{N+s+1}|z-y|^{N+s+1}} \,dz.
\]

We point out that the two singularities can be addressed independently.  Indeed, on $B_\rho(x)$ the map
\[
z\mapsto\frac{(z-y)_k}{|z-y|^{N+s+1}}
\]
is continuously differentiable. Moreover, by symmetry,
\begin{eqnarray*}&&
\int_{B_\rho(x)\setminus B_\varepsilon(x)}\frac{(z-x)_j(z-y)_k}{|z-x|^{N+s+1}|z-y|^{N+s+1}} \,dz \\&&\quad= \int_{B_\rho(x)\setminus B_\varepsilon(x)}\frac{(z-x)_j}{|z-x|^{N+s+1}}\left(\frac{(z-y)_k}{|z-y|^{N+s+1}}-\frac{(x-y)_k}{|x-y|^{N+s+1}}\right)\,dz,
\end{eqnarray*}
In addition,  there exists $C>0$ such that
\[
\left|\frac{(z-x)_j}{|z-x|^{N+s+1}}\right|\left|\frac{(z-y)_k}{|z-y|^{N+s+1}}-\frac{(x-y)_k}{|x-y|^{N+s+1}}\right|\le \frac{C}{|z-x|^{N+s-1}},
\]
which is integrable in a neighborhood of $x$. Thus, by the dominated convergence theorem, we get
\begin{eqnarray*}&&
\lim_{\varepsilon\searrow0}\int_{B_\rho(x)\setminus B_\varepsilon(x)}\frac{(z-x)_j(z-y)_k}{|z-x|^{N+s+1}|z-y|^{N+s+1}} \,dz \\&&\quad= \int_{B_\rho(x)}\frac{(z-x)_j}{|z-x|^{N+s+1}}\left(\frac{(z-y)_k}{|z-y|^{N+s+1}}-\frac{(x-y)_k}{|x-y|^{N+s+1}}\right) \,dz.
\end{eqnarray*}
The same holds in $B_\rho(y)$. Also, away from $B_\rho(x)\cup B_\rho(y)$ the integrand is absolutely integrable.
Accordingly, we can write
\[
\frac{\partial^2F}{\partial x_j\partial y_k}(x,y) = (N+s-1)^2\lim_{\varepsilon\searrow0}\int_{\R^N\setminus\left(B_\varepsilon(x)\cup B_\varepsilon(y)\right)}
\frac{(z-x)_j(z-y)_k}{|z-x|^{N+s+1}|z-y|^{N+s+1}}\, dz.
\]

Thus, recalling \eqref{INT2PVdefn} and~\eqref{,lòmkjnbhgvyfctdxrszexrdcfvgb}, we obtain that
\[
\begin{split}
&\sum_{j,k=1}^N A_{jk}\frac{\partial^2}{\partial x_j\partial y_k}\left(\frac{\gamma_{N,2-2s}}{\gamma_{N,1-s}^2}|x-y|^{2-N-2s}\right)\\
&\quad=
\sum_{j,k=1}^N A_{jk}\frac{\partial^2F}{\partial x_j\partial y_k}(x,y)\\ &\quad= (N+s-1)^2 \lim_{\varepsilon\searrow0}\int_{\R^N\setminus\left(B_\varepsilon(x)\cup B_\varepsilon(y)\right)}\frac{(z-x)\cdot A(z-y)}{|z-x|^{N+s+1}|z-y|^{N+s+1}} dz\\
&\quad=(N+s-1)^2I_A(x,y).
\end{split}\]

Now, we observe that
\[
\frac{\partial^2}{\partial x_j\partial y_k}|x-y|^{2-N-2s} = (N+2s-2)|x-y|^{-N-2s}\left(\delta_{jk} -(N+2s)\frac{(x_j-y_j)(x_k-y_k)}{|x-y|^2}\right),
\]
hence
\[
(N+s-1)^2I_A(x,y) = \frac{\gamma_{N,2-2s}(N+2s-2)}{\gamma_{N,1-s}^2}\frac{1}{|x-y|^{N+2s}}\left(\tr(A)-(N+2s)\frac{(x-y)\cdot A(x-y)}{|x-y|^2}\right).
\]
Moreover, since 
\[
\Gamma\left(\frac{N+2s}{2}\right) = \frac{N+2s-2}{2}\Gamma\left(\frac{N+2s-2}{2}\right),
\]
recalling the definition in \eqref{lkojihugvyfctdxsza<zsxc}, we get
\[
\gamma_{N,2-2s}(N+2s-2) = \frac{2^{2s-1}\Gamma\left(\frac{N+2s}{2}\right)}{\pi^{N/2}\Gamma(1-s)}.
\]
Combining the previous displays, using first \eqref{ojnhbgvfcdxszexrdctfvygbhn},  and then \eqref{,mknjbvcxzdxcfvbn}, we conclude that
\[
\begin{split}
I_A(x,y) &= \frac{2^{2s-1}\Gamma\left(\frac{N+2s}{2}\right)}{\pi^{N/2}\Gamma(1-s)d_{N,s}^2}\frac{1}{|x-y|^{N+2s}}\left(\tr(A)-(N+2s)\frac{(x-y)\cdot A(x-y)}{|x-y|^2}\right)\\
&=\frac{c_{N,s}}{2s\,d_{N,s}^2|x-y|^{N+2s}}\left(\tr(A)-(N+2s)\frac{(x-y)\cdot A(x-y)}{|x-y|^2}\right).
\end{split}
\]
Thus,  \eqref{PARTE1COR1.5} follows from \eqref{INT2PVdefn} and Theorem~\ref{thmreg2}.
\smallskip

\noindent {\bf{Case 2}}: We now address the case $N=1$ and $0<s<1/2$. We notice that in dimension one the argument used in the first case for $N+2s>2$ cannot be directly applied since
\[
F(x, y)=\int_{\R}\frac{dz}{|z-h|^s|z|^s}
\]
does not converge at infinity.

To overcome this difficulty, for $h\ne0$, we define
\[
\Phi_s(h):=\int_{\R}\left(\frac{1}{|z-h|^s|z|^s}-\frac{1}{|z|^{2s}}\right)\,dz.
\]
Indeed, for $|z|\ge2|h|$, there exists a constant $C>0$ depending only on $s$, such that
\[
\left|\frac{1}{|z-h|^s|z|^s} - \frac{1}{|z|^{2s}}\right|\le C\frac{|h|}{|z|^{1+2s}},
\] which is
now integrable at infinity.

Moreover, the change of variable $t:=z/|h|$ gives
\begin{equation}\label{567vhewiyt43683ertywdfeghwjRTYU}
\Phi_s(h)=|h|^{1-2s}\Phi_s(1)\quad\mbox{ for every }h\ne0.
\end{equation}
Hence we now focus on computing $\Phi_s(1)$.
We split the integral over the three intervals $(-R,0)$, $(0,1)$ and $(1,R)$. By using the changes of variables $t:=-z$ in $(-R,0)$ and
$t:=z-1$ in $(1,R)$, respectively, we obtain that
\[
\Phi_s(1) = \int_0^1\frac{dt}{t^s(1-t)^s} + \lim_{R\to+\infty}\left(\;\int_0^R\frac{dt}{t^s(1+t)^s} + \int_0^{R-1}\frac{dt}{t^s(1+t)^s} -2\int_0^R\frac{dt}{t^{2s}}
\right).
\]
Since
\[
\lim_{R\to+\infty}\int_{R-1}^R\frac{dt}{t^s(1+t)^s}=0,
\]
we can write
\[
\Phi_s(1) = \int_0^1\frac{dt}{t^s(1-t)^s} + 2\lim_{R\to+\infty}\left(\int_0^R\frac{dt}{t^s(1+t)^s}-\int_0^R\frac{dt}{t^{2s}}\right).
\]

Now, by \cite[8.380, n.1, page 917]{MR3307944}, we know that
\[
\int_0^1\frac{dt}{t^s(1-t)^s}=\frac{\Gamma(1-s)^2}{\Gamma(2-2s)}.
\]
Moreover,
\[
\lim_{R\to+\infty}\left(\;\int_0^R\frac{dt}{t^s(1+t)^s} -
\int_0^R\frac{dt}{t^{2s}}\right) = \int_0^{+\infty}\left(\frac{1}{t^s(1+t)^s}-\frac{1}{t^{2s}}\right)\,dt = \int_0^{+\infty} \left(\left(\frac{t}{1+t}\right)^s-1\right)\frac{dt}{t^{2s}}.
\]
Thus, integrating by parts and using \cite[8.380, n.1, page 917]{MR3307944}, we obtain that
\[
\int_0^{+\infty} \left(\left(\frac{t}{1+t}\right)^s-1\right) \frac{dt}{t^{2s}} = -\frac{s}{1-2s}\int_0^{+\infty} \frac{t^{-s}}{(1+t)^{s+1}} dt = -\frac{\Gamma(1-s)\Gamma(2s)}{(1-2s)\Gamma(s)}.
\]
Combining the previous displays, we get
\[
\Phi_s(1) = \frac{\Gamma(1-s)^2}{\Gamma(2-2s)}-\frac{2\Gamma(1-s)\Gamma(2s)}{(1-2s)\Gamma(s)}.
\]
By using \eqref{DNSDEFN}, \eqref{,mknjbvcxzdxcfvbn}, and the properties of the  Gamma function, we have
\[
\Phi_s(1)=-\frac{s\,c_{1,s}}{2(1-2s)d_{1,s}^2}.
\]
Thus, recalling~\eqref{567vhewiyt43683ertywdfeghwjRTYU}, we conclude that
\begin{equation}\label{lèkpugyftdresdxtfvgbhnj}
\Phi_s(h) = -\frac{s\,c_{1,s}}{2(1-2s)d_{1,s}^2}|h|^{1-2s}.
\end{equation}

We now return to \eqref{INT2PVdefn}. Since $N=1$, the matrix $A$ consists of only one entry, which we still denote by $A$. Moreover, $\tr(A)=A$ and $\frac{(x-y)\cdot A(x-y)}{|x-y|^2}=A$. Thus, in particular,
\begin{equation}\label{,kmjnhbgvfctdxrsza<zsxdcvbnm,mnhgfds}
\tr(A) - (1+2s)\frac{(x-y)\cdot A(x-y)}{|x-y|^2} = -2sA.
\end{equation}

For $R>|h|+1$ and $\varepsilon>0$,  we set
\[
I_{R,\varepsilon}(h):=\int_{[-R,R]\setminus((-\varepsilon,\varepsilon)\cup (h-\varepsilon,h+\varepsilon) )}
\frac{dz}{|z-h|^s|z|^s}.
\]
Differentiating with respect to $h$, we get
\begin{equation}\label{7378thewyit3846234rtdvtweyvft}
\frac{d}{dh}I_{R,\varepsilon}(h) = s\int_{[-R,R]\setminus((-\varepsilon,\varepsilon)\cup (h-\varepsilon,h+\varepsilon) )}\frac{z-h}{|z-h|^{s+2}|z|^s}\, dz + \frac{1}{\varepsilon^s}\left(\frac{1}{|h-\varepsilon|^s}-\frac{1}{|h+\varepsilon|^s}\right).
\end{equation}
Since $h\ne 0$, the last term in the right hand side converges to $0$ as $\varepsilon\searrow0$. 

We now consider the integral in the right hand side near the two singularities. Near $z=0$, the function
\[
z\mapsto\frac{z-h}{|z-h|^{s+2}}
\]
is bounded in a neighborhood of $0$. Hence, there exists $C>0$ such that
\[
\left|\frac{z-h}{|z-h|^{s+2}|z|^s}\right|\le\frac{C}{|z|^s},
\]
which is integrable near $0$. 

Near $z=h$, we write
\[
\int_{(h-\rho,h+\rho)\setminus (h-\varepsilon,h+\varepsilon)}\frac{z-h}{|z-h|^{s+2}|z|^s} \,dz = \int_{(h-\rho,h+\rho)\setminus (h-\varepsilon,h+\varepsilon)}\frac{z-h}{|z-h|^{s+2}}\left(\frac{1}{|z|^s}-\frac{1}{|h|^s}\right) \,dz.
\]
Moreover, since $z\mapsto |z|^{-s}$ is continuously differentiable near $h\ne 0$, we have
\[
\left|\frac{1}{|z|^s}-\frac{1}{|h|^s}\right|\le C|z-h|,
\]
and therefore
\[
\left|\frac{z-h}{|z-h|^{s+2}}\left(\frac{1}{|z|^s}-\frac{1}{|h|^s}\right)\right|\le\frac{C}{|z-h|^s},
\]
which is integrable near $h$. 

Finally, for $|z|\ge 2|h|$, we have $|z-h|\ge\ |z|/2$, and hence
\[
\left|\frac{z-h}{|z-h|^{s+2}|z|^s}\right|\le\frac{C}{|z|^{2s+1}},
\]
which is integrable at infinity. 

Gathering these observations, and letting first
$\varepsilon\searrow0$ and then $R\to+\infty$ in~\eqref{7378thewyit3846234rtdvtweyvft}, we obtain that
\[
\frac{d}{dh}\Phi_s(h)=\lim_{\varepsilon\searrow0}\int_{\R\setminus\left((-\varepsilon,\varepsilon)\cup (h-\varepsilon,h+\varepsilon)\right)}\frac{\partial}{\partial h}|z-h|^{-s}\,|z|^{-s}\, dz.
\]
Since
\[
\frac{\partial}{\partial h}|z-h|^{-s} = -\frac{\partial}{\partial z}|z-h|^{-s},
\]
we now integrate by parts with respect to $z$.  We first assume that $h>0$. In this case, let~$R>2h$ and $0<\varepsilon<h/2$. We integrate separately on $(-R,-\varepsilon)$, $(\varepsilon,h-\varepsilon)$, and $(h+\varepsilon,R)$. We notice that the boundary contributions at the two singular points cancel out, as
\[
\varepsilon^{-s}\left(|h-\varepsilon|^{-s}-|h+\varepsilon|^{-s}\right) + \varepsilon^{-s}\left(|h+\varepsilon|^{-s}-|h-\varepsilon|^{-s}\right)=0.
\]
Moreover, the contribution at $\pm R$ is given by
\[
R^{-s}\left((R+h)^{-s}-(R-h)^{-s}\right),
\]
and tends to zero as $R\to+\infty$.  Therefore, we have
\[
\frac{d}{dh}\Phi_s(h) = \lim_{\varepsilon\searrow0} \int_{\R\setminus\left((-\varepsilon,\varepsilon)\cup (h-\varepsilon,h+\varepsilon)\right)} |z-h|^{-s}\frac{d}{dz}|z|^{-s} dz.
\]
The same identity holds for $h<0$.

We now differentiate once more with respect to $h$. Let $h>0$ and $0<\varepsilon<h/2$.  Thus, 
\[
\begin{split}
&\frac{d}{dh}\int_{\R\setminus\left((-\varepsilon,\varepsilon)\cup (h-\varepsilon,h+\varepsilon)\right)}|z-h|^{-s}\frac{d}{dz}|z|^{-s} \,dz\\
&\quad=\int_{\R\setminus\left((-\varepsilon,\varepsilon)\cup (h-\varepsilon,h+\varepsilon)\right)}\frac{\partial}{\partial h}|z-h|^{-s}\frac{d}{dz}|z|^{-s} \,dz + \varepsilon^{-s}\left(\frac{d}{dz}|z|^{-s}\Bigg|_{z=h-\varepsilon} - \frac{d}{dz}|z|^{-s}\Bigg|_{z=h+\varepsilon}\right).
\end{split}
\]
Since $z\mapsto \frac{d}{dz}|z|^{-s}$ is continuously differentiable in a neighborhood of $h\ne 0$,  there exists $C_h>0$ such that
\[
\left|\frac{d}{dz}|z|^{-s}\bigg|_{z=h-\varepsilon} - \frac{d}{dz}|z|^{-s}\bigg|_{z=h+\varepsilon}\right|\le C_h\varepsilon.
\]
Thus, the boundary term is bounded by $C_h\varepsilon^{1-s}$ and tends to zero as $\varepsilon\searrow0$.

It remains to justify the convergence of the integral near $z=h$. Since $h\ne 0$, the function~$z\mapsto \frac{d}{dz}|z|^{-s}$ is continuously differentiable in a neighborhood of $h$. Hence, we have
\[
\frac{d}{dz}|z|^{-s} = \left(\frac{d}{dz}|z|^{-s} - \left.\frac{d}{dz}|z|^{-s}\right|_{z=h}\right) + \left.\frac{d}{dz}|z|^{-s}\right|_{z=h}.
\]
Therefore,
\[
\begin{split}
&\int_{B_\rho(h)\setminus B_\varepsilon(h)}\frac{\partial}{\partial h}|z-h|^{-s}\frac{d}{dz}|z|^{-s} \,dz\\
=\;&\int_{B_\rho(h)\setminus B_\varepsilon(h)}\frac{\partial}{\partial h}|z-h|^{-s}\left(\frac{d}{dz}|z|^{-s}-\left.\frac{d}{dz}|z|^{-s}\right|_{z=h}\right)dz + \left.\frac{d}{dz}|z|^{-s}\right|_{z=h}\int_{B_\rho(h)\setminus B_\varepsilon(h)}\frac{\partial}{\partial h}|z-h|^{-s} \,dz.
\end{split}
\]
We notice that the last integral vanishes by symmetry. Moreover, there exists $C_h>0$ such that
\[
\left|\frac{d}{dz}|z|^{-s}-\left.\frac{d}{dz}|z|^{-s}\right|_{z=h}\right|\le C_h|z-h|,
\]
and therefore
\[
\left|\frac{\partial}{\partial h}|z-h|^{-s}\left(\frac{d}{dz}|z|^{-s}-\left.\frac{d}{dz}|z|^{-s}\right|_{z=h}\right)\right|\le \frac{C_h}{|z-h|^s},
\]
and the latter function is integrable near $h$. Similarly, near $z=0$, the function $z\mapsto\frac{\partial}{\partial h}|z-h|^{-s}$ is continuously differentiable, and the singular odd part of $\frac{d}{dz}|z|^{-s}$ cancels out by symmetry. More precisely, we get
\[
\left|\frac{\partial}{\partial h}|z-h|^{-s}-\frac{\partial}{\partial h}|-h|^{-s}\right|\le C_h|z|,
\]
so the remaining integrand is bounded by $C_h/|z|^s$, which is integrable near $0$. Finally, for~$|z|\ge 2|h|$, we obtain that
\[
\left|\frac{\partial}{\partial h}|z-h|^{-s}\frac{d}{dz}|z|^{-s}\right|\le\frac{C}{|z|^{2s+1}},
\]
which is integrable at infinity. Therefore,
\[
\frac{d^2}{dh^2}\Phi_s(h)=\lim_{\varepsilon\searrow0}\int_{\mathbb{R}\setminus(B_\varepsilon(0)\cup B_\varepsilon(h))}\frac{\partial}{\partial h}|z-h|^{-s}\frac{d}{dz}|z|^{-s}\, dz.
\]
Since
\[
\frac{\partial}{\partial h}|z-h|^{-s} = s\frac{z-h}{|z-h|^{s+2}}\quad\mbox{ and }\quad\frac{d}{dz}|z|^{-s}=-s\frac{z}{|z|^{s+2}},
\]
we conclude that
\[
\frac{d^2}{dh^2}\Phi_s(h) = -s^2\lim_{\varepsilon\searrow0}\int_{\R\setminus\left((-\varepsilon,\varepsilon)\cup (h-\varepsilon,h+\varepsilon)\right)}\frac{(z-h)z}{|z-h|^{s+2}|z|^{s+2}}\, dz.
\]
Since we set $h=x-y$, we get
\[
\frac{\partial^2}{\partial x\partial y}\Phi_s(x-y) = -\frac{d^2}{dh^2}\Phi_s(h),
\]
and hence, after translating $z\mapsto z-y$, we obtain that
\begin{equation}\label{lkjohgfdtrxsd}
A\frac{\partial^2}{\partial x\partial y}\Phi_s(x-y) = s^2 \lim_{\varepsilon\searrow0}A\int_{\R\setminus\left((x-\varepsilon,x+\varepsilon)\cup (y-\varepsilon,y+\varepsilon)\right)}\frac{(z-x)(z-y)}{|z-x|^{s+2}|z-y|^{s+2}} \,dz = s^2I_A(x,y).
\end{equation}
Now, by \eqref{lèkpugyftdresdxtfvgbhnj}, and since
\[
\frac{\partial^2}{\partial x\partial y}|x-y|^{1-2s} = 2s(1-2s)|x-y|^{-1-2s},
\]
it follows that
\begin{equation}\label{lkjohgfdtrxsd2}
\frac{\partial^2}{\partial x\partial y}\Phi_s(x-y) = -\frac{s\,c_{1,s}}{2(1-2s)d_{1,s}^2}\,2s(1-2s)|x-y|^{-1-2s} = -\frac{s^2c_{1,s}}{d_{1,s}^2}|x-y|^{-1-2s}.
\end{equation}
Thus, combining \eqref{lkjohgfdtrxsd}, \eqref{lkjohgfdtrxsd2} and \eqref{,kmjnhbgvfctdxrsza<zsxdcvbnm,mnhgfds}, we get
\[
I_A(x,y) = -\frac{c_{1,s}}{d_{1,s}^2}\frac{A}{|x-y|^{1+2s}} =\frac{c_{1,s}}{2s\,d_{1,s}^2|x-y|^{1+2s}}\left(\tr(A)-(1+2s)\frac{(x-y)\cdot A(x-y)}{|x-y|^2}\right).
\]
Thus,  \eqref{PARTE1COR1.5} follows by \eqref{INT2PVdefn}, and Theorem \ref{thmreg2}.
\smallskip

\noindent {\bf{Case 3}}:
We finally consider the case $N=1$ and $s=1/2$.
We set $h:=x-y$. By using the change of variable~$z:=y+ht$, 
we get
\begin{equation}\label{lòkmjnbhgvcfdxszxdcfvgb}
\begin{split}
I_A(x,y)&= A\lim_{\varepsilon\searrow0}\int_{\R\setminus\left(
(x-\varepsilon, x+\varepsilon)\cup (y-\varepsilon, y+\varepsilon)\right)}\frac{(z-x)(z-y)}
{|z-x|^{5/2}|z-y|^{5/2}}\, dz\\
&= \frac{A}{|h|^2}\lim_{\varepsilon\searrow0}\int_{\R\setminus\left(
(-\varepsilon/|h|, \varepsilon/|h|)\cup (1- \varepsilon/|h|,1+\varepsilon/|h|)
\right)}\frac{t(t-1)}{|t|^{5/2}|t-1|^{5/2}} \,dt.
\end{split}
\end{equation}
Since $h\ne0$, by setting $\delta:=\varepsilon/|h|$, we get
\[
I_A(x,y)=\frac{A}{|x-y|^2}\lim_{\delta\searrow0}\int_{\R\setminus\left((-\delta, \delta) \cup (1-\delta, 1+\delta)\right)}\frac{t(t-1)}{|t|^{5/2}|t-1|^{5/2}}\, dt.
\]
For $\delta\in(0,1/2)$, we split the integral over the three intervals $(-\infty,-\delta)$, $(\delta,1-\delta)$ and $(1+\delta,+\infty)$. This gives
\[
\int_{\R\setminus\left((-\delta, \delta) \cup (1-\delta, 1+\delta)
\right)}\frac{t(t-1)}{|t|^{5/2}|t-1|^{5/2}}\, dt = \int_{-\infty}^{-\delta}\frac{dt}{|t|^{3/2}|t-1|^{3/2}}-\int_\delta^{1-\delta}\frac{dt}{t^{3/2}(1-t)^{3/2}} + \int_{1+\delta}^{+\infty}\frac{dt}{t^{3/2}(t-1)^{3/2}}.
\]
We notice that the changes of variables $t:=-u$ in the first integral and $t:=1+u$ in the third one, respectively, make the two outer integrals coincide. Hence,
\[
\int_{\R\setminus\left((-\delta, \delta) \cup (1-\delta, 1+\delta)\right)}\frac{t(t-1)}{|t|^{5/2}|t-1|^{5/2}}\, dt = 2\int_\delta^{+\infty}\frac{du}{u^{3/2}(1+u)^{3/2}} -\int_\delta^{1-\delta}\frac{du}{u^{3/2}(1-u)^{3/2}}.
\]
Furthermore,  the two integrals at the right hand side be computed explicitly. Indeed, we have
\[
\int_\delta^{+\infty}\frac{du}{u^{3/2}(1+u)^{3/2}} = -4+ \frac{2(2\delta+1)}{\sqrt{\delta(1+\delta)}}\quad\mbox{ and }\quad\int_\delta^{1-\delta}
\frac{du}{u^{3/2}(1-u)^{3/2}} = \frac{4(1-2\delta)}{\sqrt{\delta(1-\delta)}}.
\]
Thus, it follows that
\[
\int_{\R\setminus\left((-\delta, \delta) \cup (1-\delta, 1+\delta)\right)}\frac{t(t-1)}{|t|^{5/2}|t-1|^{5/2}} \,dt = -8+\frac{4}{\sqrt{\delta}}\left(\frac{1+2\delta}{\sqrt{1+\delta}}-\frac{1-2\delta}{\sqrt{1-\delta}}\right).
\]
Accordingly, we have
\[
\lim_{\delta\searrow0}\int_{\R\setminus\left((-\delta, \delta) \cup (1-\delta, 1+\delta)\right)}\frac{t(t-1)}{|t|^{5/2}|t-1|^{5/2}} \,dt=-8.
\]
By using this information in \eqref{lòkmjnbhgvcfdxszxdcfvgb}, we conclude that
\[
I_A(x,y)
=
-\frac{8A}{|x-y|^2}.
\]
Now, from \eqref{DNSDEFN} and \eqref{,mknjbvcxzdxcfvbn}, we get
\[
c_{1,1/2}=-\frac{2\Gamma(1)}{\sqrt{\pi}\Gamma(-1/2)}=\frac1\pi\quad\mbox{ and }\quad  d_{1,1/2}=\frac{2^{1/2}\Gamma(5/4)}{\sqrt{\pi}\Gamma(1/4)} = \frac{1}{2\sqrt{2\pi}}.
\]
Combining these information with \eqref{,kmjnhbgvfctdxrsza<zsxdcvbnm,mnhgfds}, we obtain that
\[
I_A(x,y) = \frac{c_{1,1/2}}{d_{1,1/2}^2|x-y|^2}\left(\tr(A)-2\frac{(x-y)\cdot A(x-y)}{|x-y|^2}\right) = -\frac{8A}{|x-y|^2}.
\]
Thus,  \eqref{PARTE1COR1.5} follows by \eqref{INT2PVdefn}, and Theorem \ref{thmreg2}. This concludes the proof.
\end{proof}

\section*{Acknowledgments}
The authors thank Daniel E. Spector for the helpful discussions and insights, and for bringing reference \cite{LoRodBOH} to their attention.

The authors are members of the Australian Mathematical Society (AustMS). CS
is member of INdAM-GNAMPA.

This work has been supported by the Australian Laureate Fellowship FL190100081 and
the Australian Future Fellowship FT230100333.

CS acknowledges the support of the Juan de la Cierva Fellowship (grant number JDC2023-050365-I).

CS gratefully acknowledges the support of the Australian Laureate Fellowship FL190100081 for her research stay at UWA, where part of this research was carried out.

{\small \def\cdprime{$''$}
}

\end{document}